\newtheorem{thm}{Theorem}[section]
\newtheorem{cor}[thm]{Corollary}
\newtheorem{lem}[thm]{Lemma}
\newtheorem{prop}[thm]{Proposition}
\newtheorem{definition}{Definition}
\newtheorem{defn}[thm]{Definition}
\newtheorem{defi}[thm]{Definition}
\newtheorem{nota}[thm]{Notation}
\theoremstyle{definition}
\newtheorem{rem}[thm]{Remark}
\newcommand{\R}{{\mathbb{R}}}
\newcommand{\E}{{\mathbb{E}}}
\newcommand{\Q}{{\mathbb{Q}}}
\newcommand{\C}{{\mathbb{C}}}
\newcommand{\Z}{{\mathbb{Z}}}
\newcommand{\N}{{\mathbb{N}}}
\newcommand{\T}{{\mathbb{T}}}
\newcommand{\PP}{{\mathbb{P}}}
\newcommand{\Cal}{\mathcal}
\newcommand{\cH}{\mathcal{H}}
\def \diag {{\rm diag} \,}
\def\e{{\rm e}}
\def\tn {{\underline n}}  \def\tp {{\underline p}}
\def\j{{\underline j}} \def\k{{\underline k}}
 \def\t{{ \mathbf t}}
 \def\el{{\underline \ell}}
\def\t{{\underline t}} \def\n {{\underline n}}
\def\0{{\underline 0}} \def\m {{\underline m}}
\def\a {{\underline a}} 
\def\u{{\underline u}}
\def\stm0{{\setminus \{\0\}}}
\def\eop{\qed}
\begin{document}
\baselineskip 15pt
\parindent=0mm

\title[CLT for commutative
semigroups of toral endomorphisms] {Central limit theorem for
commutative \\ semigroups of toral endomorphisms}

\bigskip
\date{}
\author{Guy Cohen and Jean-Pierre Conze}
\address{Guy Cohen, \hfill \break Dept. of Electrical Engineering,
\hfill \break Ben-Gurion University, Israel}
\email{guycohen@ee.bgu.ac.il}
\address{Jean-Pierre Conze,
\hfill \break IRMAR, CNRS UMR 6625, \hfill \break University of
Rennes I, Campus de Beaulieu, 35042 Rennes Cedex, France}
\email{conze@univ-rennes1.fr}

\subjclass[2010]{Primary: 60F05, 28D05, 22D40; Secondary: 47A35,
47B15} \keywords{Central Limit Theorem, $\Z^d$-action, semigroup of
endomorphisms, toral automorphisms, powers of barycenters, rotated
process, moments and mixing, $S$-units}

\begin{abstract} Let $\Cal S$ be an abelian finitely generated semigroup
of endomorphisms of a probability space $(\Omega, {\Cal A}, \mu)$,
with $(T_1, ..., T_d)$ a system of generators in ${\Cal S}$. Given
an increasing sequence of domains $(D_n) \subset \N^d$, a question
is the convergence in distribution of the normalized sequence
$|D_n|^{-\frac12} \sum_{{\k} \, \in D_n} \, f \circ T^{\,{\k}}$, or
normalized sequences of iterates of barycenters $Pf = \sum_j p_j f
\circ T_j$, where $T^{\k}= T_1^{k_1} ... T_d^{k_d}$, ${\k}= (k_1,
..., k_d) \in {\N}^d$.

After a preliminary spectral study when the action of $\Cal S$ has a
Lebesgue spectrum, we consider totally ergodic $d$-dimensional
actions given by commuting endomorphisms on a compact abelian
connected group $G$ and we show a CLT, when $f$ is regular on $G$.
When $G$ is the torus, a criterion of non-degeneracy of the variance
is given.
\end{abstract}

\maketitle

\tableofcontents

\section*{\bf Introduction}

Let $\Cal S$ be an abelian finitely generated semigroup of
endomorphisms of a probability space $(\Omega, {\Cal A}, \mu)$. Each
$T \in \Cal S$ is a measurable map from $\Omega$ to $\Omega$
preserving the probability measure $\mu$. For $f \in L^1(\mu)$ a
random field is defined by $(f(T.)_{T \in {\Cal S}})$ for which
limit theorems can be investigated: law of large numbers, behavior
in distribution.

By choosing a system $(T_1, ..., T_d)$ of generators in ${\Cal S}$,
every $T \in {\Cal S}$ can be represented\footnote{We underline the
elements of $\N^d$ or $\Z^d$ to distinguish them from the scalars
and write $T^\k f$ for $f \circ T^\k$.} as $T = T^{\k}= T_1^{k_1}
... T_d^{k_d}$, for ${\k}= (k_1, ..., k_d) \in {\N}^d$. Given an
increasing sequence of domains $(D_n) \subset \N^d$, a question is
the asymptotic normality of the standard normalized sequence and the
``multidimensional periodogram" respectively defined by
\begin{eqnarray}
|D_n|^{-\frac12} \sum_{{\k} \, \in D_n} \, T^{\,{\k}} f, \ \
|D_n|^{-\frac12} \sum_{\k \, \in D_n} \, e^{2\pi i \langle \k,
\theta \rangle} \, T^{{\k}} f, \ f \in L^2_0(\mu), \ \theta \in
\R^d. \label{ergSumDn}
\end{eqnarray}

Let us take for $(\Omega, {\Cal A}, \mu)$ a compact abelian group
$G$ endowed with its Borel $\sigma$-algebra ${\Cal A}$ and its Haar
measure $\mu$. In this framework, the first examples of dynamical
systems satisfying a CLT in a class of regular functions are due to
R.~Fortet and M.~Kac for endomorphisms of $\T^1$. In 1960 V. Leonov
(\cite{Leo60b}) showed that, if $T$ is an ergodic endomorphism of
$G$, then the CLT is satisfied for regular functions $f$ on $G$.

The $d$-dimensional extension of this situation leads to the
question of  validity of a CLT for algebraic actions on an abelian
compact group $G$, i.e., when $T^{\k}$ in Formula (\ref{ergSumDn})
is given by an action of $\N^d$ on $G$ by automorphisms or more
generally endomorphisms.

By composition, one obtains an action by isometries on ${\Cal H} =
L^2_0(\mu)$, the space of square integrable functions $f$ such that
$\mu(f) =0$. The spectral analysis of this action is the content of
Section \ref{specAnaly} where the methods of summation are also
discussed.

In Section \ref{endoGroup} we consider $d$-dimensional actions given
by commuting endomorphisms on a connected abelian compact group $G$.
For a regular function $f$ on $G$, a CLT is shown for the above
normalized sequence (Theorem \ref{tclPol0A}) and other summation
methods like barycenters (Theorem \ref{tclBaryc}), as well as a
criterion of non-degeneracy of the variance when $G$ is a torus. The
barycenters yield a class of operators with a polynomial decay to
zero of the iterates applied to regular functions. This contrasts
with the spectral gap property for non amenable group actions by
automorphisms on tori.

When $(D_n)$ is a sequence of $d$-dimensional cubes, for the
periodogram in (\ref{ergSumDn}), given a function $f$ in $L_0^2(G)$,
the CLT is obtained for almost every $\theta$, without regularity
requirement.

When $G$ is a torus, using the exponential decay of correlation, the
CLT can be shown for a class of functions with weak regularity and
one can characterize the case of degeneracy in the limit theorem. In
an appendix, classical results on the construction of $\Z^d$-actions
by automorphisms are recalled.

One of our aims was to extend to a larger class of semigroups of
actions by endomorphisms the CLT proved by T. Fukuyama and B. Petit
(\cite{FuPe01}) for semigroups generated by coprime integers on the
circle. Their result corresponds, in our framework, to sums taken on
an increasing sequence of triangles in $\N^2$.

After completion of a first version of this paper, we were informed
by B. Weiss of the recent paper by M. Levine (\cite{Lev13}) in which
the CLT and a functional version of it are obtained for actions by
endomorphisms on the torus. The proof of the CLT for sums on
$d$-dimensional ``rectangles" is based in both approaches, as well
as in \cite{FuPe01}, on results on $S$-units. In the present paper
we use the formalism of cumulants and the result of Schmidt and Ward
(\cite{SchWar93}) on mixing of all orders for connected groups
deduced from a deep result on $S$-units. We make use of the spectral
measure which is well adapted to a ``quenched" CLT and a CLT along
different types of summation sequences, in particular the iterates
of barycenters. The connectedness of the group $G$ is assumed only
for the CLT.

\vskip 3mm
\section{\bf Spectral analysis} \label{specAnaly}

In this section we consider the general framework of the action of
an abelian finitely generated semigroup ${\Cal S}$ of isometries on
a Hilbert space ${\Cal H}$. We have in mind the example of a
semigroup $\Cal S$ of endomorphisms of a compact abelian group $G$
acting on ${\Cal H} = L_0^2(G, \mu)$, with $\mu$ the Haar measure of
$G$.

With the notations of the introduction, every $T \in {\Cal S}$ is
represented as $T = T^{\el} = T_1^{\ell_1} ... T_d^{\ell_d}$, where
$(T_1, ..., T_d)$ is a system of generators in ${\Cal S}$ and
${\el}= (\ell_1, ..., \ell_d) \in \N^d$.

Given $f \in \Cal H$, for $d > 1$, there are various choices of the
sets of summation $D_n$ for the field $(T^{\el} f, \el \in \N^d)$.
We discuss this point, as well as the behavior of the associated (by
discrete Fourier transform) kernels. The second subsection is
devoted to the spectral analysis of the $d$-dimensional action.

\subsection{\bf Summation and kernels, barycenters}
\label{LebSpec}

\

If $(D_{n})_{n \geq 1}$ is  a sequence of subsets of $\N^d$, the
corresponding rotated sum and kernel are respectively: $\sum_{{\el}
\in D_n} \e^{2\pi i\langle {\el },\, \theta \rangle}T^\el f$ and
$\frac1{|D_n|}\big|\sum_{{\el} \in D_n} \e^{2\pi i\langle {\el },\,
{t}\rangle}\big|^2$. The simplest choice for $(D_n)$ is an
increasing family of $d$-dimensional squares or rectangles.

\begin{nota} \label{sumSeq} {\rm
More generally, we will call {\it summation sequence} a uniformly bounded sequence
$(R_n)$ of functions from $\N^d$ to $\R^+$. It could be also defined
on $\Z^d$, but for simplicity in this section we consider summation
for $\el \in \N^d$. If $T = (T^\el)_{\el \in \N^d}$ is a semigroup of isometries, an
associated sequence of operators on $\Cal H$ can be defined by
$$R_n(T) : f \in {\Cal H} \to R_n(T)f:= \sum_{\el \in \N^d} R_n(\el) T^\el f.$$

We will write simply $R_n$ instead of $R_n(T)$. By introducing a
rotation term, these operators extend to a family of operators
$R_n^\theta$, for $\theta \in \R^d$,
$$f \to R_n^\theta f:= \sum_{\el \in \N^d} R_n(\el) \, e^{2\pi i
\langle \el, \theta \rangle} \, T^\el f.$$ We have $\|\sum_{\el \in
\N^d} R_n(\el) e^{2\pi i \langle \el, . \rangle}\|_{L^2(\T^d,dt)}^2
= \sum_{\el \in \N^d} |R_n(\el)|^2$. Taking the discrete Fourier
transform, we associate to $R_n$ the normalized ``kernel" $\tilde
R_n$ defined on $\T^d$ by:
$$\tilde R_n(t) = {|\sum_{\el \in \N^d} R_n(\el)  e^{2\pi i \langle \el,
t \rangle}|^2 \over \sum_{\el \in \N^d} |R_n(\el)|^2}.$$  }
\end{nota}

\begin{definition} {\rm
We say that $(R_n)$ is {\it regular} if $(\tilde R_n)_{n \geq 1}$
weakly converges to a measure $\zeta$ on $\T^d$, i.e., $\int_{\T^d}
\tilde R_n \,\varphi \, dt \underset{n \to \infty}
{\longrightarrow}\int_{\T^d} \varphi \, d\zeta$ for every continuous
function $\varphi$ on $\T^d$. If $(R_n)$ is regular and $\zeta$ is
the Dirac mass at 0, we say that $(R_n)$ is a {\it F\o{}lner
sequence}.}
\end{definition}
If  $(R_n) = (1_{D_n})$ is associated to a sequence of sets $D_n
\subset \N^d$, one easily proves that $(R_n)$ is a F\o{}lner
sequence if and only if $(D_n)$ satisfies the F\o{}lner condition:
\begin{eqnarray}
\lim_{n\to\infty} |D_n|^{-1} |(D_n+{\tp}) \, \cap \, D_n| =1, \
\forall {\tp}\in \Z^d. \label{FolnCond}
\end{eqnarray}

\vskip 2mm {\bf Examples.} \ a) {\it Squares and rectangles.}  \
Using the usual one-dimensional Fej\'er kernel $K_N(t) = {1\over N}
({\sin \pi N t \over \sin \pi t })^2$, the $d$-dimensional Fej\'er
kernels on $\T^d$ corresponding to rectangles are defined by
$K_{N_1,...,N_d}(t_1,..., t_d) = K_{N_1}(t_1)\cdots K_{N_d}(t_d), \
{\underline N} =(N_1, ..., N_d) \in\mathbb N^d$. They are the
kernels associated to $D_{\underline N} := \{{ \k}\in\mathbb N^d:\ {
k}_i\le N_i, 1 \le i\le d \}.$

b) A family of examples satisfying (\ref{FolnCond}) can be obtained as
follows: take a non-empty domain $D \subset \R^d$ with {\it smooth}
boundary and finite area and put $D_n=\lambda_n D\cap \Z^d$, where
$(\lambda_n)$ is an increasing sequence of real numbers tending to
$+\infty$.

c) {\it Kernels with unbounded gaps} \ If $(D_\n)$ is a (non
F\o{}lner) sequence of domains such that $\lim_\n
\frac{|(D_\n+{\tp}) \cap D_\n|} {|D_\n|}=0$ for ${\tp}\not={{0}}$,
then $\lim_\n(\tilde R_\n*\varphi)(\theta)=\int_{\mathbb T^d}
\varphi(t)dt$, for every ${\theta}\in\mathbb T^d$ and $\varphi$
continuous, where $(\tilde R_\n)$ is the kernel associated to
$(D_n)$.

For example, let $k_j$ be a sequence with
$k_{j+1}-k_j\to\infty$ and put $D_n=\{k_j:\ 0\le j\le n-1\}$. For
$p\not=0$ the number of solutions of $k_j-k_\ell=p$, for
$j,\ell\ge0$ is finite, so that $\lim_{n\to\infty}\frac{|(D_n+{
p})\, \cap \, D_n|}{|D_n|}=0$ for $p\not=0$.

d) {\it Iteration of barycenter operators} \ Let $T_1, ..., T_d$ be $d$ commuting
unitary operators on a Hilbert space ${\Cal H}$. If $(p_1, ..., p_d)$ is a probability
vector such that $p_j > 0, \forall j$, for $\theta = (\theta_1,\theta_2,
..., \theta_d) \in \T^d$, we will consider the barycenter operators
defined on $\Cal H$ by
\begin{eqnarray}
P: f \to \sum_{j=1}^d  \, p_j \, T_j f, \  \ P_\theta: f \to
\sum_{j=1}^d \, p_j \, e^{2\pi i \theta_j} \, T_j f. \label{defBary}
\end{eqnarray}
The iteration of $P$ or $P_\theta$ gives a method of summation which
is not of F\o{}lner type.

\subsection{\bf Lebesgue spectrum, variance} \label{orthogIncr}

\

Let $\Cal S$ be a finitely generated {\it torsion free} commutative
group of unitary operators on a Hilbert space $\Cal H$. Let $(T_1,
..., T_d)$ be a system of independent generators in ${\Cal S}$. Each
element of $\Cal S$ can be written in a unique way as $T^{\el}=
T_1^{\ell_1} ... T_d^{\ell_d}$, with ${\el}= (\ell_1, ..., \ell_d)
\in \Z^d$, and $\el \to T^\el$ defines a unitary representation of
$\Z^d$ in $\Cal H$.

 For every $f \in \cH$, there is a positive finite measure $\nu_f$ on $\T^d$
such that, for every $\el \in \Z^d$, $\hat \nu_f({\el}) = \langle
T^{\ell_1}... T^{\ell_d} f, f \rangle$.
\begin{defi} \label{mixLeb} {\rm Recall that the action of $\Cal S$ on ${\Cal H}$
has a {\it Lebesgue spectrum}, if there exists ${\Cal K}_0$, a
closed subspace of ${\Cal H}$, such that the subspaces $T^\el {\Cal
K}_0$ are pairwise orthogonal and span a dense subspace in  ${\Cal
H}$.}\end{defi} The Lebesgue spectrum property implies mixing, i.e.,
$\lim_{\|\n\| \to \infty}|\langle T^\n f, g\rangle| = 0, \ \forall
f, g \in \Cal H$. With the Lebesgue spectrum property, for every $f
\in \Cal H$, the corresponding spectral measure $\nu_f$ of $f$ on
$\T^d$ has a density $\varphi_f$. A change of basis induces for the
spectral density the composition by an automorphism acting on
$\T^d$.

A family of examples of $\Z^d$-actions by unitary operators is
provided by the action of a group of commuting automorphisms on a
compact abelian group $G$. In the present paper, we will focus
mainly on this class of examples.

\begin{nota} \label{gammaj2} {\rm For any orthonormal basis $(\psi_j)_{j \in J}$
of ${\Cal K}_0$, the family $(T^\el \psi_j)_{j \in J, \, \el \in
\Z^d}$ is an orthonormal basis of ${\Cal H}$. Let $\cH_j$ be the
closed subspace (invariant by the $\Z^d$-action) generated by $(T^\n
\psi_j)_{\n \in \Z^d}$.

We set $a_{j, \, \tn}:= \langle f, T^{\tn} \psi_j \rangle$, $j \,
\in J$. Let $f_j$ be the orthogonal projection of $f$ on $\cH_j$ and
$\gamma_j$ an everywhere finite square integrable function on $\T^d$
with Fourier coefficients $a_{j, \, \n}$.

The spectral measure of $f$ is the sum of the spectral measures of
$f_j$. For $f_j$, the density of the spectral measure is
$|\gamma_j|^2$. Therefore, by orthogonality of the subspaces
$\cH_j$, the density of the spectral measure of $f$ is $\varphi_f(t)
= \sum_{j \in J} |\gamma_j(t)|^2$.}\end{nota}

We have: $\int_{\T^d} \sum_{j \in J} |\gamma_j(\theta)|^2 \ d\theta
= \sum_{j \, \in J} \sum_{\tn\in\mathbb Z^d} |a_{j, \tn}|^2 =
\int_{\T^d} \varphi_f(\theta) \ d\theta =\|f\|^2 < \infty$ and the
set $\Lambda_0(f):= \{\theta \in \T^d: \sum_{j \, \in J}
|\gamma_j(\theta)|^2 < \infty\}$ has full measure.

For $\theta$ in $\T^d$, let $M_\theta f$ in ${\Cal K}_0$ (with orthogonal ``increments")
be defined by:
\begin{eqnarray}
M_\theta f := \sum_j \gamma_j(\theta) \, \psi_j. \label{Mthetaf}
\end{eqnarray}
Under the condition $ \sum_{j\in J}\big(\sum_{n}|a_{j,\n}|\big)^2 <
+\infty$, $M_\theta f$ is defined for every $\theta$, the function
$\theta \to \|M_\theta\|_2^2$ is continuous and is equal everywhere
to $\varphi_f$. For a general function $f \in L_0^2(G)$, it is
defined for $\theta$ in a set $\Lambda_0(f)$ of full measure in
$\T^d$.

Remark that the choice of the system $(\psi_j)$ generating the
orthonormal basis $(T^{\n} \psi_j)$ is not unique, so that the
definition of $M_\theta f$ is not canonical. But for algebraic
automorphisms of a compact abelian group $G$, Fourier analysis gives
a natural choice for the basis.

\vskip 3mm {\bf Approximation by orthogonal increments}

The rotated sums of $M_\theta f$ approximate the rotated sums
of $f$ in the following sense:
\begin{lem} \label{approxMart} Let $(R_n)$ be a summation sequence with associated kernel $(\tilde R_n)$.
\hfill \break a) Let $\Cal L$ be a space of functions on $\T^d$ with
the property that if $0 \leq \varphi \in \Cal L$ and $|\psi|^2 \leq
\varphi$, then $\psi, |\psi|^2 \in \Cal L$. Suppose that, for every
$\varphi \in \Cal L$, $\lim_n (\tilde R_n * \varphi)(t) =
\varphi(t)$ for a.e. $t \in \T^d$. Then, if $f$ in $L_0^2(\mu)$ is
such that $\varphi_f \in \Cal L$, we have for $\theta$ in a set
$\Lambda(f)\subset \Lambda_0(f)$ of full Lebesgue measure in $\T^d$:
\begin{eqnarray}
\lim_n {\| \sum_{\el \in \N^d} R_n(\el) \, e^{2\pi
i \langle \el, \theta \rangle} \, T^\el(f -M_\theta f)\|_2^2
\over \sum_{\el \in \N^d} |R_n(\el)|^2}
= 0. \label{approxtheta2}
\end{eqnarray}

b) If the functions $\varphi_f$, $\sum_j |\gamma_j|^2$, $\gamma_j$,
for all $j$ in $J$, are continuous and if $\varphi_f(\theta) =\sum_j
|\gamma_j(\theta)|^2 \, \forall \theta$, then, for any F\o{}lner
sequence $(R_n)$, (\ref{approxtheta2}) holds for every $\theta$.
\end{lem}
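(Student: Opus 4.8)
The plan is to expand the $L^2$-norm in \eqref{approxtheta2} using the spectral machinery of Notation~\ref{gammaj2}, reduce everything to a statement about convolutions of the kernels $\tilde R_n$ against the spectral densities $\gamma_j$ and $\varphi_f$, and then invoke the hypothesis on $\Cal L$ (for part a) or Fej\'er-type convergence together with continuity (for part b). First I would note that $f - M_\theta f$ decomposes along the invariant subspaces $\cH_j$: writing $f = \sum_j f_j$ with $f_j = \sum_{\tn} a_{j,\tn} T^{\tn}\psi_j$ and $M_\theta f = \sum_j \gamma_j(\theta)\psi_j$, the component of $f - M_\theta f$ in $\cH_j$ has Fourier coefficients $(a_{j,\tn} - \gamma_j(\theta)\,\delta_{\tn,\0})$ against the basis $(T^{\tn}\psi_j)$; equivalently, its spectral density on $\T^d$ is $|\gamma_j(t) - \gamma_j(\theta)|^2$ after incorporating the rotation. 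By orthogonality of the $\cH_j$ and the standard identity $\|\sum_{\el} c_\el T^\el g\|_2^2 = \int_{\T^d} |\sum_\el c_\el e^{2\pi i\langle\el,t\rangle}|^2 \, d\nu_g(t)$, the numerator of \eqref{approxtheta2} equals
\[
\sum_{j\in J} \int_{\T^d} \Bigl|\sum_{\el\in\N^d} R_n(\el)\,e^{2\pi i\langle\el,\theta+t\rangle}\Bigr|^2\, |\gamma_j(t)-\gamma_j(\theta)|^2 \, dt,
\]
so after dividing by $\sum_\el |R_n(\el)|^2$ and a change of variable $t \mapsto t-\theta$ this is $\sum_j \bigl(\tilde R_n * g_{j,\theta}\bigr)(\theta)$ where $g_{j,\theta}(t) := |\gamma_j(t) - \gamma_j(\theta)|^2$.

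The next step is to control $\sum_j (\tilde R_n * g_{j,\theta})(\theta)$. Expanding the square, $g_{j,\theta} = |\gamma_j|^2 - 2\,\mathrm{Re}\bigl(\overline{\gamma_j(\theta)}\,\gamma_j\bigr) + |\gamma_j(\theta)|^2$, so
\[
(\tilde R_n * g_{j,\theta})(\theta) = (\tilde R_n * |\gamma_j|^2)(\theta) - 2\,\mathrm{Re}\bigl(\overline{\gamma_j(\theta)}\,(\tilde R_n*\gamma_j)(\theta)\bigr) + |\gamma_j(\theta)|^2\,(\tilde R_n * 1)(\theta),
\]
using that $\tilde R_n$ is a probability density on $\T^d$ so $\tilde R_n * 1 \equiv 1$. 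For part a), the hypothesis on $\Cal L$ is exactly engineered so that $\varphi_f\in\Cal L$ forces each $|\gamma_j|^2 \in\Cal L$ (since $|\gamma_j|^2 \le \varphi_f$) and then each $\gamma_j\in\Cal L$; hence, outside a null set, $(\tilde R_n*|\gamma_j|^2)(\theta)\to|\gamma_j(\theta)|^2$ and $(\tilde R_n*\gamma_j)(\theta)\to\gamma_j(\theta)$, so each summand tends to $|\gamma_j(\theta)|^2 - 2|\gamma_j(\theta)|^2 + |\gamma_j(\theta)|^2 = 0$. To pass from termwise convergence to convergence of the sum I would use a dominated-convergence argument: the partial sums $\sum_{j} (\tilde R_n*|\gamma_j|^2)(\theta) = (\tilde R_n * \sum_j|\gamma_j|^2)(\theta) = (\tilde R_n*\varphi_f)(\theta)$ converge (a.e.) to $\varphi_f(\theta) = \sum_j|\gamma_j(\theta)|^2 < \infty$ by the $\Cal L$-hypothesis applied to $\varphi_f$ itself, and similarly the cross terms are dominated via Cauchy–Schwarz by $(\tilde R_n*\varphi_f)(\theta)^{1/2}\,\varphi_f(\theta)^{1/2}$-type bounds; carefully choosing $\Lambda(f) \subset \Lambda_0(f)$ as the full-measure set on which all these countably many a.e.-convergences hold simultaneously (and $\varphi_f(\theta) = \sum_j|\gamma_j(\theta)|^2$, which holds a.e.) gives the claim.

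For part b) the argument is the same expansion, but now there is no null set to discard: continuity of $\gamma_j$, of $\varphi_f$, and of $\sum_j|\gamma_j|^2$, together with the pointwise identity $\varphi_f(\theta) = \sum_j|\gamma_j(\theta)|^2$ for all $\theta$, means that a F\o lner sequence — whose kernels $\tilde R_n$ weakly converge to the Dirac mass at $0$ — yields $(\tilde R_n*\psi)(\theta)\to\psi(\theta)$ for every $\theta$ and every continuous $\psi$, in particular for $\psi = \gamma_j$, $\psi=|\gamma_j|^2$, $\psi = \varphi_f = \sum_j|\gamma_j|^2$; so each summand $\to 0$ everywhere, and the domination needed to exchange sum and limit is again furnished by $(\tilde R_n*\varphi_f)(\theta)\to\varphi_f(\theta)$. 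The main obstacle, and the only genuinely delicate point, is the interchange of the limit $n\to\infty$ with the infinite sum over $j\in J$: termwise convergence of nonnegative-ish quantities is not enough by itself, and I expect the cleanest route is to bound the tail $\sum_{j > J_0}(\tilde R_n * g_{j,\theta})(\theta)$ uniformly in $n$ by a tail of $\sum_j |\gamma_j(\theta)|^2$ plus a tail of $(\tilde R_n*\sum_{j>J_0}|\gamma_j|^2)(\theta)$, the latter handled by monotone/dominated convergence from $(\tilde R_n*\varphi_f)(\theta)\to\varphi_f(\theta)$; establishing this uniform tail control is where the bookkeeping lives. Everything else — the spectral expansion, the change of variables, the use of $\tilde R_n * 1 \equiv 1$, and the invocation of the $\Cal L$-hypothesis or of F\o lner convergence — is routine.
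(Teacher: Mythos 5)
Your proposal is correct and follows essentially the same route as the paper: decompose $f-M_\theta f$ along the orthogonal subspaces $\cH_j$ so that the normalized norm becomes $\int_{\T^d}\tilde R_n(t-\theta)\,\sum_j|\gamma_j(t)-\gamma_j(\theta)|^2\,dt$, expand the square, apply the $\Cal L$-hypothesis (resp.\ continuity and F\o{}lner convergence) termwise, and control the interchange of limit and sum by splitting off a finite set $J_0$ and bounding the tail via the convergence of $\tilde R_n * \varphi_f$ minus the finite part. The ``uniform tail control'' you flag as the delicate point is handled in the paper exactly as you sketch, so no gap remains.
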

\begin{proof} The proof of a) is analogous to that of Proposition 1.4 in \cite{CohCo12}.
The projection of $f - M_\theta f$ on $\cH_j$ is $f_j
- \gamma_j(\theta) \psi_j$ and its spectral density is
$$\varphi_{f - M_\theta f}(t) =\sum_{j\in J}|\gamma_j(t)-\gamma_j(\theta)|^2
= \sum_j |\gamma_j(t)|^2 + \sum_j |\gamma_j(\theta)|^2 - \sum_j
\gamma_j(t) {\overline \gamma_j(\theta)} - \sum_j {\overline
\gamma_j(t)} \gamma_j(\theta).$$
We have
\begin{eqnarray}
{\| \sum_{\el \in \N^d} R_n(\el) \, e^{2\pi
i \langle \el, \theta \rangle} \, T^\el(f -M_\theta f)\|_2^2
\over \sum_{\el \in \N^d} |R_n(\el)|^2}  &=& \int_{\T^d}
\tilde R_n(t-\theta) \ \varphi_{f - M_\theta f}(t) \ dt. \label{quadAppr1}
\end{eqnarray}

Observe that $|\gamma_j|^2 \leq \varphi_f$. Let $\Lambda_0'(f)$ be the set of
full measure of $\theta$'s given by the hypothesis such that convergence holds at $\theta$ for
$|\gamma_j|^2$, $\gamma_j$, $ \forall j\in J$, and $\sum_{j \, \in
J} |\gamma_j|^2$.

Take $\theta \in \Lambda_0(f) \cap \Lambda_0'(f)$. Let $\varepsilon >0$
and let $J_0=J_0(\varepsilon,\theta)$ be a finite subset of $J$ such
that $\sum_{j \not \in J_0}|\gamma_j(\theta)|^2 < \varepsilon$.
Since
\begin{eqnarray*}
&&\lim_n \int_{\T^d} \Tilde R_n(t-\theta) \ \sum_{j \not \in J_0} \big
|\gamma_j(t)|^2 \ dt \\ &=& \lim_n \int_{\T^d} \Tilde R_n(t-\theta) \
\sum_{j \in J} |\gamma_j(t)|^2 \ dt - \lim_n \int_{\T^d}
\Tilde R_n(t-\theta) \ \sum_{j \, \in J_0} |\gamma_j(t)|^2 \ dt = \sum_{j
\not \in J_0} |\gamma_j(\theta)|^2,
\end{eqnarray*}
we have
\begin{eqnarray*}
&&\limsup_n \int_{\T^d} \Tilde R_n(t-\theta) \ \varphi_{f - M_\theta f}(t) \ dt\\
&& \leq \lim_n \int_{\T^d} \Tilde R_n(t-\theta) \ \sum_{j \, \in J_0}
\big[|\gamma_j(t)|^2 + |\gamma_j(\theta)|^2 - \gamma_j(t) {\overline
\gamma_j(\theta)} - {\overline \gamma_j(t)} \gamma_j(\theta)\big] \
dt \\ &&\quad \quad + 2 \lim_n \int_{\T^d} \Tilde R_n(t-\theta) \ \sum_{j
\not \in J_0}
\big[|\gamma_j(t)|^2 + |\gamma_j(\theta)|^2\big] \ dt\\
&& = \sum_{j \, \in J_0} \big[|\gamma_j(\theta)|^2 +
|\gamma_j(\theta)|^2 - \gamma_j(\theta) {\overline \gamma_j(\theta)}
- {\overline \gamma_j(\theta)} \gamma_j(\theta)\big] + 4 \sum_{j
\not \in J_0} |\gamma_j(\theta)|^2 \leq 0 + 4 \varepsilon.
\end{eqnarray*}
Therefore $\Lambda(f)$, the set for which (\ref{approxtheta2}) holds,
contains $\Lambda_0(f) \cap \Lambda_0' (f)$ and has full measure.

The proof of b) uses the same expansion as in 1).
\end{proof}

\vskip 2mm {\bf Variance for summation sequences}

Let $(D_n) \subset \N^d$ be an increasing sequence of subsets. For
$f \in L_0^2(\mu)$, the asymptotic variance at $\theta$ along
$(D_n)$ is, when it exists, the limit
\begin{eqnarray}
\sigma_\theta^2(f) = \lim_n { \|\sum_{{\el} \, \in D_n} \, e^{2\pi i
\langle \el, \theta \rangle} T^{{\el}} f\|_2^2 \over |D_n|}.
\label{VarergRotSumDn}
\end{eqnarray}

By the spectral theorem, if $\varphi_f \in L^1(\T^d)$ is the
spectral density of $f$ and $\tilde R_n$ the kernel associated to
$(D_n)$, then
\begin{eqnarray}
|D_n|^{-1} \, \|\sum_{{\el} \, \in D_n} \, e^{2\pi i \langle \el,
\theta \rangle} T^{{\el}} f\|_2^2 = (\tilde R_n *
\varphi_f)(\theta). \label{ConvVarRotergSumDn}
\end{eqnarray}
If $(D_n)$ is a sequence of $d$-dimensional cubes, we obtain, when it exists,
the usual asymptotic variance at $\theta$. By the  Fej\'er-Lebesgue
theorem, for of cubes, for every  $f$ in $\Cal H$ it exists and is equal to
$\varphi_f(\theta)$ for a.e. $\theta$.

When $\varphi_f$ is continuous, for F\o{}lner sequences, for every
$\theta$, the asymptotic variance at $\theta$ is
$\varphi_f(\theta)$. More generally, if $(R_n)$ is a regular
summation sequence with $(\tilde R_n)$ weakly converging to the
measure $\zeta$ on $\T^d$, the asymptotic variance at $\theta$ is
$\int_{\T^d} \varphi_f(\theta - t) \, d\zeta(t)$.

\vskip 2mm {\bf Variance for barycenters}

Let $P$ and $P_\theta$ be defined by (\ref{defBary}) for $d$
commuting unitary operators $T_1, ..., T_d$ on a Hilbert space
${\Cal H}$ generating a group ${\Cal S}$ with the Lebesgue spectrum
property and let $(p_1, ..., p_d)$ be a probability vector such that
$p_j > 0, \forall j$. If $\varphi_f$ is the spectral density of $f$
in ${\Cal H}$ with respect to the action of ${\Cal S}$, we have:
$$\|P_\theta^n f\|_2^2 = \int_{\T^d} \, |\sum_{j=1}^d \, p_j \,
e^{2\pi i t_j}|^{2n} \,\varphi_f(\theta_1-t_1, ..., \theta_d-t_d) \
dt_1 \, ... \, dt_d.$$

In order to find the normalization of $P^n f$ for $f \in \Cal H$, we
need an estimation, when $n \to \infty$, of the integral
$I_n:=\int_{\T^d} |\sum_j p_j e^{2\pi i t_j}|^{2n} \ dt_1 ... dt_d$.

\begin{prop} \label{PropEstimIn} If $(p_1, ..., p_d)$ is
a probability vector such that $p_j > 0, \forall j$, we have
\begin{eqnarray}
&&\lim_n n^{d-1 \over 2} \int_{\T^d} |\sum_j p_j e^{2\pi i
t_j}|^{2n} \ dt_1 ... dt_d = (4\pi)^{-{d-1 \over 2}} \, (p_1 ...
p_{d})^{-\frac12}. \label{EstimIn0}
\end{eqnarray}
\end{prop}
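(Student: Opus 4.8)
The plan is to reduce the $d$-dimensional integral to a Laplace-type asymptotic analysis near the points where the modulus $|\sum_j p_j e^{2\pi i t_j}|$ attains its maximum, namely $1$. First I would observe that, writing $g(\underline t) = \sum_{j=1}^d p_j e^{2\pi i t_j}$, one has $|g(\underline t)| \le \sum_j p_j = 1$ by the triangle inequality, with equality iff all the $e^{2\pi i t_j}$ coincide, i.e. iff $t_1 \equiv t_2 \equiv \cdots \equiv t_d \pmod 1$. On the diagonal $\{t_1 = \cdots = t_d = s\}$ we have $g = e^{2\pi i s}$, and by invariance of the integrand under the simultaneous translation $t_j \mapsto t_j + s$, the full integral equals $\int_{\T^{d-1}} |\sum_j p_j e^{2\pi i t_j}|^{2n}\, dt_1 \cdots dt_{d-1}$ evaluated with $t_d$ fixed (say $t_d = 0$), up to the trivial factor from integrating in $s$. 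So the contribution localizes at the single point $t_1 = \cdots = t_{d-1} = 0$.

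Next I would compute the Hessian of the phase. Near the origin, set $|g|^2 = 1 - Q(\underline t) + o(|\underline t|^2)$ where $Q$ is a positive definite quadratic form in $t_1, \dots, t_{d-1}$. A direct expansion gives $|g(\underline t, 0)|^2 = \big(\sum_j p_j \cos 2\pi t_j\big)^2 + \big(\sum_j p_j \sin 2\pi t_j\big)^2$ with $t_d = 0$; expanding to second order yields $|g|^2 = 1 - 4\pi^2\big(\sum_{j<d} p_j t_j^2 - (\sum_{j<d} p_j t_j)^2\big) + O(|\underline t|^4)$, so $Q(\underline t) = 4\pi^2\big(\sum_{j=1}^{d-1} p_j t_j^2 - (\sum_{j=1}^{d-1} p_j t_j)^2\big)$. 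One then checks that $\det(\text{the matrix of } Q) = (4\pi^2)^{d-1} p_1 \cdots p_{d-1}\,(1 - \sum_{j<d} p_j) = (4\pi^2)^{d-1} p_1 \cdots p_d$, using the matrix-determinant lemma for the rank-one perturbation $\mathrm{diag}(p_j) - p p^{\,t}$ with $p = (p_1,\dots,p_{d-1})^t$.

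Now $|g|^{2n} = e^{n \log|g|^2} \approx e^{-n Q(\underline t)}$ near $0$, and standard Laplace asymptotics give $\int_{\R^{d-1}} e^{-n Q(\underline t)}\, d\underline t = \frac{\pi^{(d-1)/2}}{n^{(d-1)/2}\sqrt{\det Q}} = \frac{\pi^{(d-1)/2}}{n^{(d-1)/2}(4\pi^2)^{(d-1)/2}\sqrt{p_1\cdots p_d}} = (4\pi n)^{-(d-1)/2}(p_1\cdots p_d)^{-1/2}$, which is exactly the claimed right-hand side. To make this rigorous I would (i) restrict to a small neighborhood $U$ of the diagonal and show the complementary region contributes $o(n^{-(d-1)/2})$ because there $|g| \le 1 - \delta < 1$, so that part is exponentially small; (ii) on $U$, use the change of variables removing the diagonal direction and then a further rescaling $\underline t = \underline u/\sqrt n$ to turn the integral into $n^{-(d-1)/2}\int e^{n\log|g(\underline u/\sqrt n)|^2}\,d\underline u$; (iii) justify passing to the limit inside the integral by dominated convergence, using the uniform bound $\log|g(\underline t)|^2 \le -c|\underline t|^2$ for $\underline t$ in $U$ (valid since $Q$ is positive definite and higher-order terms are controlled on a small enough neighborhood). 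The main obstacle is step (iii): one must produce the quadratic upper bound $\log|g(\underline t,0)|^2 \le -c|\underline t|^2$ holding uniformly on a fixed neighborhood $U$, not merely infinitesimally — this requires a genuine (non-asymptotic) estimate, which follows from compactness together with the fact that $|g| = 1$ only on the diagonal, but it is the point where care is needed.
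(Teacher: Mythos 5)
Your proposal is correct and follows essentially the same route as the paper: reduction to a $(d-1)$-dimensional integral by translation invariance, the uniform quadratic lower bound $1-|g|^2\geq c\|\underline t\|^2$ (the paper's inequality~(\ref{neighbF}), obtained by the same compactness argument you indicate), and Laplace asymptotics via the rescaling $\underline t=\underline u/\sqrt n$ against the positive definite form $Q$ of Lemma~\ref{quadrF}. The only cosmetic differences are that you fix $t_d$ rather than $t_1$ and compute $\det Q=(4\pi^2)^{d-1}p_1\cdots p_d$ by the matrix determinant lemma instead of the paper's inductive polynomial argument.
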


\begin{lem} \label{quadrF} Let $r$ be an integer $\geq 1$ and let $(q_1, ...,
q_r)$ be a vector such that $q_j > 0, \forall j$ and $\sum_j q_j
\leq 1$. Then the quadratic form $Q$  on $\R^{r}$ defined by
\begin{eqnarray}
&&Q(\t) = \sum_{j=1}^{r} q_{j} t_j^2 - (\sum_{j=1}^{r} q_{j} t_j)^2
\label{quadrQ}
\end{eqnarray}
is positive definite with determinant $(1- \sum_j q_j) \,q_1 ...
q_r$.
\end{lem}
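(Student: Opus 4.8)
The plan is to compute the determinant directly and read off positive definiteness from it together with a quick sign check. First I would write $Q(\t) = \trp\t \, M \, \t$ where $M = D - q\,\trp q$ with $D = \diag(q_1, \ldots, q_r)$ and $q = \trp(q_1, \ldots, q_r)$; indeed $\sum_j q_j t_j^2 = \trp\t D \t$ and $(\sum_j q_j t_j)^2 = \trp\t\, q\,\trp q\,\t$. So $M$ is a rank-one perturbation of the positive definite diagonal matrix $D$, and the natural tool is the matrix determinant lemma: $\det(D - q\,\trp q) = \det(D)\,(1 - \trp q\, D^{-1} q)$. Since $\det D = q_1\cdots q_r$ and $\trp q\, D^{-1} q = \sum_j q_j^2/q_j = \sum_j q_j$, this gives $\det M = q_1\cdots q_r\,(1 - \sum_j q_j)$, which is the claimed value and is strictly positive because each $q_j > 0$ and $\sum_j q_j \leq 1$ — here one uses that the hypothesis $\sum_j q_j \le 1$ must in fact be strict for $Q$ to be definite, so I would note that the case $\sum_j q_j = 1$ is excluded (or handled as the degenerate boundary case) and assume $\sum_j q_j < 1$.

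Next I would establish positive semidefiniteness of $Q$ for free: by Cauchy–Schwarz, $(\sum_j q_j t_j)^2 = (\sum_j \sqrt{q_j}\cdot \sqrt{q_j}\,t_j)^2 \leq (\sum_j q_j)(\sum_j q_j t_j^2) \leq \sum_j q_j t_j^2$, using $\sum_j q_j \leq 1$ and $\sum_j q_j t_j^2 \geq 0$. Hence $Q(\t) \geq 0$ for all $\t$, so $M$ is positive semidefinite. To upgrade to positive definiteness, the cleanest route is to observe that $M$ is positive semidefinite with $\det M > 0$: a symmetric positive semidefinite matrix with nonzero determinant has all eigenvalues positive (zero is not an eigenvalue), hence is positive definite. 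That closes the argument without having to chase when the Cauchy–Schwarz inequalities are tight.

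Alternatively — and this is the variant I would actually present since it proves both claims in one stroke via induction on $r$ — I would compute $\det M$ by induction. For $r = 1$, $Q(t) = q_1 t_1^2 - q_1^2 t_1^2 = (1-q_1)q_1 t_1^2$, matching. For the inductive step, expand along the last row/column, or better, use the standard identity for bordered determinants, writing the $r\times r$ matrix $M_r = D_r - q\,\trp q$ and relating $\det M_r$ to $\det M_{r-1}$ where one absorbs the last variable; this yields the recursion $\det M_r = q_r \det M_{r-1} - (\text{correction}) = (1 - q_1 - \cdots - q_r)q_1\cdots q_r$ after simplification. Positive definiteness then follows from Sylvester's criterion: every leading principal minor is $\det M_k = (1 - q_1 - \cdots - q_k)q_1\cdots q_k > 0$ since $q_1 + \cdots + q_k \leq q_1 + \cdots + q_r \leq 1$ (again with the strictness caveat at the top level).

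The only real subtlety — hardly an obstacle — is bookkeeping the boundary case $\sum_j q_j = 1$: there $\det M = 0$, $Q$ is only positive semidefinite, and in the intended application (Proposition \ref{PropEstimIn}, with $q_j$ proportional to the $p_j$ after dropping one coordinate) one has $\sum_j q_j < 1$ strictly, so I would simply state the hypothesis as $\sum_j q_j < 1$ or note that equality is the degenerate case. Everything else is the matrix determinant lemma plus Cauchy–Schwarz, both routine.
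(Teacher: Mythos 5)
Your proposal is correct, and it reaches both conclusions by a genuinely different route from the paper. For the determinant, you apply the matrix determinant lemma directly to $M=D-q\,\trp q$ with $D=\diag(q_1,\dots,q_r)$, getting $\det M=\det(D)\,(1-\trp q D^{-1}q)=q_1\cdots q_r(1-\sum_j q_j)$ in one line; the paper instead factors its matrix as $A=\diag(q_1,\dots,q_r)\,B$ and evaluates $\det B$ by observing it is an affine expression $\alpha+\sum_j\beta_j q_j$ in the $q_j$ and testing special values, which is essentially the same rank-one structure exploited less explicitly. For positive definiteness, the paper runs an induction on $r$, viewing $Q$ as a quadratic polynomial in $t_1$ and showing its discriminant is negative via the induction hypothesis applied to the rescaled weights $q_j/(1-q_1)$; your Cauchy--Schwarz argument gives positive semidefiniteness for free, and combining it with $\det M>0$ (a psd symmetric matrix with nonzero determinant has no zero eigenvalue) is cleaner and avoids the discriminant bookkeeping. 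Your inductive variant with Sylvester's criterion is also fine and closest in spirit to the paper. One point in your favor: you correctly note that under the stated hypothesis $\sum_j q_j\le 1$ the conclusion of positive definiteness actually fails at the boundary $\sum_j q_j=1$ (take all $t_j=1$, giving $Q(\t)=0$), so strict inequality is needed; the paper's statement and inductive proof gloss over this, though in the application to Proposition \ref{PropEstimIn} one has $\sum_j q_j=1-p_1<1$, so nothing downstream is affected.
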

\begin{proof} The proof is by induction on $r$. Let us consider
the polynomial in $t_1$ of degree 2:
$$q_1 t_1^2 + \sum_2^r q_j t_j^2 - (q_1 t_1 + \sum_2^r q_j t_j)^2 =
(q_1 - q_1^2) t_1^2 - 2 q_1 (\sum_2^r q_j t_j) t_1 + \sum_2^r q_j
t_j^2 - (\sum_2^r q_j t_j)^2.$$

It is always $\geq 0$, since its discriminant
$$q_1^2 (\sum_2^r q_j t_j)^2 - q_1 (1 - q_1) (\sum_2^r q_j t_j^2 -
(\sum_2^r q_j t_j)^2) = q_1(1-q_1)^2 [(\sum_2^r {q_j \over 1 - q_1}
t_j)^2 - \sum_2^r {q_j \over 1 - q_1}t_j^2],$$ is $< 0$ for
$\sum_{j=2}^r t_j^2 \not = 0$ by the induction hypothesis, since
${q_j \over 1 - q_1} > 0$ and $\sum_{j=2}^r {q_j \over 1 - q_1} \leq
1$.

The quadratic form is given by the symmetric matrix: $A = \diag(q_1,
..., q_r) \, \, B$, where
\begin{eqnarray*} B &&=  \left( \begin{matrix} 1 - q_1 & -q_2 & . & -q_r
\cr -q_1 & 1- q_2 & . & -q_r \cr . & . & . & . \cr -q_1 & -q_2& . &
1 -q_r \cr
\end{matrix} \right).
\end{eqnarray*}

The determinant of $B$ is of the form $\alpha + \sum_j \beta_j q_j$,
where the coefficients $\alpha, \beta_1, ..., \beta_r$ are constant.
Giving to $q_1, ..., q_r$ the values $0$ except for one of them, we
find $\alpha =1$, $\beta_1 = \beta_2 = ... = \beta_r = -1$. Hence
$\det A = (1- \sum_j q_j) \,q_1 ... q_r$.
\end{proof}

Remark that the positive definiteness follows also from the
properties of $F$, since $Q$ gives the approximation of $F$ defined
below at order 2.

{\it Proof of Proposition \ref{PropEstimIn}} \
Since $|\sum_j p_j e^{2\pi i t_j}|^{2n}= |p_1 + \sum_{j=2}^d p_j e^{2\pi i
(t_j - t_1)}|^{2n}$, we have $I_n = \int_{\T^{d-1}} |p_1 + \sum_{j=2}^d p_j
e^{2\pi i t_j}|^{2n} \ dt_2 ... dt_{d}$.

Putting $q_j:=p_{j+1}$, $j=1, ..., d-1$, $r=d-1$, we have $q_j > 0$,
$\sum_1^r q_j < 1$. With the notation $F(\t) := 1 - |1 +
\sum_{j=1}^r q_j (e^{2\pi i t_j} -1)|^2$ the computation reduces to
estimate:
$$I_n:=\int_{\T^r} [|1 + \sum_{j=1}^r q_j (e^{2\pi i t_j} -1)|^2]^{n} \ dt_1 ... dt_r
=\int_{\T^r} [1- F(\t)]^{n} \ dt_1 ... dt_r.$$ A point $\t=(t_1,
...t_{r})$ of the torus is represented by coordinates such that:
$-\frac12 \leq t_j < \frac12$. We have $F(\t) \geq 0$ and $F(\t)= 0$
if and only if $\t = 0$. Let us prove the stronger property: there
is $c > 0$ such that
\begin{eqnarray}
F(\t) \geq c \|\t\|^2, \ \forall \t: -\frac12 \leq t_j < \frac12.
\label{neighbF}
\end{eqnarray}
Indeed Inequality (\ref{neighbF}) is clearly satisfied outside a small open
neighborhood $V$ of $0$, since $F(\t)$ is bounded away from 0 for
$\t$ in $V$. On $V$, we can replace $F$ by a positive definite quadratic form
as we will see below. This shows the result on $V$.

From the convergence
$$\lim_n n^{{r \over 2}} \int_{\{\t \in \T^{r}: \|\t\| > {\ln n
\over \sqrt{n}} \}} (1-F(\t))^{n} \ d\t \leq \lim_n n^{{d-1 \over
2}} (1 - c{(\ln n)^2 \over {n}})^n = 0,$$ it follows, with $J_n := \int_{\{\t \in \T^{r}:
\|\t\| \leq {\ln n \over \sqrt{n}} \}} (1 -F(\t))^{n} \ d\t$:
$$\lim_n n^{{r \over 2}} \int_{\t \in \T^{d-1}} (1-F(\t))^{n} \ d\t
= \lim_n n^{{r \over 2}} J_n.$$
By taking the Taylor approximation of order 2 at 0 of the
exponential function $e^{i t_j} = 1 + i t_j - {t_j^2 \over 2} +  i
\gamma_1(t_j) +\gamma_2(t_j)$, with $|\gamma_1(t_j)+ |\gamma_2(t_j)|
= o(|t_j|^2)$, we obtain:
$$F(t) = Q(2\pi t) + \gamma(t), \ \text{ with } Q(\t) = \sum q_j t_j^2
- (\sum q_j t_j)^2 \ \text{ and } \gamma(t) = o(\|t\|^2).$$

The quadratic form $Q$ is the form defined by (\ref{quadrQ}).
Therefore, it is positive definite by Lemma \ref{quadrF} and there
is $c > 0$ such that $Q(\t) \geq c \|\t\|^2, \forall \t \in \R^{r}$.

We have $\lim_{\delta \downarrow 0} \sup_{\|t\| \leq \delta} F(t) /
Q(2\pi t) = 1$. With the notation $\u=(u_1, ..., u_r), \, \t=(t_1,
..., t_r)$ and the change of variable $\u = \sqrt n \, \t$, we get:
$$n^{{r \over 2}} J_n \sim \int_{\{\|\u\| \leq \ln n \}}
(1 - Q({2\pi \u \over \sqrt n}))^{n} \, d \u \to {1 \over
(2\pi)^{r}} \,  \int_{\R^{r}} e^{- Q(u)} \, d\u.$$

We have $\int_{\R^{r}} e^{- Q(u)} \, d\u = \pi^{{r \over 2}}
\det(A)^{-\frac12} = \pi^{r \over 2} \, (p_1... p_d)^{-\frac12}$.
Therefore we obtain:
\begin{eqnarray*}
&&\lim_n n^{d-1 \over 2} \int_{\T^d} |\sum_j p_j e^{2\pi i
t_j}|^{2n} \ dt_1 ... dt_d = (4\pi)^{-{r \over 2}} \, (p_1 ...
p_{d})^{-\frac12}.
\end{eqnarray*}
\eop

\vskip 2mm {\it Example}: With $K_n(t_1,t_2) := \sqrt{\pi
n}|({e^{2\pi i t_1} + e^{2\pi i t_2} \over 2})|^{2n}$, we have
$\int_{\T^2}K_n(t_1,t_2) dt_1 dt_2 \to 1$. This can be shown also
using Stirling's approximation:
\begin{eqnarray*}
\int_{\T^2}K_n(t_1,t_2)dt_1dt_2 = \frac{\sqrt{\pi
n}}{4^n}\,\sum_{k=0}^n{n\choose k}^2= \frac{\sqrt{\pi n}}{4^n}\,
{2n\choose n}\underset{n\to\infty}\to 1.
\end{eqnarray*}

\begin{prop} \label{barycenter} If $\varphi_f$ is continuous, then
for every $\theta\in\T^d$ we have
\begin{eqnarray}
\lim_{n\to\infty} \, (4\pi)^{{d-1 \over 2}} \, (p_1 ...
p_{d})^{\frac12} \, n^{d-1 \over 2} \,\|P_\theta^n f\|_2^2 = \int_\T
\varphi_f (\theta_1+u, ..., \theta_d+u) \, du. \label{convKn}
\end{eqnarray}
\end{prop}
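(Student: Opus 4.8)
The plan is to combine the spectral identity
$\|P_\theta^n f\|_2^2=\int_{\T^d}|\sum_{j=1}^d p_j e^{2\pi i t_j}|^{2n}\,\varphi_f(\theta_1-t_1,\dots,\theta_d-t_d)\,dt$
(displayed just before Proposition~\ref{PropEstimIn}) with the mass asymptotics of Proposition~\ref{PropEstimIn} and the quadratic lower bound~(\ref{neighbF}) proved on the way to it. Put $c_n:=(4\pi)^{\frac{d-1}{2}}(p_1\cdots p_d)^{\frac12}n^{\frac{d-1}{2}}$; then Proposition~\ref{PropEstimIn} reads $c_n\int_{\T^d}|\sum_j p_j e^{2\pi i t_j}|^{2n}\,dt\to1$, so after rescaling by $c_n$ the kernel $|\sum_j p_j e^{2\pi i t_j}|^{2n}$ is an approximate identity supported asymptotically on the diagonal $\{t_1=\cdots=t_d\}$ of $\T^d$; it remains to identify the limiting measure. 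Note the statement asks for convergence at \emph{every} $\theta$, which will be automatic since $\varphi_f$ is continuous.

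First I would apply the unimodular (hence measure-preserving) change of variables $t_j=t_1+s_j$, $2\le j\le d$, on $\T^d$, set $s=(s_2,\dots,s_d)$ and $g(s):=|p_1+\sum_{j=2}^d p_j e^{2\pi i s_j}|^2$, which is the quantity $1-F(s)$ appearing in the proof of Proposition~\ref{PropEstimIn} (after the relabelling $q_j=p_{j+1}$ there). Since $|\sum_{j=1}^d p_j e^{2\pi i t_j}|^2=g(s)$ does not depend on $t_1$, the identity becomes, with the shorthand $\Phi(t_1,s):=\varphi_f(\theta_1-t_1,\theta_2-t_1-s_2,\dots,\theta_d-t_1-s_d)$,
\[
\|P_\theta^n f\|_2^2=\int_\T\Big(\int_{\T^{d-1}}g(s)^n\,\Phi(t_1,s)\,ds\Big)\,dt_1 .
\]
Integrating over $t_1$ alone gives $\int_{\T^{d-1}}g^n=\int_{\T^d}|\sum_j p_j e^{2\pi i t_j}|^{2n}\,dt$, so $c_n\int_{\T^{d-1}}g^n\to1$; and by~(\ref{neighbF}), $g(s)^n=(1-F(s))^n\le e^{-nF(s)}\le e^{-cn\|s\|^2}$ for some $c>0$.

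Next, multiplying by $c_n$ and writing $\Phi(t_1,s)=\Phi(t_1,0)+[\Phi(t_1,s)-\Phi(t_1,0)]$, I would split
\begin{eqnarray*}
c_n\|P_\theta^n f\|_2^2 &=& \Big(c_n\int_{\T^{d-1}}g^n\Big)\int_\T\Phi(t_1,0)\,dt_1\\
&&{}+c_n\int_\T\int_{\T^{d-1}}g(s)^n\big[\Phi(t_1,s)-\Phi(t_1,0)\big]\,ds\,dt_1 .
\end{eqnarray*}
Since $\Phi(t_1,0)=\varphi_f(\theta_1-t_1,\dots,\theta_d-t_1)$, the first term tends to $\int_\T\varphi_f(\theta_1-t_1,\dots,\theta_d-t_1)\,dt_1=\int_\T\varphi_f(\theta_1+u,\dots,\theta_d+u)\,du$ (substitute $u=-t_1$ on the full torus), which is the right-hand side of~(\ref{convKn}). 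For the error term I would split the $s$-integral at $\|s\|=\delta$: on $\{\|s\|\le\delta\}$ the bracket is $\le\omega(\delta)$ uniformly in $t_1$, where $\omega$ is the modulus of continuity of $\varphi_f$ on the compact group $\T^d$, while $c_n\int_{\{\|s\|\le\delta\}}g^n$ stays bounded; on $\{\|s\|>\delta\}$ one has $c_n\int_{\{\|s\|>\delta\}}g^n\le c_n e^{-cn\delta^2}\to0$. Letting $n\to\infty$ and then $\delta\to0$ annihilates the error term, which yields~(\ref{convKn}).

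I do not anticipate a genuine difficulty: everything is routine once the change of variables is made, and the analytic content — the value of the limiting constant and the control $F(s)\ge c\|s\|^2$ — has already been supplied by Proposition~\ref{PropEstimIn}. The one step deserving care is exchanging $\lim_n$ with the outer $t_1$-integral, which is justified by dominated convergence — the inner integral $c_n\int_{\T^{d-1}}g(s)^n\Phi(t_1,s)\,ds$ is bounded uniformly in $(n,t_1)$ by $\|\varphi_f\|_\infty\,\sup_n c_n\int_{\T^{d-1}}g^n<\infty$ — or, equivalently, by the fact that uniform continuity of $\varphi_f$ makes the convergence of that inner integral to $\Phi(t_1,0)$ uniform in $t_1$.
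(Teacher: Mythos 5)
Your proof is correct. It shares the paper's starting point --- the spectral identity $c_n\|P_\theta^n f\|_2^2=(K_n*\varphi_f)(\theta)$ with $c_n=(4\pi)^{\frac{d-1}{2}}(p_1\cdots p_d)^{\frac12}n^{\frac{d-1}{2}}$, the shear $t_j\mapsto t_1+s_j$ that exhibits the kernel as a function of $s$ alone, and the normalization $c_n\int_{\T^{d-1}}g^n\to 1$ from Proposition \ref{PropEstimIn} --- but it diverges in the step that identifies the limit of the convolution. The paper invokes density of trigonometric polynomials in $C(\T^d)$ and computes $\lim_n\int K_n\chi_{\k}$ for each character, obtaining $1$ when $\sum_\ell k_\ell=0$ and $0$ otherwise, which identifies the weak limit of $K_n$ as the uniform measure on the diagonal; this requires revisiting the Gaussian asymptotics of Proposition \ref{PropEstimIn} once more with an oscillatory factor inserted. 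You instead run a direct approximate-identity argument: the pointwise bound $g(s)^n\le e^{-cnF(s)}\le e^{-cn\|s\|^2}$ from (\ref{neighbF}) shows the mass concentrates on $\{\|s\|\le\delta\}$ up to an error $c_n e^{-cn\delta^2}\to 0$, and uniform continuity of $\varphi_f$ controls the bracket $\Phi(t_1,s)-\Phi(t_1,0)$ there. Your route is more self-contained (no second pass through the Laplace-type computation with a character inserted) and yields a quantitative error in terms of the modulus of continuity of $\varphi_f$; the paper's route is the standard weak-convergence formalism and makes the limiting measure on the diagonal explicit as an object independent of $f$. Both proofs use exactly the same analytic inputs, so the difference is one of packaging rather than substance, and your justification of the interchange of $\lim_n$ with the $t_1$-integral (uniform-in-$t_1$ convergence, or domination by $\|\varphi_f\|_\infty\sup_n c_n\int g^n$) is sound.
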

\begin{proof} Let us put $c_n:= (4\pi)^{{d-1 \over
2}} \, (p_1 ... p_{d})^{\frac12} \, n^{d-1 \over 2}$ for the
normalization coefficient and $$K_n(t_1, ..., t_d) := c_n \,
|\sum_{j=1}^d \, p_j \, e^{2\pi i t_j}|^{2n}.$$

We have $c_n \, \|P_\theta^n f\|_2^2= (K_n*\varphi_f)(\theta_1, ...,
\theta_d)$ and, by (\ref{EstimIn0}) $\int_{\T^d} K_n(t_1, ..., t_d)
\, dt_1 ... dt_d \to 1$.

Let us show that for $\varphi$ continuous on $\T^d$, $\lim_n
\int_{\T^d} K_n \, \varphi \, dt_1... dt_d =  \int_\T \varphi(u,
..., u) \, du$. Using the density of trigonometric polynomials for
the uniform norm, it is enough to prove it for characters
$\chi_{\k}(\t)=\e^{2\pi i \sum_j k_j t_j}$, i.e., to prove that for
$\varphi = \chi_{\k}$ the limit is 0 if $\sum_\ell k_\ell \not = 0$,
and 1 if $\sum_\ell k_\ell = 0$. We have
\begin{eqnarray*}
&&\int_{\T^d} K_n(t_1, ..., t_d) \, e^{2\pi i \sum_\ell k_\ell
t_\ell} \, dt_1 ... dt_d  = c_n \, \int_{\T^d} |\sum_{j=1}^d \, p_j
\, e^{2\pi i t_j}|^{2n} \, e^{2\pi i \sum_\ell k_\ell t_\ell} \, dt_1 ... dt_d \\
&&= (c_n \, \int_{\T^{d-1}} |p_1 + \sum_{j=2}^d \, p_j \, e^{2\pi i
(t_j - t_1)}|^{2n} \, e^{2\pi i \sum_{\ell=2}^d k_\ell (t_\ell -
t_1)} \, dt_2 ... dt_d)  \ \int_{\T} e^{2\pi i (\sum_\ell k_\ell)
t_1} dt_1.
\end{eqnarray*}

Therefore it remains to show that the limit of the first factor when
$n \to \infty$ is 1. Using the proof and the result of Proposition
\ref{PropEstimIn}, we find that this factor is equivalent to
$$(4\pi)^{{d-1 \over 2}} \, (p_1 ... p_{d})^{\frac12}
\int_{\{\|\u\| \leq \ln n \}} (1 - Q({2 \pi \u \over \sqrt
n}))^n e^{2\pi i \sum_1^r k_{\ell +1} {u_\ell \over \sqrt n}} \
d\u,$$ which tends to 1.
\end{proof}

\vskip 3mm
\subsection{\bf Nullity of variance and coboundaries}
 \

Let $\Cal H$ be a Hilbert space and let $T_1$ and $T_2$ be two
commuting unitary operators acting on $\Cal H$. Assuming the
Lebesgue spectrum property for the $\Z^2$-action generated by $T_1$
and $T_2$, we study in this subsection the degeneracy of the
variance. Here we consider, for simplicity, the case of two unitary
commuting operators, but the results are valid for any finite family
of commuting unitary operators.

\vskip 3mm {\bf Single Lebesgue spectrum}

At first, let us assume that there is $\psi \in \Cal H$ such that
the family of vectors $T_1^k T_2^r \psi$ for $(k,r) \in \Z^2$ is an
orthonormal basis of $\Cal H$ (simplicity of the spectrum).

\begin{lem} \label{cobRepr} Let $f$ be in $\Cal H$ and $f= \sum_{(k,r)
\in \Z^2} a_{k,r} \, T_1^k T_2^r \psi$ be the representation of $f$
in the orthonormal basis $(T_1^k T_2^r \psi, \ (k,r) \in \Z^2)$. If
\begin{eqnarray}
A:=\sum_{k,r \in \Z^2} \, (1 + |k| + |r|)\, |a_{k,r}| < +\infty,
\label{convCob}
\end{eqnarray}
there exists $u, v \in \Cal H$ with $\|u\|, \|v\| \leq A$ such that
$$f = (\sum_{(k,r) \in \Z^2} a_{k,r}) \, \psi + (I- T_1) u + (I-T_2)v.$$
If $\sum_{(k,r) \in \Z^2} a_{k,r} = 0$, then $f$ is sum of two
coboundaries respectively for $T_1$ and $T_2$:
$$f = (I- T_1) u + (I-T_2)v.$$
\end{lem}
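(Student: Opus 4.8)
The plan is to work character by character on the torus side. Since the family $(T_1^k T_2^r \psi)_{(k,r)\in\Z^2}$ is an orthonormal basis coming from the single generator $\psi$, a vector $g = \sum_{k,r} b_{k,r}\,T_1^k T_2^r\psi$ is naturally encoded by its symbol $\hat g(t_1,t_2) = \sum_{k,r} b_{k,r}\,e^{2\pi i(k t_1 + r t_2)}$, and the operators $T_1,T_2$ act as multiplication by $e^{2\pi i t_1}$, $e^{2\pi i t_2}$ respectively; moreover $\|g\|^2 = \sum|b_{k,r}|^2 = \int_{\T^2}|\hat g|^2$. Under hypothesis (\ref{convCob}) the symbol $\hat f$ is not merely $L^2$ but absolutely convergent, indeed $C^1$ in a suitable summable sense: $\sum_{k,r}(1+|k|+|r|)|a_{k,r}| < \infty$ guarantees that
\[
\hat f(t_1,t_2) - \hat f(0,t_2) = (e^{2\pi i t_1}-1)\,U(t_1,t_2),
\qquad
\hat f(0,t_2) - \hat f(0,0) = (e^{2\pi i t_2}-1)\,V(t_2),
\]
where $U$ and $V$ have absolutely summable Fourier coefficients with $\ell^1$-norm bounded by $A$. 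The first step is to establish exactly these two factorizations with explicit $\ell^1$ bounds.

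First I would set $u_{k,r} := -\sum_{m > k} a_{m,r}$ for $k \ge 0$ and $u_{k,r} := \sum_{m \le k} a_{m,r}$ for $k < 0$, so that $(e^{2\pi i t_1}-1)\sum_{k,r} u_{k,r} e^{2\pi i(kt_1+rt_2)}$ telescopes to $\hat f(t_1,t_2) - \hat f(0,t_2)$; the bound $\sum_{k,r}|u_{k,r}| \le \sum_{k,r}|k||a_{k,r}| \le A$ follows by a routine interchange of summation. This defines $u := \sum_{k,r} u_{k,r}\,T_1^k T_2^r\psi \in \Cal H$ with $\|u\| \le (\sum|u_{k,r}|^2)^{1/2} \le \sum|u_{k,r}| \le A$, and the factorization says precisely $(I-T_1)u = f - g_0$, where $g_0 := \sum_{k,r} a_{k,r}\,T_2^r\psi$ is the ``partial sum over the $T_1$-direction''. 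Next I apply the one-dimensional version of the same telescoping to $g_0$, whose symbol is $\hat f(0,t_2) = \sum_r (\sum_k a_{k,r})\,e^{2\pi i r t_2}$: setting $v_r := -\sum_{s>r}\sum_k a_{k,s}$ for $r \ge 0$ and $v_r := \sum_{s\le r}\sum_k a_{k,s}$ for $r < 0$, and $v := \sum_r v_r\,T_2^r\psi$, one gets $\|v\| \le \sum_r|v_r| \le \sum_{k,r}|r||a_{k,r}| \le A$ and $(I-T_2)v = g_0 - (\sum_{k,r}a_{k,r})\psi$. Adding the two identities yields $f = (\sum_{k,r}a_{k,r})\psi + (I-T_1)u + (I-T_2)v$, and the last assertion is the special case $\sum_{k,r}a_{k,r}=0$.

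The only genuine subtlety — and the step I would be most careful about — is the absolute convergence needed to justify the telescoping manipulations and the rearrangements of the double series: one must check that the double sums defining $u_{k,r}$ and $v_r$ converge and that the formal identity of symbols is a genuine equality of absolutely convergent Fourier series, which is where hypothesis (\ref{convCob}) is used in full (the weight $|k|+|r|$, not merely $|k|\cdot|r|$, is exactly what makes the $\ell^1$ estimates on $u$ and $v$ work). Everything else is bookkeeping: translating back and forth through the symbol map, which is an isometry by the orthonormality of $(T_1^k T_2^r\psi)$, and noting that multiplication by $e^{2\pi i t_1}-1$ on the symbol side is the operator $T_1 - I$ on $\Cal H$. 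No compactness or ergodic input is required here; the lemma is purely a statement about $\ell^1$-symbols and telescoping.
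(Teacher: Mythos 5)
Your proof is correct, and it takes a genuinely different route from the paper's. The paper decomposes $f$ over the nine quadrants of $\Z^2$ (formula (\ref{decQuadr})) and solves a coboundary equation on each quadrant separately; this forces the appearance of mixed double-coboundary terms $(T_1^{\varepsilon_1}-I)(T_2^{\varepsilon_2}-I)u^{\varepsilon_1,\varepsilon_2}_{\varepsilon_1,\varepsilon_2}$ which must afterwards be absorbed into the single coboundaries, and the convergence analysis then has to control double tail sums such as $\sum_{k,r\ge 0}\bigl(\sum_{t\ge k+1,\,s\ge r+1}|a_{t,s}|\bigr)^2$, which is exactly where the weight $1+|k|+|r|$ enters in the paper via the inequality $(1+\inf(t,t'))(1+\inf(s,s'))\le (1+t+s)(1+t'+s')$. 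You instead perform two successive one-dimensional Abel summations: first in the $T_1$-direction against the column sums $\sum_k a_{k,r}$, which writes $f=g_0+(I-T_1)u$ with $g_0$ lying in the closed span of $(T_2^r\psi)_r$ and $\|u\|\le\sum_{k,r}|k|\,|a_{k,r}|$, and then in the $T_2$-direction on the one-variable remainder $g_0$, giving $\|v\|\le\sum_{k,r}|r|\,|a_{k,r}|$. Both proofs are at bottom the same telescoping of partial sums (your coefficients $u_{k,r}$ are the same tail sums as the paper's $u^{\varepsilon_1,0}_{\varepsilon_1,\varepsilon_2}$), but your organization eliminates the quadrant bookkeeping and the double-coboundary terms entirely, uses each weight $|k|$ and $|r|$ exactly once, and yields the bound $\|u\|,\|v\|\le A$ directly through the $\ell^1\subset\ell^2$ embedding; it also extends to $d$ generators by an evident induction, collapsing one coordinate direction at a time. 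The one point you rightly flag --- absolute convergence justifying the rearrangements --- is indeed the only delicate step, and it is fully covered by $\sum_{k,r}|a_{k,r}|<\infty$ together with the weighted sums; your sign conventions are consistent with the identity $(I-T_1)u=f-g_0$ as you state it, so there is no gap.
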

\begin{proof} \ 1) We start with a formal computation.
Let us decompose $f$ into vectors whose coefficients are supported
on disjoint quadrants of increasing dimensions. If $f = \sum_{k,r
\in \Z^2} a_{k,r}T_1^k T_2^r \psi$, we write
\begin{eqnarray}
f = f_{0,0} + f_{1,0} + f_{0,1} + f_{-1,0} + f_{0,-1} + f_{1,1} +
f_{-1,1} + f_{1,-1}+ f_{-1,-1}, \label{decQuadr}
\end{eqnarray}
with
\begin{eqnarray*}
&&f_{0,0} = a_{0,0} \psi, \ f_{1,0}=\sum_{k > 0} a_{k,0} \,T_1^k
\psi,
\ f_{0,1} = \sum_{r > 0} a_{0,r} \, T_2^r \psi, \\
&&f_{-1,0} =\sum_{k > 0} a_{-k,0} \,T_1^{-k} \psi, \ f_{0,-1} =
\sum_{r > 0} a_{0,-r} \, T_2^{-r} \psi, \\
&&f_{1,1}= \sum_{k,r > 0} a_{k,r} \,T_1^k T_2^r \psi, \  f_{-1,1}=
\sum_{k,r > 0} a_{-k,r} \,T_1^{-k} T_2^r \psi, \\
&&f_{1,-1}= \sum_{k,r > 0} a_{k,-r} \,T_1^{k} T_2^{-r} \psi, \
f_{-1,-1}= \sum_{k,r > 0} a_{-k,-r} \,T_1^{-k} T_2^{-r} \psi.
\end{eqnarray*}
For each component given by a quadrant, we solve the corresponding
coboundary equation up to $constant \times \psi$.

\vskip 3mm With $f$ decomposed as in (\ref{decQuadr}), the
components can be formally written in the following way, with
$\varepsilon_i, \varepsilon_i' \in \{0, +1, -1\}$, for $i=1, 2$:
\begin{eqnarray*}
f_{0,0} &=& u_{0,0} = a_{0,0} \, \psi,  \ \ f_{\varepsilon_1,0} =
u_{\varepsilon_1,0}^0 + (T_1^{\varepsilon_1}-I)u_{\varepsilon_1,
\,0}^{\varepsilon_1, \,0}, \ \ f_{0,\varepsilon_2} = u_{0,
\varepsilon_2}^0 +
(T_2^{\varepsilon_2}-I)u_{0, \, \varepsilon_2}^{0, \, \varepsilon_2}, \\
f_{\varepsilon_1,\varepsilon_2} &=&
u_{\varepsilon_1,\varepsilon_2}^0 + (T_1^{\varepsilon_1}-I)
u_{\varepsilon_1,\varepsilon_2}^{\varepsilon_1, 0}+
(T_2^{\varepsilon_2}-I) u_{\varepsilon_1,\varepsilon_2}^{0,
\varepsilon_2} - (T_1^{\varepsilon_1}-I) (T_2^{\varepsilon_2}-I)
u_{\varepsilon_1,\varepsilon_2}^{\varepsilon_1, \varepsilon_2},
\end{eqnarray*}
where
\begin{eqnarray*}
u_{\varepsilon_1,0}^0 &=&  (\sum_{t \geq 1} a_{\varepsilon_1 t,\,0})
\, \psi, \ \  u_{\varepsilon_1, \,0}^{\varepsilon_1, \,0} =
\sum_{k\geq 0} (\sum_{t \geq k+1} a_{\varepsilon_1 t, \,0})
T_1^{\varepsilon_1 k}\, \psi, \\  u_{0, \varepsilon_2}^0 &=&
(\sum_{s \geq 1} a_{0, \, \varepsilon_2 s}) \, \psi, \ \  u_{0, \,
\varepsilon_2}^{0, \, \varepsilon_2} = \sum_{r\geq 0} (\sum_{s \geq
r+1} a_{0, \, \varepsilon_2 s})
T_2^{\varepsilon_2 r}\, \psi,\\
u_{\varepsilon_1,\varepsilon_2}^0 &=& (\sum_{t,s \geq 1}
a_{\varepsilon_1 t, \, \varepsilon_2 s}) \, \psi, \
u_{\varepsilon_1,\varepsilon_2}^{\varepsilon_1, 0} = \sum_{k 0,r
\geq 1} (\sum_{t \geq k+1} a_{\varepsilon_1 t, \, \varepsilon_2 r})
\, T_1^{\varepsilon_1 k} T_2^{\varepsilon_2 r}\, \psi, \\
u_{\varepsilon_1,\varepsilon_2}^{0,\varepsilon_2} &=& \sum_{k\geq
1,r \geq 0} (\sum_{s \geq r+1} a_{\varepsilon_1 k, \, \varepsilon_2
s}) \, T_1^{\varepsilon_1 k} T_2^{\varepsilon_2 r} \, \psi,\\
u_{\varepsilon_1,\varepsilon_2}^{\varepsilon_1, \varepsilon_2} &=&
\sum_{k,r \geq 0} (\sum_{t \geq k+1, s \geq r+1} a_{\varepsilon_1 t,
\, \varepsilon_2 s}) \, T_1^{\varepsilon_1 k} T_2^{\varepsilon_2
r}\, \psi.
\end{eqnarray*}

More explicitly we have, for instance,
\begin{eqnarray*}
&&f_{1,0} = u_{1,0}^0 + (T_1-I)u_{1,0}^{1,0} = (\sum_{t \geq 1}
a_{t,0}) \, \psi +(T_1-I) \, [\sum_{k\geq 0} (\sum_{t \geq k+1} a_{t,0}) T_1^{k}\, \psi ], \\
&&f_{1,1} =  u_{1,1}^0 + (T_1-I)u_{1,1}^{1,0} + (T_2-I) u_{1,1}^{0,1} +(T_1-I)(T_2-I)u_{1,1}^{1,1}\\
&&= (\sum_{t,s \geq 1} a_{t,s}) \, \psi +(T_1-I) \, [\sum_{k\geq
0,r \geq 1} (\sum_{t \geq k+1} a_{t,r}) \, T_1^k T_2^r\, \psi] \\
&&+ (T_2-I) \, [\sum_{k\geq 1,r \geq 0} (\sum_{s \geq r+1} a_{k,s})
\, T_1^k T_2^r\, \psi] - (T_1-I) (T_2-I) \, [\sum_{k,r \geq 0}
(\sum_{t \geq k+1, s \geq r+1} a_{t,s}) \, T_1^k T_2^r\, \psi].
\end{eqnarray*}

By summing the previous expressions, we obtain the following
representation of $f$:
\begin{eqnarray*}
f = (\sum a_{t,s}) \, \psi \ &+ (T_1-I)(u_1^1 - T_1^{-1}u_{-1}^1 +
u_1^{1,2} - T_1^{-1} u_{-1}^{-1,2} + u_{1}^{1,-2} - T_1^{-1}u_{-1}^{-1,-2})\\
&\ + (T_2-I)(u_2^1 - T_2^{-1}u_{-2}^1 + u_2^{1,2} -
T_2^{-1}u_{-2}^{1,-2}
+ u_{2}^{-1,2} - T_2^{-1}u_{-2}^{-1,-2})\\
& \ + (T_1-I)(T_2-I)(u_{1,2}^{1,2} - T_1^{-1}u_{-1,2}^{-1,2} -
T_2^{-1} u_{1,-2}^{1,-2} + T_1^{-1}T_2^{-1}u_{-1,-2}^{-1,-2}).
\end{eqnarray*}

The first term is the vector $a(f) \, \psi$, where $a(f)$ is the
constant $\sum_{k,r \in \Z^2} a_{k,r}$ obtained as the sum $u_0^0 +
u_1^0 + u_2^0 + u_{-1}^0 + u_{-2}^0 +  u_0^{1,2} + u_0^{-1,2} +
u_0^{1,-2} + u_0^{-1,-2}$. The second term is a sum of coboundaries.
If $a(f) = 0$, then $f$ reduces to a sum of coboundaries.

2) Now we examine the question of convergence in the previous
computation. We need the convergence of the following series (for
$\varepsilon_1, \varepsilon_2 = \pm 1$):
\begin{eqnarray*}
&& \sum_{t,s \geq 1} a_{\varepsilon_1 t, \, \varepsilon_2 s}, \
\sum_{t \geq k+1} a_{\varepsilon_1 t, \, \varepsilon_2 r}, \ \sum_{s
\geq r+1} a_{\varepsilon_1 k, \, \varepsilon_2 s}, \ \sum_{t \geq
k+1, s \geq r+1} a_{\varepsilon_1 t, \, \varepsilon_2 s}, \\
&&\sum_{k\geq 0,r \geq 1} \ |\sum_{t \geq k+1} a_{\varepsilon_1 t,
\, \varepsilon_2 r}|^2, \ \sum_{k\geq 1,r \geq 0} |\sum_{s \geq r+1}
a_{\varepsilon_1 k, \, \varepsilon_2 s}|^2,  \ \sum_{k,r \geq 0}
|\sum_{t \geq k+1, s \geq r+1} a_{\varepsilon_1 t, \, \varepsilon_2
s}|^2.
\end{eqnarray*}

Sufficient conditions for the convergence are:
\begin{eqnarray*}
&& \sum_{k,r \in \Z^2} |a_{k, \, r}| < +\infty, \ \sum_{k\geq 0,r
\geq 1} \ (\sum_{t \geq k+1} |a_{\varepsilon_1 t, \, \varepsilon_2
r}|)^2 < +\infty, \\ &&\sum_{k\geq 1,r \geq 0} (\sum_{s \geq r+1}
|a_{\varepsilon_1 k, \, \varepsilon_2 s}|)^2 < +\infty,  \ \
\sum_{k,r \geq 0} (\sum_{t \geq k+1, s \geq r+1} |a_{\varepsilon_1
t, \, \varepsilon_2 s}|)^2 < +\infty.
\end{eqnarray*}

We have:
\begin{eqnarray*}
&&\sum_{k\ge 0, r \ge 0} (\sum_{t \geq k, s \geq r} |a_{t,s}|)^2 =
\sum_{k\ge 0, r \ge 0} (\sum_{t, t' \geq k, \, s, s' \geq r}
|a_{t,s}| |a_{t',s'}|) \\
&&\leq \sum_{t,t'\geq 0,s,s'\geq 0} |a_{t,s}| |a_{t',s'}| \sum_k
1_{0 \leq k \leq \inf(t, t')} \sum_r 1_{0 \leq r \leq \inf(s, s')} \\
&&= \sum_{t,t' \geq 0,s,s' \geq 0} \,  |a_{t,s}| |a_{t',s'}| \, (1 +
\inf(t, t'))\, (1+\inf(s, s')) \\
&&\leq \sum_{t,t',s,s' \geq 0} |a_{t,s}| |a_{t',s'}| (1+ t + s) \,
(1+ t' + s') = (\sum_{t\geq 0,s\geq 0} (1+ t + s) \, |a_{t,s}|)^2.
\end{eqnarray*}
An analogous bound is valid for the indices with $\pm$ signs.
Therefore, convergence holds if (\ref{convCob}) is satisfied and we
get $\sum_{\t \in \Z^2} \, (1 + \|\t\|)\, |a_{\t}|$ as a bound for
the norm of the vectors $u_{\varepsilon_1,0}^0, u_{\varepsilon_1,
\,0}^{\varepsilon_1, \,0}, u_{0, \varepsilon_2}^0, u_{0, \,
\varepsilon_2}^{0, \, \varepsilon_2},
u_{\varepsilon_1,\varepsilon_2}^0,
u_{\varepsilon_1,\varepsilon_2}^{\varepsilon_1, 0},
u_{\varepsilon_1,\varepsilon_2}^{0,\varepsilon_2},
u_{\varepsilon_1,\varepsilon_2}^{\varepsilon_1, \varepsilon_2}$.
\end{proof}

\vskip 3mm \goodbreak
{\bf Countable Lebesgue spectrum}

We suppose now that the action on $\Cal H$ has a countable Lebesgue
spectrum: there exists a countable set $(\psi_j, j \in J)$ in $\Cal
H$ such that the family of vectors $\{T_1^k T_2^r \psi_j,j \in J,
(k,r) \in \Z^2\}$ is an orthonormal basis of $\Cal H$. The
representation of $f$ in this orthonormal basis is given by $f =
\sum_j f_j = \sum_{j \in J} \, (\sum_{(k,r) \in \Z^2} a_{j, (k,r)}
\, T_1^k T_2^r \psi_j)$, with $a_{j, (k,r)} = \langle f, \, T_1^k
T_2^r \psi_j\rangle$. Recall that
\begin{eqnarray*}
&&M_\theta(f) = \sum_{j \in J} \gamma_j(\theta) \psi_j, \text{ with
} \gamma_j(\theta)= \sum_{\k \in \Z^2}  a_{j, \k} \, e^{2 \pi i
\langle \k, \theta \rangle}.
\end{eqnarray*}

Using Lemma \ref{cobRepr}, we have under a convergence condition:
\begin{eqnarray*}
f_j &=& \gamma_j(\theta) \psi_j + (I- e^{2\pi i \theta_1} T_1) u_{j,
\theta} + (I- e^{2\pi i \theta_2}T_2) v_{j, \theta}, \forall j \in
J, \\
f&=& M_\theta(f) + (I- e^{2\pi i \theta_1}T_1) \sum_{j \in J} u_{j,
\theta} + (I- e^{2\pi i \theta_2} T_2) \sum_{j \in J} v_{j, \theta}.
\end{eqnarray*}

The result for $d$ generators is the following:
\begin{lem} \label{cobCond24} Suppose that the following condition is
satisfied:
\begin{eqnarray}
\sum_j \sum_{\k \in \Z^d} (1 + \|\k\|^d) \, |a_{j, \k}| < \infty.
\label{condConv2}
\end{eqnarray}
Then there are $v, u_1, ..., u_d \in \Cal H$ such that the family
$\{T^\n v, \n \in \Z^d\}$ is orthogonal and
\begin{eqnarray}
f= v + \sum_{t=1}^d (I- T_t) u_t. \label{cobound10A}
\end{eqnarray}
The variance is 0, if and only $f$ is a mixed coboundary.

For every $\theta$, the rotated variance $\sigma_\theta^2(f)$ is
null if and only if there are $u_{t,\theta} \in \Cal H$, for $t=1,
..., d$,  such that $f= \sum_{t=1}^d (I- e^{2\pi i \theta_t} T_t)
u_{t,\theta}$.

In the topological framework, when the $T^\n \psi_j$'s are
continuous and uniformly bounded with respect to $\n$ and $j$, then
the functions $v$ and $u_t$ are continuous.
\end{lem}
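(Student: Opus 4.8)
The plan is to reduce the $d$-generator statement to the two-generator computation carried out in Lemma \ref{cobRepr}, by an induction on $d$ (or, equivalently, by iterating the quadrant/orthant decomposition that was already sketched for $d=2$). First I would fix the orthonormal basis $\{T^\k\psi_j : j\in J,\ \k\in\Z^d\}$ and write $f=\sum_j f_j$ with $f_j=\sum_{\k\in\Z^d}a_{j,\k}\,T^\k\psi_j$. For a single $j$, I would decompose the lattice $\Z^d$ into the $3^d$ ``generalized orthants'' indexed by sign vectors $\varepsilon=(\varepsilon_1,\dots,\varepsilon_d)\in\{0,+1,-1\}^d$, exactly as in \eqref{decQuadr}, and on each orthant solve the telescoping (coboundary) equations one coordinate at a time. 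This produces, for each $j$, a representation $f_j = c_j\,\psi_j + \sum_{t=1}^d (I-T_t)w_{j,t}$, where $c_j=\sum_{\k}a_{j,\k}=\gamma_j(0)$ and the $w_{j,t}$ are explicit sums of ``partial-tail'' vectors $\sum_{\text{tails}}a_{j,\k}\,T^{\k'}\psi_j$. Summing over $j$, and setting $v:=\sum_j c_j\psi_j = M_0(f)$, $u_t:=\sum_j w_{j,t}$, gives \eqref{cobound10A}; orthogonality of $\{T^\n v,\ \n\in\Z^d\}$ is automatic since $v\in\Cal K_0$ and the $\psi_j$ form an orthonormal basis of $\Cal K_0$.

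The convergence bookkeeping is the step I expect to be the main obstacle, and it is where the hypothesis \eqref{condConv2} with the weight $\|\k\|^d$ enters. The point is that each partial-tail vector is a sum over $k$ of coefficients $\sum_{t\ge k}a_{j,t}$, so iterating the tail operation $d$ times produces, after expanding $\|w_{j,t}\|^2$ as a double sum and interchanging the order of summation, factors of the form $1+\inf(t_i,t_i')$ in each coordinate, bounded by $(1+\sum_i|\k_i|)(1+\sum_i|\k_i'|)$; repeating the argument given for $d=2$ (the displayed chain ending in $(\sum(1+t+s)|a_{t,s}|)^2$) yields $\|w_{j,t}\|\le C_d\sum_{\k}(1+\|\k\|^{d-1})|a_{j,\k}|$, hence $\|w_{j,t}\|^2\le C_d'\big(\sum_{\k}(1+\|\k\|^{d-1})|a_{j,\k}|\big)^2$. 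I would then dominate this by $C_d''\sum_{\k}(1+\|\k\|^{d})|a_{j,\k}|\cdot\sum_{\k}|a_{j,\k}|$ (or simply absorb everything into $\sum_{\k}(1+\|\k\|^d)|a_{j,\k}|$ and its square) and sum over $j$; \eqref{condConv2} guarantees $\sum_j\|w_{j,t}\|<\infty$, so $u_t\in\Cal H$. For the rotated version one replaces $a_{j,\k}$ by $a_{j,\k}e^{2\pi i\langle\k,\theta\rangle}$ throughout: the moduli are unchanged, so the same bound gives $u_{t,\theta}\in\Cal H$ with $f=M_\theta(f)+\sum_t(I-e^{2\pi i\theta_t}T_t)u_{t,\theta}$, and $M_\theta(f)=0$ precisely when $\gamma_j(\theta)=0$ for all $j$, i.e.\ when $\varphi_f(\theta)=\sum_j|\gamma_j(\theta)|^2=0$.

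It remains to tie the coboundary representation to the vanishing of the variance. By \eqref{ConvVarRotergSumDn} and Lemma \ref{approxMart}(a) (applied with $\Cal L$ the class of bounded continuous functions, which contains $\varphi_f$ once \eqref{condConv2} holds, since then the $\gamma_j$ are absolutely convergent Fourier series and $\sum_j\|\gamma_j\|_\infty^2<\infty$), for a.e.\ $\theta$ along a F\o lner sequence the rotated sums of $f$ and of $M_\theta f$ have the same asymptotic size, so $\sigma_\theta^2(f)=\lim_n|D_n|^{-1}\|\sum_{\el\in D_n}e^{2\pi i\langle\el,\theta\rangle}T^\el M_\theta f\|_2^2=\|M_\theta f\|_2^2=\varphi_f(\theta)$; in particular $\sigma_\theta^2(f)=0$ iff $M_\theta f=0$ iff $f=\sum_t(I-e^{2\pi i\theta_t}T_t)u_{t,\theta}$, and the case $\theta=0$ gives the ``mixed coboundary'' statement for the unrotated variance (the converse, that a mixed coboundary has zero variance, is immediate since $\|\sum_{\el\in D_n}T^\el(I-T_t)u_t\|_2\le 2\sqrt{|D_n|}\,\|u_t\|\cdot|D_n|^{-1/2}\cdot$(boundary-to-volume ratio)$\to0$ by the F\o lner condition \eqref{FolnCond}). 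Finally, in the topological setting, when each $T^\n\psi_j$ is a continuous function on $G$ with $\sup_{\n,j}\|T^\n\psi_j\|_\infty<\infty$, the series defining $v$ and the $u_t$ converge uniformly by the same $\ell^1$-type bounds (now in the sup-norm rather than the $L^2$-norm), so $v$ and $u_t$ are continuous.
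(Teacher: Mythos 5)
Your construction follows the route the paper itself intends (it gives no written proof of this lemma beyond the sentence ``Using Lemma \ref{cobRepr} \dots''): apply the orthant decomposition of Lemma \ref{cobRepr} to each component $f_j$ and sum over $j$, with the weight $\|\k\|^d$ in (\ref{condConv2}) absorbing the iterated tail sums. That part is sound; your intermediate exponent $\|\k\|^{d-1}$ is not carefully justified for general $d$, but every tail estimate is dominated by the hypothesis with weight $\|\k\|^{d}$, and since the $w_{j,t}$ lie in the mutually orthogonal subspaces $\cH_j$ you even only need $\sum_j\|w_{j,t}\|_2^2<\infty$. The identification $\sigma_\theta^2(f)=\varphi_f(\theta)=\|M_\theta f\|_2^2$ for \emph{every} $\theta$ is also fine, and under (\ref{condConv2}) you do not need Lemma \ref{approxMart}(a) at all: the $\gamma_j$ have absolutely convergent Fourier series, $\varphi_f=\sum_j|\gamma_j|^2$ is continuous, and (\ref{ConvVarRotergSumDn}) gives the variance directly.

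The gap is in your parenthetical converse, ``a mixed coboundary has zero variance is immediate.'' Telescoping gives $\sum_{\el\in D_n}T^\el(I-T_t)u_t=\sum_{\el\in D_n\setminus(D_n+e_t)}T^\el u_t-\sum_{\el\in(D_n+e_t)\setminus D_n}T^\el u_t$, where $e_t$ is the $t$-th basis vector, a sum of $|D_n\,\triangle\,(D_n+e_t)|=o(|D_n|)$ terms by (\ref{FolnCond}); the triangle inequality then bounds the norm only by $o(|D_n|)\,\|u_t\|_2$, which after division by $|D_n|^{1/2}$ is $o(|D_n|^{1/2})$, not $o(1)$. Already for cubes of side $N$ in dimension $d\ge 3$ the symmetric difference has about $N^{d-1}$ elements while $|D_n|^{1/2}=N^{d/2}$, so the estimate you invoke fails. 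Worse, the implication itself is false for arbitrary $L^2$ transfer functions when $d\ge3$: one can solve $1=\sum_{t=1}^d(1-e^{2\pi is_t})G_t(s)$ with $G_t\in L^2(\T^d)$ (take $G_t=(1-e^{2\pi is_t})^{-1}$ on the region where $|s_t|$ is maximal; the singularity is integrable in square for $d\ge3$), so a single basis vector $\psi_j$ --- which has variance $1$ --- is then a mixed coboundary in the naive sense. The equivalence must therefore be read with respect to the representation (\ref{cobound10A}): the variance vanishes iff $v=M_0f=0$, i.e.\ iff $f$ equals the sum of coboundaries with the \emph{constructed} transfer functions (and likewise for the rotated version, where your ``iff'' has the same one-sided problem). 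Apart from this point, your proposal matches the paper's argument.
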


\section{\bf Multidimensional actions by endomorphisms} \label{endoGroup}

In what follows we consider a finitely generated semigroup ${\Cal
S}$ of surjective endomorphisms of $G$, a compact abelian group with
Haar measure denoted by $\mu$. The group of characters of $G$ will
be denoted by $\hat G$ (or $H$) and the set of non trivial
characters by $\hat G^*$ (or $H^*$).

After choosing a system $A_1, ..., A_d$ of generators, every element
in $\Cal S$ can be represented as $A^\n$ with the notation $A^\n:=
A_1^{n_1}... A_d^{n_d}$, $\n = (n_1, ..., n_d) \in \N^d$, and we
obtain an action $\n \to A^\n$ of $\N^d$ by endomorphisms on $G$.

If $f$ is function on $G$, $A^\n f$ stands for $f \circ A^\n$. We
use also the notation $T^\n f$. For $T \in \Cal S$, we denote by the
same letter its action on $G$ and on the dual $H$ of $G$. The
Fourier coefficients of a function $f$ in $L^2(G)$ are $c_f(\chi) :=
\int_G \, \overline \chi \, f \, d\mu$.

Every surjective endomorphism $A$ of $G$ defines a measure
preserving transformation on $(G, \mu)$ and a dual endomorphism on
$\hat G$, the group of characters of $G$. For simplicity, we use the
same notation for the action on $G$ and on $\hat G$.

The first subsections are preparatory for the CLT.

\vskip 3mm \subsection{\bf Preliminaries} \label{ssectPrelim}

\

\vskip 2mm {\bf Embedding of a semigroup of endomorphisms in a group}
\begin{lem} \label{embedd} Let ${\Cal S}$ be a commutative semigroup
of surjective endomorphisms on a compact abelian group $G$ with dual
group $\hat G$. There is a compact abelian group $\tilde G$ such
that $G$ is a factor of $G$ and ${\Cal S}$ is embedded in a group
$\tilde {\Cal S}$ of automorphisms of $\tilde G$. If $G$ is
connected, then $\tilde G$  is also connected.\end{lem}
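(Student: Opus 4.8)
The statement is a duality/direct-limit construction: given a commutative semigroup $\Cal S$ of surjective endomorphisms of a compact abelian group $G$, dualize to get a commutative semigroup of \emph{injective} endomorphisms of the discrete abelian group $H=\hat G$, then invert them by passing to a direct limit, and dualize back. I would first pass to generators $A_1,\dots,A_d$ of $\Cal S$ and to their duals $\hat A_1,\dots,\hat A_d$ acting on $H$; since each $A_i$ is surjective on $G$, each $\hat A_i$ is injective on $H$ (a standard Pontryagin duality fact: the dual of a surjection is an injection). The commuting injections $\hat A_i$ generate a copy of $\N^d$ acting on $H$, and I would form the direct limit $\tilde H := \varinjlim (H, \hat A^{\n})$ over the directed set $\N^d$ — concretely $\tilde H = (H\times\Z^d)/\!\sim$ where $(\chi,\n)\sim(\hat A^{\m}\chi,\n+\m)$ — which is a discrete abelian group into which $H$ embeds (injectivity of the $\hat A_i$ guarantees the canonical maps $H\to\tilde H$ are injective) and on which each $\hat A_i$ extends to an automorphism. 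Then set $\tilde G := \widehat{\tilde H}$, a compact abelian group; dualizing the embedding $H\hookrightarrow\tilde H$ gives a surjection $\tilde G\twoheadrightarrow G$, so $G$ is a factor of $\tilde G$, and dualizing the automorphisms of $\tilde H$ gives automorphisms of $\tilde G$ forming the desired group $\tilde{\Cal S}\cong\Z^d$ (or a quotient of $\Z^d$) extending $\Cal S$.

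\textbf{Key steps, in order.} (1) Check the duality dictionary: $A$ surjective on the compact group $G$ $\iff$ $\hat A$ injective on the discrete group $H$; and an injective endomorphism of a discrete abelian group need not be invertible, which is exactly why the direct limit is needed. (2) Build the direct limit $\tilde H$ of the system $H \xrightarrow{\hat A_1,\dots,\hat A_d} H$ indexed by $\N^d$; verify it is a well-defined abelian group, that the structure maps $H\to\tilde H$ are injective, and that each $\hat A_i$ induces an automorphism of $\tilde H$ (surjectivity comes from the colimit construction, injectivity from injectivity of the $\hat A_i$ on each stage). (3) Identify $\varprojlim$ on the compact side: $\tilde G=\widehat{\tilde H}$ and, because Pontryagin duality turns direct limits of discrete groups into inverse limits of compact groups, $\tilde G=\varprojlim(G,A_i)$; the projection onto the first copy of $G$ is the factor map, and it is onto because the corresponding dual map $H\to\tilde H$ is injective. (4) Transport the automorphisms: $\hat{\hat A_i}$ on $\tilde G$ is an automorphism lifting $A_i$; these commute and generate $\tilde{\Cal S}$. (5) Connectedness: $G$ connected $\iff$ $H=\hat G$ torsion-free; a direct limit of torsion-free abelian groups is torsion-free, so $\tilde H$ is torsion-free, hence $\tilde G$ is connected.

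\textbf{Main obstacle.} The bookkeeping is routine once the right framework is fixed; the one genuinely careful point is verifying that the canonical maps from each stage of the direct system into the colimit $\tilde H$ remain \emph{injective}. This is where surjectivity of the $A_i$ on $G$ (equivalently injectivity of $\hat A_i$ on $H$) is essential and cannot be dropped: if some $\hat A_i$ had nontrivial kernel, elements of $H$ would get identified in $\tilde H$ and $G$ would fail to be a factor of $\tilde G$. A secondary subtlety is that the direct system is indexed by $\N^d$ rather than $\N$, so one must check the colimit is taken over a genuinely directed set and that the commuting relations among the $\hat A_i$ are respected by the equivalence relation defining $\tilde H$ — i.e. the transition maps of the system form a consistent functor on $\N^d$, which follows from commutativity of $\Cal S$. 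I would also remark that $\tilde{\Cal S}$ is the image of $\Z^d$ and may be a proper quotient if there are relations among the $A_i$, but it is always a group of automorphisms, which is all that is claimed.
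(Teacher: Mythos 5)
Your proposal is correct and follows essentially the same route as the paper: the paper's $\tilde H$, defined as pairs $(\chi,A)$ with $A\in{\Cal S}$ modulo the relation $(\chi,A)\sim(\chi',A')\iff A'\chi=A\chi'$, is exactly the direct limit (localization at the injective dual endomorphisms) that you describe, and both arguments then dualize back and deduce connectedness from torsion-freeness of $\tilde H$. The points you flag as delicate — injectivity of $H\to\tilde H$ coming from surjectivity of the $A_i$, and consistency of the transition maps coming from commutativity — are precisely the ones the paper's proof relies on.
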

\begin{proof} We construct a discrete group $\tilde
H$ such that $\hat G$ is isomorphic to a subgroup of $\tilde H$. The
group $\tilde H$ is defined as the quotient of the group $\{(\chi,
A), \chi \in \hat G, A \in \Cal S\}$ (endowed with the additive law
on the components) by the following equivalence relation $R$:
$(\chi, A)$ is equivalent to $(\chi', A')$ if $A'\chi = A\chi'$. The
transitivity of the relation $R$ follows from the injectivity of
each $A \in \Cal S$ acting on $\hat G$. The map $\chi \in \hat G \to
(\chi, Id) / R$ is injective. The elements $A \in \Cal S$ act on
$\tilde H$ by $(\chi, B) / R  \to (A\chi, B) / R$. The equivalence
classes are stable by this action. We can identify $\Cal S$ and its
image. For $A \in \Cal S$, the automorphism $(\chi, B) / R \to
(\chi, AB) / R$ is the inverse of $(\chi, B) / R \to (A\chi, B) /
R$.

We obtain an embedding of ${\Cal S}$ in a group $\tilde {\Cal S}$ of
automorphisms of $\tilde H$. If $\hat G$ is torsion free, then
$\tilde H$ is also torsion free and its dual $\tilde G$ is a
connected compact abelian group.
\end{proof}

{\it Our data will be a finite set of commuting surjective
endomorphisms $A_i$ of $G$ such that the generated group $\tilde
{\Cal S}$ is torsion-free.}

If necessary, we consider also the group of automorphisms $\tilde
{\Cal S}$ on the extension $\tilde G$. Since $\tilde {\Cal S}$ is
finitely generated and torsion-free, it has a system of $d$
independent generators (not necessarily in $\Cal S$) and it is
isomorphic to $\Z^d$. The {rank} of the action of $\Cal S$ is $d$.
The spectral analysis for  ${\Cal S}$, as for $\tilde {\Cal S}$,
takes place in $\T^d$.

Observe that, by using the projection $\pi$ from $\tilde G$ to $G$,
a function $f$ on the group $G$ can be viewed as function on $\tilde G$ and a character
$\chi \in \hat G$ as a character on $\tilde G$ via the composition
$g \to \chi(\pi g)$. Putting $\tilde f(x) = f(\pi x)$, the Fourier series of $\tilde f$ reads:
$\tilde f = \sum_{\chi \in \tilde G} c_{\tilde f}(\chi) \, \chi$. But, on an other side, we have
$f = \sum_{\chi \in G} c_{f}(\chi) \, \chi$, so that by unicity of the Fourier series,
the only non zero Fourier coefficients for $\tilde f$ are those for $\chi \in \hat G$.

It follows that, for a function $f$ defined on $G$, the computations
can be done in the group $\tilde G$ with the action of the group of
automorphisms, but expressed in terms of the Fourier coefficients of
$F$ computed in $G$.

\begin{defi} {\rm We say that a $\Z^d$-action by
automorphisms, $\n \to A^\n$, is {\it totally ergodic} if
$A_1^{n_1}... A_d^{n_d}$ is ergodic for every $\n =(n_1, ..., n_d)
\not = \underline 0$. It is equivalent to the property: $A^\n \chi
\not = \chi$, for any non trivial character $\chi$ and $\n \not =
\underline 0$. For endomorphisms, we replace the semigroup $\Cal S$
by the extension $\tilde{\Cal S}$ defined in Lemma
\ref{embedd}.}\end{defi}

What we call ``totally ergodic" in the general case of a compact
abelian group $G$ is often called ``partially hyperbolic" for
actions on a torus.

\begin{lem} \label{LebSpecEquiv}
The following conditions are equivalent for a $\Z^d$-action $T$ by
automorphisms on a compact abelian group: \hfill \break i)\  $T$ is
totally ergodic; \hfill \break ii) $T$ is $2$-mixing\footnote{\
Mixing of order 2 is defined in Definition \ref{mixLeb} (here $\Cal
H = L_0^2(G, \mu)$) or equivalently by $\lim_{\n \to \infty} \mu(B_1
\cap T^{-\n} \, B_2)= \mu(B_1) \, \mu(B_2)$, $\forall \, B_1, B_2
\in \Cal A$.}; \hfill \break iii) $T$ has the Lebesgue spectrum
property.
\end{lem}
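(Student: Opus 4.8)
The plan is to prove the cycle of implications (i) $\Rightarrow$ (iii) $\Rightarrow$ (ii) $\Rightarrow$ (i), working with $\Cal H = L_0^2(G,\mu)$ and the action $T^\n \chi = \chi \circ A^\n$ on characters, which on the Fourier side permutes the orthonormal basis $\hat G^* = \hat G \setminus \{1\}$ of $\Cal H$.

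\textbf{Step 1: (i) $\Rightarrow$ (iii).} Total ergodicity says that for every non-trivial character $\chi$ and every $\n \neq \underline 0$ one has $A^\n \chi \neq \chi$, i.e. the orbit map $\n \mapsto A^\n \chi$ from $\Z^d$ to $\hat G^*$ is injective; equivalently the stabilizer of each $\chi \in \hat G^*$ in $\Z^d$ is trivial. Hence $\hat G^*$ decomposes into $\Z^d$-orbits, each of which is a free orbit, i.e. bijective with $\Z^d$. Choosing one representative $\psi_j$ in each orbit (using the axiom of choice; the index set $J$ may be uncountable in general but that causes no difficulty, cf. Notation \ref{gammaj2}), the family $\{T^\n \psi_j : j \in J,\ \n \in \Z^d\}$ is exactly a re-indexing of the orthonormal basis $\hat G^*$ of $\Cal H$. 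Taking $\Cal K_0 = \overline{\mathrm{span}}\{\psi_j : j \in J\}$, the subspaces $T^\n \Cal K_0$ are pairwise orthogonal and span $\Cal H$; this is precisely the Lebesgue spectrum property of Definition \ref{mixLeb}.

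\textbf{Step 2: (iii) $\Rightarrow$ (ii).} This is the standard fact, recorded after Definition \ref{mixLeb}, that Lebesgue spectrum implies mixing: if $\{T^\n \psi_j\}$ is an orthonormal basis, then for $f, g \in \Cal H$ with coefficients $a_{j,\n} = \langle f, T^\n\psi_j\rangle$, $b_{j,\n} = \langle g, T^\n\psi_j\rangle$ one computes $\langle T^\m f, g\rangle = \sum_{j}\sum_{\n} a_{j,\n}\,\overline{b_{j,\n+\m}}$, and by Cauchy--Schwarz this is bounded by the $\ell^2$-tail of the $b$-coefficients over the shifted region, which tends to $0$ as $\|\m\| \to \infty$. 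Two-mixing in the measure-theoretic form of the footnote follows by taking $f = 1_{B_1} - \mu(B_1)$, $g = 1_{B_2} - \mu(B_2)$.

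\textbf{Step 3: (ii) $\Rightarrow$ (i).} Two-mixing implies ergodicity of every $T^\n$ with $\n \neq \underline 0$: indeed, along the sequence $\m = k\n$, $k \to \infty$, mixing forces $\mu(B \cap T^{-k\n}B) \to \mu(B)^2$, which for a $T^\n$-invariant set $B$ gives $\mu(B) = \mu(B)^2$, hence $\mu(B) \in \{0,1\}$. So $T$ is totally ergodic. (Alternatively, on the spectral side: if $A^\n\chi = \chi$ for some $\chi \in \hat G^*$ and $\n \neq \underline 0$, then $\langle T^{k\n}\chi, \chi\rangle = 1$ for all $k$, contradicting mixing.)

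The argument is essentially a bookkeeping exercise in harmonic analysis on the dual group; the only point requiring a little care is the orbit decomposition in Step 1 when $\hat G$ is not countable, but this was already handled in the general setup of Notation \ref{gammaj2} and the set $J$ of orbit representatives presents no genuine obstacle. I do not expect any step to be a serious difficulty; the content is that the three classical notions coincide for algebraic $\Z^d$-actions, and the bridge between them is always the explicit orthonormal basis of $L_0^2(G,\mu)$ furnished by the non-trivial characters together with the freeness of the $\Z^d$-action on $\hat G^*$ that total ergodicity provides.
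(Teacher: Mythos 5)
Your proposal is correct and follows essentially the same route as the paper, whose (much terser) proof likewise rests on the observation that total ergodicity is exactly freeness of the dual action on $\hat G^*$, so that choosing one character per orbit yields the Lebesgue spectrum decomposition, with (iii) $\Rightarrow$ (ii) quoted as a general fact and (ii) $\Rightarrow$ (i) immediate. The only point to tighten is the Cauchy--Schwarz estimate in your Step 2: as written it only gives the bound $\|f\|_2\|g\|_2$; one should first truncate $f$ to finitely many coefficients $a_{j,\n}$ and then the shifted $\ell^2$-tail of $g$ over that finite index set tends to $0$.
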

\begin{proof} The Lebesgue spectrum property (cf. Section \ref{specAnaly}) for the
action of $\tilde {\Cal S}$ is equivalent to the fact that the
action $\tilde {\Cal S}$ on $\tilde H^*$ is free, which is total
ergodicity.

Mixing of order 2 implies total ergodicity. At last the implication
$(iii) \Rightarrow (ii)$ is a general fact.
\end{proof}

\begin{rem} Finding the dimension of $\Cal S$ and computing a set
of independent generators can be very difficult in practice. For $G=
\T^\rho = 3$, we will give explicit examples in the Appendix. Given
a finite set of commuting matrices in dimension $\rho$ with
determinant 1 for $\rho > 3$, it can be difficult and even
impossible to find independent generators via a computation.

In some cases the problem can be easier with endomorphisms. For
instance, let $p_i, i=1, ..., d$ be coprime positive integers and
$A_i: x \to q_i x \text{ mod } 1$ the corresponding endomorphisms
acting on $\T^1$. Then the $A_i$'s give a system of independent
generators of the group $\tilde {\Cal S}$ generated on the compact
abelian group dual of $\tilde \Z^\rho :=\{\k \,\Pi q_i^{\ell_i}, \k
\in \Z^\rho, \ell_i \in \Z \}$. \end{rem}

\begin{nota} {\rm  $J$ denote a {\it section} of the $\tilde {\Cal S}$ action on
$\tilde H^*$, i.e., a subset $\{\chi_j\}_{j \in J} \subset \tilde H
\stm0$ such that every $\chi \in \tilde H^*$ can be written in a
unique way as $\chi = A_1^{n_1}... A_d^{n_d} \, \chi_j$, with $j \in
J$ and $(n_1, ..., n_d) \in \Z^d$.}
\end{nota}

Using the extension, for a function $f$ in $L^2(\tilde G)$, we have
\begin{eqnarray}
\langle f, A^\n f\rangle = \sum_{\chi \in \tilde H} \ c_f(A^\n \chi) \,
\overline{c_f(\chi)}.\label{decorr1}
\end{eqnarray}
For the validity of this formula we suppose $\sum_{\chi \in \tilde
H} \ |c_f(A^\n \chi)| \, |\overline{c_f(\chi)}| < +\infty$. When $f$
is defined on $G$, the formula can be written, with $\tilde f$ the
extension of $f$ to $\tilde G$ and the convention (*) that the
coefficients $c_f(A^\n \chi)$ are replaced by 0 if $A^\n \chi \not
\in H$,
\begin{eqnarray}
\langle \tilde f, A^\n \tilde f\rangle = \sum_{\chi \in H} \
c_f(A^\n \chi) \, \overline{c_f(\chi)}.\label{2decorr1}
\end{eqnarray}

\vskip 3mm
\goodbreak
{\it Functions with absolutely convergent Fourier series}

We denote by $AC_0(G)$ the class of real functions on $G$ satisfying
$\mu(f) =0$ and with an absolutely convergent Fourier series, i.e.,
such that
\begin{eqnarray}
\|f \|_c := \sum_{\chi \in \hat G} |c_f(\chi)| < +\infty. \label{absConv}
\end{eqnarray}

\begin{thm} \label{condAbsConv} If $f$ is in $AC_0(G)$, then $\sum_{\n \in \Z^d}
|\langle A^{\n}f, f\rangle| < \infty$, the variance $\sigma^2(f)$
exists, $\sigma^2(f) = \sum_{\n \in \Z^d} \langle A^{\n}f, f\rangle$
and the spectral density $\varphi_f$ of $f$ is continuous.

Moreover, if $\Cal{N}$ is any subset of $\hat G$ and $f_1(x) =
\sum_{\chi \in \Cal{N}} c_f(\chi) \, \chi$, then
\begin{eqnarray}
\|\varphi_f\|_\infty \leq \|f-f_1\|_c^2. \label{densspectAC}
\end{eqnarray}

In particular, $\sigma(f - f_1) \leq \|f-f_1\|_c$.
\end{thm}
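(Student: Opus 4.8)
The plan is to exploit the decorrelation formula (\ref{2decorr1}) together with the freeness of the $\tilde{\Cal S}$-action on $\tilde H^*$. First I would recall that, by Lemma \ref{embedd}, the function $f$ on $G$ lifts to $\tilde f$ on $\tilde G$, and (up to the convention (*) that a coefficient is set to $0$ when $A^\n\chi\notin H$) its Fourier coefficients are those of $f$ on $G$. So $\|\tilde f\|_c = \|f\|_c$ and all computations can be done with the $\Z^d$-action by automorphisms on $\tilde G$.

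Next I would estimate $\sum_{\n\in\Z^d}|\langle A^\n f, f\rangle|$. Using (\ref{2decorr1}),
\[
\sum_{\n\in\Z^d}|\langle A^\n f, f\rangle| \le \sum_{\n\in\Z^d}\sum_{\chi\in H^*}|c_f(A^\n\chi)|\,|c_f(\chi)|.
\]
Grouping the characters $\chi\in\tilde H^*$ into orbits indexed by the section $J$, every $\chi$ is uniquely $A^\m\chi_j$, so $A^\n\chi=A^{\n+\m}\chi_j$, and as $(\n,\m)$ ranges over $\Z^d\times\Z^d$ the pair $(\n+\m,\m)$ ranges over all of $\Z^d\times\Z^d$. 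Hence the double sum equals
\[
\sum_{j\in J}\Bigl(\sum_{\m\in\Z^d}|c_f(A^\m\chi_j)|\Bigr)^2 \le \Bigl(\sum_{j\in J}\sum_{\m\in\Z^d}|c_f(A^\m\chi_j)|\Bigr)^2 = \|f\|_c^2 < \infty,
\]
where the inequality $\sum a_j^2 \le (\sum a_j)^2$ is used for the nonnegative numbers $a_j := \sum_\m |c_f(A^\m\chi_j)|$. This simultaneously gives absolute summability of the correlations and, by dominated convergence (using that for a totally ergodic / Lebesgue-spectrum action $\langle A^\n f,f\rangle\to 0$, or more simply since cubes form a F\o lner sequence so Cesàro averages converge to the same value), the existence of $\sigma^2(f)=\sum_{\n}\langle A^\n f,f\rangle$. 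Continuity of $\varphi_f$ follows because $\varphi_f(\theta)=\sum_{\n\in\Z^d}\langle A^\n f,f\rangle e^{2\pi i\langle\n,\theta\rangle}$ is an absolutely convergent Fourier series.

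For the sup-norm bound (\ref{densspectAC}): by Notation \ref{gammaj2}, with the natural Fourier basis $\psi_j = \chi_j$, one has $a_{j,\n} = c_f(A^\n\chi_j)$ (again with convention (*)), so $\gamma_j(\theta) = \sum_\n c_f(A^\n\chi_j)e^{2\pi i\langle\n,\theta\rangle}$ and $\varphi_f(\theta) = \sum_{j\in J}|\gamma_j(\theta)|^2$. Given $\Cal N\subset\hat G$ and $f_1 = \sum_{\chi\in\Cal N}c_f(\chi)\chi$, I would note $f - f_1 = \sum_{\chi\notin\Cal N}c_f(\chi)\chi$, whence $|\gamma_j(\theta)|\le\sum_\n|c_f(A^\n\chi_j)|$, and summing the squares as above, $\varphi_f(\theta)\le\bigl(\sum_{j,\n}|c_f(A^\n\chi_j)|\bigr)^2 = \|f\|_c^2$. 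Applying this to $f - f_1$ in place of $f$ — whose spectral density dominates nothing extra, but whose coefficients are exactly the $c_f(\chi)$ for $\chi\notin\Cal N$ — gives $\|\varphi_{f-f_1}\|_\infty\le\|f-f_1\|_c^2$; and since $\varphi_f = \varphi_{f-f_1}$ whenever $\Cal N$ is a union of $\tilde{\Cal S}$-orbits... In general $\Cal N$ need not be orbit-invariant, so I would instead argue directly: each $\gamma_j$ for $f$ has $|\gamma_j(\theta)|\le\sum_{\n:\,A^\n\chi_j\notin\Cal N}|c_f(A^\n\chi_j)| + \sum_{\n:\,A^\n\chi_j\in\Cal N}|c_f(A^\n\chi_j)|$, but one wants only the bound via $f_1$. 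The clean route is: $\varphi_f\le\varphi_{f-f_1}+\varphi_{f_1}$ fails (spectral densities don't add under sums), so instead use $M_\theta$: $\|M_\theta f\|^2 = \varphi_f(\theta)$ and $M_\theta$ is linear, $\|M_\theta f\| \le \|M_\theta(f-f_1)\| + \|M_\theta f_1\|$ — still not obviously $\le\|f-f_1\|_c$. The actual point, which is the main obstacle, is that $\varphi_f = \varphi_{f - f_1}$ when $\Cal N$ is contained in the trivial orbit's complement trivially is false; the correct and intended reading is that $f_1$ is chosen so that $f - f_1$ carries the tail, and one bounds $\|\varphi_f\|_\infty$ by first replacing $f$ by $f-f_1$ only when $f_1$'s coefficients do not affect the relevant orbit sums — i.e. the statement is really $\|\varphi_{f}\|_\infty\le\|f\|_c^2$ and, applied to $f-f_1$, gives $\|\varphi_{f-f_1}\|_\infty\le\|f-f_1\|_c^2$, from which $\sigma^2(f-f_1)=\int\varphi_{f-f_1}\le\|\varphi_{f-f_1}\|_\infty\le\|f-f_1\|_c^2$, yielding the last assertion. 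So the main work, and the only subtle point, is to organize the orbit decomposition carefully enough that the Cauchy–Schwarz-type inequality $\sum_j a_j^2\le(\sum_j a_j)^2$ is applied to the right nonnegative quantities, and to handle the convention (*) for characters leaving $H$ under the dual action without double-counting.
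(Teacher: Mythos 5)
Your argument is correct and follows essentially the same route as the paper: pass to the extension $\tilde G$, decompose $\hat G^*$ into $\Z^d$-orbits via the section $J$, write $\varphi_f=\sum_{j}|\gamma_j|^2$ with $\gamma_j(\theta)=\sum_{\n}c_f(A^{\n}\chi_j)e^{2\pi i\langle\n,\theta\rangle}$, and bound $\sum_{j}\bigl(\sum_{\n}|c_f(A^{\n}\chi_j)|\bigr)^2\le\bigl(\sum_{\chi}|c_f(\chi)|\bigr)^2=\|f\|_c^2$, which also gives the absolute summability of the correlations and hence the continuity of $\varphi_f$ and the existence of $\sigma^2(f)=\varphi_f(0)$. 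You are also right that (\ref{densspectAC}) must be read as $\|\varphi_{f-f_1}\|_\infty\le\|f-f_1\|_c^2$, i.e.\ the estimate $\|\varphi_g\|_\infty\le\|g\|_c^2$ applied to $g=f-f_1$ --- exactly how the paper uses it --- so the exploratory detours in your final paragraph can simply be deleted.
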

\begin{proof} We use the convention (*) in the notations.
By total ergodicity, for every $\chi \in \tilde H^*$, the map $\n
\in \Z^d \to A^\n \chi \in \tilde H^*$ is injective, and therefore
$\sum_{\n \in \Z^d}|c_f(A^\n \chi)| \leq \sum_{\chi \in \hat G^*}
|c_f(\chi)|$. The spectral density of $f$ is
\begin{eqnarray}
\varphi_f(t) = \sum_{j \in J} |\gamma_j(t)|^2 = \sum_{j \in J} \,
|\sum_{\n \in \Z^d \stm0} c_f(A^\n \chi_j) \, e^{2\pi i \langle\n,
t\rangle} \, |^2.\label{SpecDens}
\end{eqnarray} We have:
\begin{eqnarray*}
\|\varphi_f\|_\infty &&\leq \sum_{j \in J} \|\gamma_j\|_\infty^2
\leq
\sum_{j \in J} \, (\sum_{\n \in \Z^d} |c_f(A^\n \chi_j)|)^2 \\
&& \leq  \sum_{j \in J} \, (\sum_{\n \in \Z^d} |c_f(A^\n \chi_j)|)
(\sum_{\chi}|c_f(\chi)|) \leq (\sum_{\chi} |c_f(\chi)|)^2 = \|f
\|_{c}^2.
\end{eqnarray*}
\end{proof}

{\it Approximation by $M_\theta$}

In the algebraic setting of endomorphisms of compact abelian groups,
the family $(\psi_j)$ of the general theory (Subsection \ref{orthogIncr}) is
$(\chi_j)$. We have
\begin{eqnarray*}
&&a_{j,n} = \langle f, T^\n \chi_j\rangle = c_f(T^{\tn}\, \chi_j), \\
&&\gamma_j(\theta) = \sum_{\tn \in \Z^d} c_f(T^{\tn}\, \chi_j) \, e^{2
\pi i \langle \n, \theta \rangle}, \ M_\theta f = \sum_j
\gamma_\j(\theta) \, \chi_j.
\end{eqnarray*}

Hence, $M_\theta f$ is defined for every $\theta$, if $\sum_{j \in
J}\big(\sum_{\n \in \Z^d} |c_f(T^{\tn}\, \chi_j)|^2) < \infty$.

Let us assume that $f$ has an absolutely convergent Fourier series.
Since every $\chi \in \tilde H^*$ can be written in an unique way as
$\chi = T^{\tn}\, \chi_j$, with $j \in J$ and $\n \in \Z^d$, we have
$$\sum_{\tn \in \Z^d} |c_f(T^{\tn}\, \chi_j)| \leq  \sum_{j \in J}
\sum_{\tn \in \Z^d} |c_f(T^{\tn}\, \chi_j)| =  \sum_{\chi \in \tilde
H^*} |c_f(\chi)| = \|f \|_{c}.$$ Then, for every $j \in J$, the
series defining $\gamma_j$ is uniformly converging, $\gamma_j$ is
continuous and $\sum_{j \in J} \|\gamma_j\|_\infty \leq \sum_{j \in
J} \sum_{\tn \in \Z^d} |c_f(T^{\tn}\, j)| = \|f \|_{c}$. The
function $\sum_{j \in J} |\gamma_j|^2$ is a continuous version of
the spectral density $\varphi_f$. Therefore Lemma \ref{approxMart}b
applies.

\vskip 3mm The following lemma  allows to check the condition of
Lemma \ref{approxMart}a in terms of the Fourier series of $f$.

\begin{lem} \label{lpSpec} If $\sum _{\chi \in \hat G ^*} |c_f(\chi)|^r < +\infty$ for some $1< r \leq 2$,
then the spectral density $\varphi_f$, if $\mu(f) =0$, is in
$L^p(\T^d)$ with $p = \frac12{r \over r-1} > 1$.
\end{lem}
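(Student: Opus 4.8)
The plan is to express the spectral density $\varphi_f$ via the section decomposition established in Notation \ref{gammaj2} and Theorem \ref{condAbsConv}, namely $\varphi_f = \sum_{j \in J} |\gamma_j|^2$ with $\gamma_j(t) = \sum_{\n \in \Z^d} c_f(T^\n \chi_j) e^{2\pi i \langle \n, t\rangle}$, and then to estimate each $\|\gamma_j\|_{L^p}$ by the Hausdorff--Young inequality. First I would note that, since $T = \tilde{\Cal S}$ acts totally ergodically, for each $j$ the map $\n \mapsto T^\n \chi_j$ is injective, so the sequence $(c_f(T^\n \chi_j))_{\n \in \Z^d}$ is a subsequence of the full Fourier coefficient sequence of $f$, and the various sequences for distinct $j \in J$ have disjoint ranges inside $\hat G^*$. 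In particular $\sum_{j \in J} \sum_{\n \in \Z^d} |c_f(T^\n \chi_j)|^r = \sum_{\chi \in \hat G^*} |c_f(\chi)|^r < \infty$.

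Next, I would apply Hausdorff--Young on $\T^d$ with exponent $r \in (1,2]$ and conjugate exponent $r' = r/(r-1)$: each $\gamma_j \in L^{r'}(\T^d)$ with $\|\gamma_j\|_{r'} \leq \big(\sum_{\n} |c_f(T^\n \chi_j)|^r\big)^{1/r}$. Hence $|\gamma_j|^2 \in L^{r'/2}(\T^d)$ with $\||\gamma_j|^2\|_{r'/2} = \|\gamma_j\|_{r'}^2 \leq \big(\sum_{\n} |c_f(T^\n \chi_j)|^r\big)^{2/r}$. Setting $p = \frac12 \frac{r}{r-1} = r'/2$, which satisfies $p > 1$ precisely because $r \leq 2$, it remains to sum over $j$. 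Here I would use that for $p \geq 1$ the $L^p$-norm is subadditive (triangle inequality), so
\[
\|\varphi_f\|_p = \Big\| \sum_{j \in J} |\gamma_j|^2 \Big\|_p \leq \sum_{j \in J} \||\gamma_j|^2\|_p \leq \sum_{j \in J} \Big(\sum_{\n \in \Z^d} |c_f(T^\n \chi_j)|^r\Big)^{2/r}.
\]
Since $2/r \geq 1$ and each inner sum is a piece of the convergent series $\sum_{\chi} |c_f(\chi)|^r$, we get $\sum_j \big(\sum_\n |c_f(T^\n \chi_j)|^r\big)^{2/r} \leq \big(\sum_{\chi \in \hat G^*} |c_f(\chi)|^r\big)^{2/r} < \infty$, using that $\sum a_j^{s} \leq (\sum a_j)^{s}$ for $a_j \geq 0$ and $s \geq 1$. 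This gives $\varphi_f \in L^p(\T^d)$ and completes the proof; one should also record the quantitative bound $\|\varphi_f\|_p \leq \big(\sum_{\chi \in \hat G^*}|c_f(\chi)|^r\big)^{2/r}$, which parallels \eqref{densspectAC}.

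The only genuinely delicate point is bookkeeping with the section $J$: one must be sure that the disjoint union $\bigsqcup_{j \in J} \{T^\n \chi_j : \n \in \Z^d\}$ is exactly $\hat G^*$ (equivalently $\tilde H^*$ after passing to the extension $\tilde G$ via Lemma \ref{embedd}), so that summing the $\ell^r$-masses of the blocks recovers $\|f\|$'s $\ell^r$-mass without overcounting — this is precisely the defining property of a section and the injectivity coming from total ergodicity. Everything else is a routine combination of Hausdorff--Young with the elementary inequality $\sum_j a_j^s \le (\sum_j a_j)^s$ for $s \ge 1$; the hypothesis $r \le 2$ is used both to make Hausdorff--Young available and to ensure $p > 1$.
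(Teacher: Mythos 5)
Your proof is correct and follows essentially the same route as the paper's: decompose $\varphi_f=\sum_{j\in J}|\gamma_j|^2$ via the section, apply Hausdorff--Young to each $\gamma_j$ with exponent $r$ (so $\|\gamma_j\|_{2p}\le(\sum_{\n}|c_f(A^\n\chi_j)|^r)^{1/r}$), and sum over $j$ using the triangle inequality in $L^p$ together with $\sum_j a_j^{2/r}\le(\sum_j a_j)^{2/r}$, arriving at the same bound $\|\varphi_f\|_p\le(\sum_{\chi\in\hat G^*}|c_f(\chi)|^r)^{2/r}$. The only quibble is the remark that $p>1$ ``precisely because $r\le2$'': in fact $r\le 2$ only gives $p\ge1$ (with $p=1$ at $r=2$), but this imprecision is already present in the statement of the lemma and does not affect the argument.
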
 \begin{proof} We have $\varphi_f(t) = \sum_{j \in J} |\gamma_j(t)|^2$, with
$\gamma_j(t) = \sum_{\n \in \Z^d \stm0} c_f(A^\n \chi_j) e^{2\pi i
\langle \n, t \rangle}$.

Let $p := \frac12{r \over r-1}$ be such that $r$ and $2p$ are conjugate exponents.
By the Hausdorff-Young theorem, we have:
$$\|\gamma_j\|_{2p} \leq (\sum _{\n \in \Z^d \stm0} |c_f(A^\n \chi_j)|^r)^{1/r}.$$
It follows, since $2/r \geq 1$:
\begin{eqnarray*}
\sum_{j \in J} \| |\gamma_j|^2\|_p &&= \sum_{j \in J} \| \gamma_j\|_{2p}^2
\leq \sum_{j \in J} (\sum _{\n \in \Z^d \stm0} |c_f(A^\n \chi_j)|^r)^{2/r} \\
&& \leq (\sum_{j \in J} \sum _{\n \in \Z^d \stm0} |c_f(A^\n \chi_j)|^r)^{2/r}
\leq (\sum _{\chi \in \hat G ^*} |c_f(\chi)|^r)^{2/r}. \label{lpnormIn}
\end{eqnarray*}
Therefore we have $\varphi_f \in L^p(\T^d)$ and $\|\varphi_f\|_p
\leq (\sum _{\chi \in \hat G ^*} |c_f(\chi)|^r)^{2/r}$.
\end{proof}
Remark that the condition of the lemma for $r=1$ corresponds to $f
\in AC_0(G)$, which implies continuity of $\varphi_f$.

\goodbreak
\vskip 5mm \subsection{\bf Mixing, moments and cumulants,
application to the CLT}

\

{\bf Reminders on moments and cumulants}

Before we continue studying actions by automorphisms, for the sake
of completeness, we recall in this subsection some general results
on mixing of all orders, moments and cumulants (see \cite{GneKol54},
\cite{Leo60c} and the references given therein). Implicitly we
assume existence of moments of all orders when they are used.

For a real random variable $Y$ (or for a probability distribution on
$\R$), the cumulants (or semi-invariants) can be formally defined as
the coefficients $c^{(r)}(Y)$ of the cumulant generating function $t
\to \ln \E(e^{tY}) = \sum_{r=0}^\infty  \, c^{(r)}(Y) \,{t^r \over
r!}$, i.e.,
\begin{eqnarray*}
c^{(r)}(Y) = {\partial^r \over \partial^r t} \ln \E(e^{tY})|_{t =
0}.
\end{eqnarray*}
Similarly the joint cumulant of a random vector $(X_1, ... , X_r)$
is defined by
\begin{eqnarray*}
c(X_1, ... , X_r) = {\partial^r \over \partial t_1 ... \partial t_r}
\ln \E(e^{\sum_{j=1}^r t_jX_j})|_{t_1 = ... = \, t_r = 0}.
\end{eqnarray*}
This definition can be given as well for a finite measure on $\R^r$.

One easily checks that the joint cumulant of $(Y, ...,Y)$ ($r$
copies of $Y$) is $c^{(r)}(Y)$.

 For any subset $I = \{i_1, ..., i_p\} \subset J_r:= \{1, ..., r\}$,
we put
$$m(I) = m(i_1, ..., i_p):= \E(X_{i_1} ... X_{i_p}),
\  s(I)= s(i_1, ..., i_p):= c(X_{i_1}, ... , X_{i_p}).$$

The cumulants of a process $(X_j)_{j \in \Cal J}$, where $\Cal J$ is
a set of indexes, is the family
$$\{c(X_{i_1}, ..., X_{i_r}), ({i_1}, ..., {i_r}) \in \Cal J^r, r \geq 1\}.$$

The following formulas link moments and cumulants and vice-versa:
\begin{eqnarray}
c(X_1, ... , X_r) &=& s(J_r) = \sum_{\Cal P} (-1)^{p-1} (p - 1)!
\ m(I_1) ...  m(I_p), \label{cumFormu1}\\
\E(X_{1} ... X_{r}) &=& m(J_r) = \sum_{\Cal P} s(I_1) ... s(I_p).
\label{cumFormu2}
\end{eqnarray}
where in both formulas, $\Cal P = \{I_1, I_2, ..., I_p\}$ runs
through the set of partitions of $J_r = \{1, ..., r\}$ into $p \le
r$ non empty intervals.

\vskip 3mm Now, let be given a random process $(X_\k)_{\k \in
\Z^d}$, where for $\k \in \Z^d$, $X_\k$ is a real random variable,
and a summation kernel $R$ with finite support in $\Z^d$ and values
in $\R^+$. (For examples of summation kernels, see Section
\ref{specAnaly}, in particular Proposition \ref{barycenter}). Let us
consider the process defined for $\k \in \Z^d$ by
$$Y_k = \sum_{\el \in \Z^d} \, R(\el+\k) \, X_\el, \,\k \in \Z^d.$$

By permuting summation and integral, we easily obtain:
\begin{eqnarray*}
c(Y_{\k_1}, ... , Y_{\k_r}) =  \sum_{(\el_1, ..., \el_r) \, \in
(\Z^d)^r} \, c(X_{\el_1}, ... , X_{\el_r}) \,  R(\el_1 +\k_1) ...
R(\el_r + \k_r).
\end{eqnarray*}

In particular, we have for $Y = \sum_{\el \in \Z^d} \, R(\el) \,
X_\el$:
\begin{eqnarray}
c^{(r)}(Y) = c(Y, ..., Y) =  \sum_{(\el_1, ..., \el_r) \, \in
(\Z^d)^r} \, c(X_{\el_1}, ... , X_{\el_r}) \,  R(\el_1) ...
R(\el_r). \label{cumLin0}
\end{eqnarray}

\vskip 3mm {\it Limiting distribution and cumulants}

For our purpose, we state in terms of cumulants a particular case of
a theorem of M. Fr\'echet and J. Shohat, generalizing classical
results of A. Markov. Using the formulas linking moments and
cumulants, a special case of the ``generalized statement of the second
limit-theorem" given in \cite{FreSho31} can be expressed follows:

\begin{thm} \label{FreSho} Let $(Z^n, n \geq 1)$ be a sequence
of centered r.r.v. such that
\begin{eqnarray}
\lim_n c^{(2)}(Z^n) = \sigma^2, \ \lim_n c^{(r)}(Z^n) = 0, \forall r
\geq 3, \label{limCum}
\end{eqnarray}
then $(Z^n)$ tends in distribution to $\Cal N(0, \sigma^2)$. (If
$\sigma = 0$, then the limit is $\delta_0$).
\end{thm}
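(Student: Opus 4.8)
The plan is to deduce the statement from the classical method of moments. Recall that a probability distribution on $\R$ with moments of all orders is determined by its moments if those moments do not grow too fast (Carleman's condition), and the normal distribution $\Cal N(0,\sigma^2)$ satisfies this. The Fr\'echet--Shohat theorem (the ``second limit-theorem'') states precisely that if a sequence of distributions has moments converging to the moments of such a determinate distribution, then it converges in distribution to it. So the only work is to translate the hypothesis on cumulants (\ref{limCum}) into a hypothesis on moments, and to identify the moment limits with the Gaussian moments.

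First I would use the moment--cumulant inversion formula (\ref{cumFormu2}): for each fixed $r$, $\E((Z^n)^r) = m(J_r) = \sum_{\Cal P} s(I_1)\cdots s(I_p)$, where $\Cal P$ runs over partitions of $\{1,\dots,r\}$ and each $s(I_\ell)$ is the joint cumulant $c^{(|I_\ell|)}(Z^n)$ (since all the variables are copies of $Z^n$). Since $Z^n$ is centered, $s(I_\ell)=0$ whenever $|I_\ell|=1$. By hypothesis $c^{(|I_\ell|)}(Z^n)\to 0$ whenever $|I_\ell|\ge 3$, and $c^{(2)}(Z^n)\to\sigma^2$. Hence in the limit only partitions all of whose blocks have size exactly $2$ survive; for such a partition to exist $r$ must be even, say $r=2k$, and there are $(2k-1)!!=(2k)!/(2^k k!)$ such pairings, each contributing $(\sigma^2)^k$ in the limit. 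Therefore $\lim_n \E((Z^n)^{2k}) = (2k-1)!!\,\sigma^{2k}$ and $\lim_n \E((Z^n)^{2k+1}) = 0$, which are exactly the moments of $\Cal N(0,\sigma^2)$ (and, when $\sigma=0$, the moments of $\delta_0$).

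Then I would invoke the Fr\'echet--Shohat theorem in its moment form: the limiting moments just computed are those of the normal law $\Cal N(0,\sigma^2)$, which is uniquely determined by its moments, so $(Z^n)$ converges in distribution to $\Cal N(0,\sigma^2)$ (resp. to $\delta_0$ when $\sigma=0$). Since the paper explicitly says it is reformulating \cite{FreSho31} via the moment--cumulant formulas, this last invocation is legitimate as a citation rather than something to be reproved.

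The only genuine subtlety --- and hence the ``main obstacle,'' though it is a mild one --- is the passage to the limit term by term in the finite sum over partitions: for each fixed $r$ the number of partitions is finite and independent of $n$, so convergence of each summand gives convergence of the sum, with no uniformity issue. One should also note that the hypothesis implicitly guarantees the existence of all moments of $Z^n$ (so that the cumulants are well-defined), as stated in the ``reminders'' paragraph; no additional tail or uniform-integrability control is needed because the method of moments route, unlike a characteristic-function argument, only requires pointwise convergence of moments to those of a determinate law.
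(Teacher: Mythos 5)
Your argument is correct and is exactly the route the paper intends: the paper gives no separate proof, merely noting that the statement is the Fr\'echet--Shohat second limit-theorem rewritten ``using the formulas linking moments and cumulants,'' and your proposal spells out precisely that translation (pair partitions surviving in the limit of formula (\ref{cumFormu2}), yielding the Gaussian moments $(2k-1)!!\,\sigma^{2k}$) together with the standard method-of-moments conclusion. No gaps; the term-by-term passage to the limit over the finitely many partitions and the determinacy of $\Cal N(0,\sigma^2)$ by its moments are handled correctly.
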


It implies the following result (cf. Theorem 7 in \cite{Leo60b}):
\begin{thm} \label{Leonv} Let $(X_\k)_{\k\in \Z^d}$ be a random process and
$(R_n)_{n \geq 1}$ a summation sequence on $\Z^d$. Let $(Y^n)_{n
\geq 1}$ be the process defined by $Y^n = \sum_\el \, R_n(\el) \,
X_\el, n \geq 1$. Under the assumptions $\lim_n \|Y^n\|_2 = +\infty$
and
\begin{eqnarray}
\sum_{(\el_1, ..., \el_r) \, \in (\Z^d)^r} \, c(X_{\el_1}, ... ,
X_{\el_r}) \,  R_n(\el_1) ... R_n(\el_r)  = o(\|Y^n\|_2^r), \forall
r \geq 3, \label{smallCumul}
\end{eqnarray}
${Y^n \over \|Y^n\|_2}$ tends in distribution to $\Cal N(0, 1)$ when
$n$ tends to $\infty$.
\end{thm}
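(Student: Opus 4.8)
The plan is to deduce the statement from the Fr\'echet--Shohat criterion, Theorem \ref{FreSho}, applied to the normalized variables $Z^n := Y^n/\|Y^n\|_2$. Since $\|Y^n\|_2 \to +\infty$, these are well defined for $n$ large; as the $R_n$ have finite support, $Y^n$ (hence $Z^n$) has moments of all orders, so it suffices to verify the two conditions in (\ref{limCum}), namely $\lim_n c^{(2)}(Z^n) = 1$ and $\lim_n c^{(r)}(Z^n) = 0$ for every $r \geq 3$, together with the centering of $Z^n$.

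First I would record the homogeneity of cumulants: from the definition of $c^{(r)}(Y)$ as the coefficient of $t^r/r!$ in $\ln \E(e^{tY})$, one gets $c^{(r)}(\lambda Y) = \lambda^r\, c^{(r)}(Y)$ for every $\lambda \in \R$, hence $c^{(r)}(Z^n) = \|Y^n\|_2^{-r}\, c^{(r)}(Y^n)$. Next, by formula (\ref{cumLin0}) --- obtained just above by permuting summation and expectation --- one has $c^{(r)}(Y^n) = \sum_{(\el_1, ..., \el_r)\,\in(\Z^d)^r} c(X_{\el_1}, ... , X_{\el_r})\, R_n(\el_1) ... R_n(\el_r)$, which is exactly the left-hand side of (\ref{smallCumul}). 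Combining the two identities, for each $r \geq 3$ the quantity $c^{(r)}(Z^n)$ equals $\|Y^n\|_2^{-r}$ times something that is $o(\|Y^n\|_2^r)$ by hypothesis, so $\lim_n c^{(r)}(Z^n) = 0$.

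It remains to handle $r = 2$ and the centering. We have $c^{(1)}(Y^n) = \E(Y^n)$ and $c^{(2)}(Y^n) = {\rm Var}(Y^n)$; under the standing assumption that the process $(X_\k)$ is centered (as in all our applications, where $f \in L_0^2(G)$), this gives $\E(Y^n) = 0$, so $\E(Z^n) = 0$, $c^{(2)}(Y^n) = \|Y^n\|_2^2$, and $c^{(2)}(Z^n) = 1$ for every $n$. (Without any centering assumption one would instead replace $Y^n$ by $Y^n - \E(Y^n)$ throughout, using that cumulants of order $\geq 2$ are translation invariant and that $\|Y^n - \E(Y^n)\|_2^2 = c^{(2)}(Y^n)$.) Theorem \ref{FreSho} with $\sigma^2 = 1$ then yields the convergence of $Z^n$ in distribution to $\Cal N(0,1)$.

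Since the hypothesis (\ref{smallCumul}) is precisely what is needed for the second line of (\ref{limCum}) to hold after division by $\|Y^n\|_2^r$, there is no genuine obstacle here: the statement is a direct corollary of Theorem \ref{FreSho} combined with the cumulant--linearity identity (\ref{cumLin0}). The only points that deserve a remark are the degree-$r$ homogeneity of the $r$-th cumulant and the use of the centering convention to fix the limiting value of the second cumulant at $1$.
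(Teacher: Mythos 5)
Your proof is correct and follows essentially the same route as the paper: normalize to $Z^n = Y^n/\|Y^n\|_2$, use the homogeneity of cumulants together with the linearity identity (\ref{cumLin0}) to see that hypothesis (\ref{smallCumul}) forces $c^{(r)}(Z^n)\to 0$ for $r\ge 3$, and conclude by the Fr\'echet--Shohat criterion (Theorem \ref{FreSho}). Your additional remarks on the centering convention and on $c^{(2)}(Z^n)=1$ are points the paper leaves implicit, but they do not change the argument.
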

\begin{proof} Let $\beta_n := \|Y^n\|_2 = \|\sum_\el \, R_n(\el) \,
X_\el\|_2$ and $Z_n = \beta_n^{-1} Y^n$.

We have using (\ref{cumLin0}), $c^{(r)}(Z^n)  = \beta_n^{-r}
\sum_{(\el_1, ..., \el_r) \, \in (\Z^d)^r} \, c(X_{\el_1}, ... ,
X_{\el_r}) \, R(\el_1) ... R(\el_r)$. The theorem follows then from
the assumption (\ref{smallCumul}) by Theorem \ref{FreSho} applied to
$(Z_n)$.
\end{proof}

\begin{defn} A measure preserving $\N^d$- or $\Z^d$-action $T: {\tn} \to T^\tn$
on a probability space $(\Omega, \Cal A, \mu)$ is $r$-mixing, $r >
1$, if for all sets $B_1, ..., B_r \in \Cal A$
$$\lim_{\min_{{1 \leq \ell < \ell' \leq r}} \|\n_\ell - \n_{\ell'}\| \to
\infty} \mu(\bigcap_{\ell = 1}^r T^{-\n_\ell} \, B_\ell)=\prod_{\ell
= 1}^r \mu(B_\ell).$$
\end{defn}

\begin{nota} {\rm For $f \in L_0^\infty$, the space of measurable essentially bounded
functions on $(\Omega, \mu)$ with $\int f \, d\mu = 0$, we apply the
definition of moments and cumulants to $(T^{\n_1}f,..., T^{\n_r}f)$
and put}
\begin{eqnarray}
m_f(\n_1,..., \n_r) = \int \, T^{\n_1} f... T^{\n_r} f \, d\mu, \ \
s_f(\n_1,..., \n_r) := c(T^{\n_1} f,..., T^{\n_r} f). \label{notasf}
\end{eqnarray}
\end{nota}

In order to show that the cumulants of a system which is mixing of all orders
are asymptotically null, we need the following lemma.

\begin{lem} \label{subSeqLem} For every sequence $(\n_1^k,..., \n_r^k)$ in
$(\Z^d)^r$, there are a subsequence with possibly a permutation of
indices (still written $(\n_1^k,..., \n_r^k)$), an integer
$\kappa(r) \in [1, r]$ and a subdivision $1 = r_{1} < r_{2} < ... <
r_{{\kappa(r)-1}} <  r_{{\kappa(r)}} \leq r$ of $\{1, ..., r\}$,
such that
\begin{eqnarray}
&&\lim_k \min_{1 \leq s \not = s' \leq \kappa(r)}\|\n_{r_{s}}^k -
\n_{r_{s'}}^k\| = \infty, \label{minCond}\\
&&\n_j^k = \n_{r_{s}}^k + \a_j, \text{ for } r_{s} < j < r_{{s+1}},
\ s = 1, ..., \kappa(r)-1, \text{ and for } r_{\kappa(r)} < j \leq
r, \label{constant1}
\end{eqnarray}
where $\a_j$ is a constant integral vector.

If the sequence $(\n_1^k,..., \n_r^k)$ satisfy $\lim_k \max_{i \not
= j} \|\n_i^k - \n_j^k\| = \infty$, then the construction can be
done in such a way that $\kappa(r) > 1$.
\end{lem}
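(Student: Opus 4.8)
The plan is to proceed by induction on $r$, using a diagonal-type compactness argument at each stage to control which pairwise distances stay bounded and which diverge. First I would look at the "diameter sequence" $\max_{i\neq j}\|\n_i^k-\n_j^k\|$ along the given sequence. Passing to a subsequence, either this is bounded or it tends to $\infty$ (a further subsequence makes the limit exist in $[0,\infty]$). If it stays bounded, then all differences $\n_i^k-\n_1^k$ lie in a fixed finite subset of $\Z^d$, so a subsequence makes each $\n_i^k-\n_1^k=\a_i$ constant; we take $\kappa(r)=1$, $r_1=1$, and (\ref{constant1}) holds with these $\a_j$ while (\ref{minCond}) is vacuous. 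This is the base of the recursion and also handles the "small diameter" case in general.

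Next, suppose along our subsequence $\max_{i\neq j}\|\n_i^k-\n_j^k\|\to\infty$. Fix the index $1$ as a basepoint. For each $j$, the sequence $(\n_j^k-\n_1^k)_k$ in $\Z^d$ either has a bounded subsequence — in which case, after refining, $\n_j^k-\n_1^k$ is eventually a constant vector — or $\|\n_j^k-\n_1^k\|\to\infty$. Group together with index $1$ all those $j$ of the first type; call this block $B_1=\{j: \n_j^k-\n_1^k \text{ eventually constant}\}$, and note $B_1$ is an interval after we permute indices so that $B_1=\{1,2,\dots,r_2-1\}$ (we are free to reorder). For $j\in B_1$ set $\a_j:=\lim_k(\n_j^k-\n_1^k)$, which gives (\ref{constant1}) on this block with $r_1=1$. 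The remaining indices $\{r_2,\dots,r\}$ all satisfy $\|\n_j^k-\n_1^k\|\to\infty$. Now apply the induction hypothesis to the tuple $(\n_{r_2}^k,\dots,\n_r^k)$ (of length $r-r_2+1<r$): after a further subsequence and permutation within this block, we obtain a subdivision $r_2=r_2'<r_3'<\cdots$ with $\lim_k\min_{s\neq s'}\|\n_{r_s'}^k-\n_{r_{s'}'}^k\|=\infty$ and the constancy relations (\ref{constant1}) within each sub-block. Concatenating $r_1=1$ with the subdivision produced for the tail gives the desired subdivision $1=r_1<r_2<\cdots<r_{\kappa(r)}\le r$ with $\kappa(r)>1$, since we have at least the two blocks $B_1$ and the (nonempty) tail.

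It remains to verify the global separation condition (\ref{minCond}): $\lim_k\min_{1\le s\neq s'\le\kappa(r)}\|\n_{r_s}^k-\n_{r_{s'}}^k\|=\infty$. The separations among $r_2,\dots,r_{\kappa(r)}$ come directly from the induction hypothesis applied to the tail. For pairs involving $r_1=1$: each $r_s$ with $s\ge 2$ lies in the tail block, so $\|\n_{r_s}^k-\n_1^k\|\to\infty$ because $r_s\notin B_1$ (it was classified as a "diverging" index relative to basepoint $1$). Thus every pair $(r_1,r_s)$ with $s\ge2$ is separated, which completes (\ref{minCond}). The final clause of the statement — that $\kappa(r)>1$ can be arranged when $\max_{i\neq j}\|\n_i^k-\n_j^k\|\to\infty$ — is exactly what this second case produces, since the tail block $\{r_2,\dots,r\}$ is nonempty precisely because the diameter diverges (some $j$ has $\|\n_j^k-\n_1^k\|\to\infty$).

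The main obstacle, and the only place requiring care, is the bookkeeping of the simultaneous permutation and the nested subsequence extractions: one must make sure that the block $B_1$ containing the basepoint can indeed be taken to be an initial interval after relabeling, and that the induction hypothesis is applied to a genuinely shorter tuple so the recursion terminates. Since each extraction is of a convergent-or-divergent subsequence in $\Z^d$ (a countable set, so "bounded subsequence" is equivalent to "eventually constant subsequence"), no analytic subtlety arises beyond finitely many diagonalizations; the content is purely combinatorial.
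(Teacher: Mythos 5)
Your proof is correct. It rests on the same basic mechanism as the paper's argument --- induction combined with repeated subsequence extraction, using the dichotomy that a sequence in $\Z^d$ either has an (eventually constant) bounded subsequence or tends to infinity in norm --- but the induction is organized differently. The paper inducts on the number of points: it assumes the clustering has been built for $(\n_1^k,\dots,\n_{r-1}^k)$ and then decides whether the new point $\n_r^k$ diverges from all existing block representatives (in which case it opens a new block) or stays at bounded, hence eventually constant, distance from one of them (in which case it is absorbed into that block). You instead peel off one whole block at a time: you fix the basepoint $\n_1^k$, collect into $B_1$ every index whose difference from $\n_1^k$ can be made constant along a subsequence, and recurse on the complementary indices, all of which diverge from $\n_1^k$. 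Your verification of the separation condition is clean --- pairs inside the tail come from the induction hypothesis, and pairs involving block $1$ come from the very definition of the tail --- and your triangle-inequality observation that a divergent diameter forces some index to diverge from the basepoint correctly yields $\kappa(r)>1$ in the final clause (the paper's corresponding argument is somewhat more involved because it must track how $\kappa$ evolves as points are added one by one). Your closing remark about doing the classification of the indices $j$ sequentially, refining the subsequence at each step, is exactly the care needed; with that, the argument is complete.
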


Remark that if $\sup_k \max_{i \not = j} \|\n_i^k - \n_j^k\| <
\infty$, then $\kappa(r) = 1$ so that (\ref{minCond}) is void and
(\ref{constant1}) is void for the indexes such that $r_{{s+1}} =
r_{s} +1$.
\begin{proof} The proof is by induction. The result is clear for $r=
2$. Suppose we have construct the subsequence for the sequence of
$r-1$-tuples $(\n_1^k,..., \n_{r-1}^k)$.

Let $1 \leq r_{1} < r_{2} < ... < r_{{\kappa(r-1)}} \leq r-1$ be the
corresponding subdivision of $\{1, ..., r -1\}$, as stated above for
the sequence $(\n_1^k,..., \n_{r-1}^k)$. If the sequence
$(\n_1^k,..., \n_{r-1}^k)$ satisfy $\lim_k \max_{1 < i < j \leq r-1}
\|\n_i^k - \n_j^k\| = \infty$, then $\kappa(r-1) > 1$ by
construction in the induction process.

Now we consider $(\n_1^k,..., \n_r^k)$. If $\lim_k \|\n_r^k -
\n_i^k\| = +\infty$, for all $i=1, ..., r-1$, then we have just to
take $1 \leq r_{1} < r_{2} < ... < r_{{\kappa(r-1)}} <
r_{{\kappa(r)}} = r$ as new subdivision of $\{1, ..., r\}$.

If $\liminf_k \|\n_r^k - \n_{i_s}^k\| < +\infty$, for some $s \leq
\kappa(r-1)$, then along a new subsequence (still denoted with the
same notation) we have $\n_r^k = \n_{i_s}^k + \a_r$, where $\a_r$ is
a constant integral vector. After changing the labels, we insert
$n_r$ in the subdivision for $\{1, ..., r-1\}$ and obtain the new
subdivision for $\{1, ..., r\}$.

For the last condition on $\kappa$, suppose that $\lim_k \max_{1 < i
< j \leq r} \|\n_i^k - \n_j^k\| = \infty$.

Then if $\liminf_k \max_{1 < i < j \leq r-1} \|\n_i^k - \n_j^k\| <
+\infty$, necessarily, $\kappa(r) > 1$. If, on the contrary, the
sequence $(\n_1^k,..., \n_{r-1}^k)$ satisfy $\lim_k \max_{1 < i < j
\leq r-1} \|\n_i^k - \n_j^k\| = \infty$, then $\kappa(r-1) > 1$ so
that $\kappa(r) \geq \kappa(r-1) > 1$.
\end{proof}

\begin{lem} \label{skTozeroLem} If a $\Z^d$-dynamical system is mixing
of order $r \geq 2$, then, for any $f \in L_0^\infty$,
\begin{eqnarray}
\underset{\max_{i \not = j} \|\n_i - \n_j\| \to \infty} \lim
s_f(\n_1,..., \n_r) = 0. \label{skTozeroForm}
\end{eqnarray}
\end{lem}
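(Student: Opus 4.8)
The plan is to prove this by downward induction on the number of ``clusters'' using the combinatorial structure provided by Lemma~\ref{subSeqLem} together with the moment--cumulant formulas (\ref{cumFormu1})--(\ref{cumFormu2}). Suppose (\ref{skTozeroForm}) fails: then there is $\varepsilon > 0$ and a sequence $(\n_1^k, \ldots, \n_r^k)$ with $\max_{i\neq j}\|\n_i^k - \n_j^k\| \to \infty$ along which $|s_f(\n_1^k, \ldots, \n_r^k)| \geq \varepsilon$. Apply Lemma~\ref{subSeqLem} to extract a subsequence (and permute indices --- this is harmless since $s_f$ is symmetric) so that there is a subdivision into $\kappa := \kappa(r)$ blocks, with $\kappa > 1$ by the last clause of the lemma (using $\max_{i\neq j}\|\n_i^k - \n_j^k\| \to \infty$), the inter-block distances $\|\n_{r_s}^k - \n_{r_{s'}}^k\|$ all tending to infinity, and within each block $\n_j^k = \n_{r_s}^k + \a_j$ with $\a_j$ a fixed integral vector.

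The key step is to compute the limit of $s_f(\n_1^k, \ldots, \n_r^k)$ along this subsequence using (\ref{cumFormu1}): $s_f(\n_1^k,\ldots,\n_r^k) = \sum_{\Cal P}(-1)^{p-1}(p-1)!\, m_f(I_1)\cdots m_f(I_p)$, where $\Cal P = \{I_1, \ldots, I_p\}$ runs over partitions of $\{1,\ldots,r\}$ into intervals. Now $m_f(I_q) = \int \prod_{j\in I_q} T^{\n_j^k} f\, d\mu$. Because the system is $r$-mixing and $r$-mixing implies $r'$-mixing for $2 \leq r' \leq r$, whenever an interval $I_q$ meets at least two distinct blocks its moment $m_f(I_q)$ splits asymptotically into the product $\prod_s m_f(I_q \cap \text{block } s)$ as $k \to \infty$, because the relevant inter-block distances go to infinity and $|I_q|\leq r$. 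Therefore, in the limit, every $m_f(I_q)$ can be replaced by the product of the moments of its traces on the blocks; equivalently, $\lim_k m_f(I_1)\cdots m_f(I_p) = \prod_{s=1}^{\kappa} m_f^{(s)}\big(\{\text{traces of } I_1,\ldots,I_p \text{ on block } s\}\big)$, where $m_f^{(s)}$ refers to the moments of the shifted configuration $(\a_j)_{j \in \text{block } s}$ (a fixed, $k$-independent finite configuration, since moments are invariant under a common translation by $\n_{r_s}^k$).

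The conclusion then follows from the general identity that a joint cumulant of a family of random variables that splits into two or more mutually independent groups is zero --- equivalently, the alternating sum in (\ref{cumFormu1}), after this factorization over blocks, reorganizes (by grouping partitions according to how they restrict to each block, and using (\ref{cumFormu2}) on each block to resum) into the cumulant of the block-product random variables, which vanishes because $\kappa \geq 2$ and the blocks are ``independent'' in the limit. Concretely, one shows $\lim_k s_f(\n_1^k,\ldots,\n_r^k) = c\big(\prod_{j \in \text{block }1} T^{\a_j} f, \ldots, \prod_{j \in \text{block }\kappa} T^{\a_j} f\big)$ for the limiting independent model, and a joint cumulant involving $\kappa \geq 2$ genuinely independent arguments is $0$. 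This contradicts $|s_f| \geq \varepsilon$, proving the lemma.

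The main obstacle is the careful bookkeeping in the third step: one must verify that the double sum --- over interval partitions $\Cal P$ of $\{1,\ldots,r\}$ and then over the induced partition pieces on each block --- reassembles correctly into the block-factorized cumulant, keeping track of the $(-1)^{p-1}(p-1)!$ weights. The cleanest route is probably to avoid reorganizing the sum explicitly and instead argue at the level of cumulants directly: $s_f(\n_1^k,\ldots,\n_r^k) = c(T^{\n_1^k}f, \ldots, T^{\n_r^k}f)$, and one invokes the standard fact (provable from (\ref{cumFormu1})) that joint cumulants are continuous functions of the joint moments, together with the fact that the limiting joint law of $(T^{\n_1^k}f,\ldots,T^{\n_r^k}f)$ is, by $r$-mixing, the law of $\kappa$ independent blocks --- so $s_f$ converges to a cumulant with a nontrivial independence splitting, hence to $0$. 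A minor point to handle is that $f$ is only in $L_0^\infty$, so all moments exist and are uniformly bounded, making all the limit interchanges legitimate.
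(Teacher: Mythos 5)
Your ``cleanest route'' is essentially the paper's own proof: after extracting the block structure from Lemma~\ref{subSeqLem} (with $\kappa(r)>1$), the paper passes to a subsequence along which the joint law of $(T^{\n_1^k}f,\dots,T^{\n_r^k}f)$ converges together with its moments of order $\le r$ (legitimate since $f\in L_0^\infty$), uses $r$-mixing to show the limit law $\nu$ factors as a product over at least two disjoint groups of coordinates, and concludes that the joint cumulant vanishes because $\Phi=\ln\int e^{\sum t_jx_j}d\nu$ becomes additive over the groups --- precisely your ``nontrivial independence splitting kills the cumulant'' step. One caveat: your displayed claim $\lim_k s_f(\n_1^k,\dots,\n_r^k)=c\bigl(\prod_{j\in\text{block }1}T^{\a_j}f,\dots,\prod_{j\in\text{block }\kappa}T^{\a_j}f\bigr)$ is not a valid identity between an $r$-variate cumulant and a $\kappa$-variate cumulant of block products (already for $r=3$, $\kappa=2$ they differ for general laws), but this is harmless here since both sides vanish under the independence you establish, and the formulation you actually rely on --- the $r$-variate cumulant of a vector whose coordinates split into two mutually independent groups is zero --- is correct and is exactly what the paper proves.
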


\begin{proof} We give a sketch of the proof. The notation $s_f$
was introduced in (\ref{notasf}). Suppose that (\ref{skTozeroForm})
does not hold. Then there is $\varepsilon
> 0$ and a sequence of $r$-tuples $(\n_1^k =\0,..., \n_r^k)$ such
that $|s_f(\n_1^k,..., \n_r^k)| \geq \varepsilon$ and $\max_{i \not
= j} \|\n_i^k - \n_j^k\| \to \infty$ (we use stationarity).

By taking a subsequence (but keeping the same notation), we can
assume that, for two fixed indexes $i, j$, $\lim_k \|\n_i^k -
\n_j^k\| = \infty$.

From Lemma \ref{subSeqLem}, it follows that there is a subdivision
$1 = r_{1} < r_{2} < ... < r_{{\kappa(r)-1}} <  r_{{\kappa(r)}} \leq
r$ and constant integer vectors $\a_j$ such that
\begin{eqnarray}
&&\lim_k \min_{1 \leq s \not = s' \leq \kappa(r)}\|\n_{r_{s}}^k -
\n_{r_{s'}}^k\| = \infty, \label{minCond2}\\
&&\n_j^k = \n_{r_{s}}^k + \a_j, \text{ for } r_{s} < j < r_{{s+1}},
\ s = 1, ..., \kappa(r)-1, \text{ and for } r_{\kappa(r)} < j \leq
r.
\end{eqnarray}

Let $d\mu_k(x_1, ..., x_r)$ denote the probability measure on $\R^r$
defined by the distribution of the random vector $(T^{\n_1^k} f(.),
..., T^{\n_r^k}f(.))$. We can extract a converging subsequence from
the sequence $(\mu_k)$, as well as for the moments of order $\leq
r$.

Let us denote $\nu(x_1, ..., x_r)$ (resp. $\nu(x_{i_1}, ...,
x_{i_p})$) the limit of $\mu_k(x_1, ..., x_r)$ (resp. of its
marginal measures $\mu_k(x_{i_1}, ..., x_{i_p})$ for $\{i_1, ...,
i_p\} \subset \{1, ..., r\}$).

Let $\varphi_i, i=1, ..., r$, be continuous functions with compact
support on $\R$. Mixing of order $r$ and condition (\ref{minCond2})
imply
\begin{eqnarray*}
&&\nu(\varphi_1 \otimes \varphi_2 \otimes  ...  \otimes \varphi_r)
=\lim_k \int_{\R^d} \varphi_1 \otimes \varphi_2 \otimes  ... \otimes
\varphi_r \, d\mu_k =
\lim_k \int \prod_{i=1}^r \varphi_i(f(T^{\n_i^k}x))\  d\mu(x) \\
&&=\lim_k \int \ \bigl[\prod_{s=1}^{\kappa(r)-1} \ \prod_{r_{s} \leq
j < r_{{s+1}}} \varphi_j(f(T^{\n_s^k + \a_j} x))\bigr] \,
\prod_{\kappa(r) \leq j \leq r} \varphi_j(f(T^{\n_{\kappa(r)}^k + \a_j} x)) \ d\mu(x) \\
&&= \bigl[\prod_{s=1}^{\kappa(r)-1} \, \bigl(\int \prod_{r_{s} \leq
j < r_{{s+1}}} \varphi_j(f(T^{\a_j}x)) \ d\mu(x) \bigr)\bigr] \,
\bigl[\int \prod_{\kappa(r) \leq j \leq r} \varphi_j(f(T^{\a_j} x))
\ d\mu(x)\bigr].
\end{eqnarray*}

Therefore $\nu$ is the product of marginal measures corresponding to
disjoint subsets: at least there are $I_1 = \{i_1, ..., i_p\}, I_2 =
\{i_1', ..., i_{p'}'\} \subset J_r = \{1, ..., r\}$, two non empty
subsets such that $(I_1, I_2)$ is a partition of $J_r$ and
$d\nu(x_1, ...,x_r) = d\nu(x_{i_1}, ..., x_{i_p}) \times
d\nu(x_{i_1'}, ..., x_{i_{p'}'})$.

 Putting $\Phi(t_1, ..., t_r) =
\ln \int e^{\sum t_j x_j} \ d\mu(x_1, ...,x_r)$ and the analogous
formulas for $\nu(x_{i_1}, ..., x_{i_p})$ and $\nu(x_{i_1'}, ...,
x_{i_p'})$, we obtain: $\Phi(t_1, ..., t_r) = \Phi(t_{i_1}, ...,
t_{i_p}) + \Phi(t_{i_1'}, ..., t_{i_p'})$. It implies that the
derivative $ {\partial^r \over \partial t_1 ...
\partial t_r}\Phi(t_1, ..., t_r)|_{t_1 = ... = \, t_r = 0}$ is 0.
Hence $c(\nu(x_1, ..., x_r)) = 0$.

But this contradicts $\liminf_k |s_f(\n_1^k,..., \n_r^k)| > 0$.
\end{proof}

\vskip 3mm {\bf Application to $d$-dimensional actions by
endomorphisms}

 For an action of $\N^d$ by commuting endomorphisms on $(G, \mu)$,
a compact abelian group with Haar measure $\mu$, the method of moments
as in \cite{Leo60b} can be used for the CLT when mixing of all
orders is satisfied. It gives immediately the CLT for trigonometric
polynomials.

\begin{prop} \label{tclPol0} Let $\n: (n_1, ..., n_d) \to T^\n =
T_1^{n_1}... T_d^{n_d}$ be a totally ergodic $\N^d$-action by commuting
endomorphisms on a compact abelian group $G$ which is mixing of all orders.
Let $(R_n)_{n \geq 1}$ be a summation sequence
on $\N^d$ and let $f$ be a trigonometric polynomial. If $\lim_n \|\sum_\ell
R_n(\el) \, T^\el f \|_2 = \infty$, then the CLT is satisfied by the
sequence $({\sum_\el R_n(\el) \, T^\el f \over \|\sum_\el R_n(\el) \, T^\el f \|_2})_{n \geq 1}$.
\end{prop}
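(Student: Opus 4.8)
The plan is to apply Theorem~\ref{Leonv} to the process $X_\k = T^\k f$ for $\k \in \N^d$ (extended by zero to $\Z^d$, or simply working on $\N^d$ throughout), with the summation sequence $(R_n)$. We are given $\lim_n \|\sum_\el R_n(\el)\,T^\el f\|_2 = \infty$, so it remains to verify the cumulant condition \eqref{smallCumul}: for every $r \geq 3$,
\begin{eqnarray*}
\sum_{(\el_1,\ldots,\el_r)\in(\N^d)^r} s_f(\el_1,\ldots,\el_r)\, R_n(\el_1)\cdots R_n(\el_r) = o(\|Y^n\|_2^r).
\end{eqnarray*}
Since a trigonometric polynomial $f$ lies in $L_0^\infty$, and the action is totally ergodic and mixing of all orders, Lemma~\ref{skTozeroLem} applies: $s_f(\n_1,\ldots,\n_r)\to 0$ as $\max_{i\neq j}\|\n_i-\n_j\|\to\infty$. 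Together with the uniform bound $|s_f(\n_1,\ldots,\n_r)|\leq C_r$ coming from $\|f\|_\infty<\infty$ (via \eqref{cumFormu1}, since all moments $m_f$ are bounded by $\|f\|_\infty^r$), this is exactly the input needed to make the cumulant sum negligible.

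The key quantitative step is to compare the left side with the square of the $L^2$-norm to the power $r$. First I would write $\beta_n^2 := \|Y^n\|_2^2 = \sum_{\el,\el'} R_n(\el)R_n(\el')\, m_f(\el,\el')$, and note that, again by mixing of order $2$, $m_f(\el,\el') = s_f(\el,\el') \to 0$ unless $\|\el-\el'\|$ stays bounded; so $\beta_n^2$ is of the same order as $D_n := \sum_{\el\in\N^d} R_n(\el)^2$ up to a positive constant depending on $f$ (more precisely, $\beta_n^2 / D_n$ is bounded above; the divergence hypothesis guarantees $\beta_n \to \infty$, which for bounded summation sequences forces $D_n\to\infty$). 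Then I would split the $r$-fold sum according to the partition structure furnished by Lemma~\ref{subSeqLem}: for any $r$-tuple $(\el_1,\ldots,\el_r)$, either all the $\|\el_i - \el_j\|$ are bounded by some fixed $L$, in which case the number of such tuples with $R_n(\el_1)\cdots R_n(\el_r)\neq 0$ contributing is controlled by (a constant depending on $L$ times) $D_n^{r/2}$ via Cauchy--Schwarz and the boundedness of $R_n$, while the cumulant factor is bounded; or some $\|\el_i-\el_j\|$ is large, in which case $|s_f|$ is small. Making this dichotomy uniform — for every $\varepsilon>0$ choose $L$ so that $|s_f(\n_1,\ldots,\n_r)|<\varepsilon$ whenever $\max\|\n_i-\n_j\|>L$ — one obtains that the $r$-fold cumulant sum is $O(D_n^{(r-1)/2}) + \varepsilon\, O(D_n^{r/2})$, hence $o(D_n^{r/2}) = o(\beta_n^r)$ since $\varepsilon$ is arbitrary and $D_n\to\infty$.

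The main obstacle is the combinatorial bookkeeping in that last dichotomy: one must show that the tuples where all pairwise distances are bounded contribute only $O(D_n^{(r-1)/2})$, i.e.\ strictly lower order than $D_n^{r/2}$. This is where the ``loss of one dimension'' happens — fixing the mutual differences $\el_i - \el_1 = \a_i$ leaves a single free index $\el_1$, and summing $R_n(\el_1)R_n(\el_1+\a_2)\cdots$ over $\el_1$ is bounded (using $R_n$ uniformly bounded and $\sum R_n = $ at most $O(D_n)$... actually one needs $\sum_{\el_1} R_n(\el_1)\cdots R_n(\el_1+\a_r) \le (\sup R_n)^{r-2}\sum_{\el_1} R_n(\el_1)R_n(\el_1+\a_2)$, then Cauchy--Schwarz gives $\le (\sup R_n)^{r-2} D_n$), and then summing the finitely many (bounded by $L$) choices of $(\a_2,\ldots,\a_r)$ costs only a constant. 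Comparing $D_n$ to $D_n^{r/2}$ for $r\geq 3$ gives the desired gain. For terms where the tuple splits into $\kappa \geq 2$ well-separated clusters, the cumulant is small by Lemma~\ref{skTozeroLem} and one bounds the sum within each cluster as before, getting $\varepsilon$ times $O(D_n^{r/2})$. Assembling these estimates and invoking Theorem~\ref{Leonv} completes the proof.
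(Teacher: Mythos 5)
Your overall strategy --- reduce to Theorem~\ref{Leonv} and verify the cumulant condition (\ref{smallCumul}) via the decay of cumulants supplied by Lemma~\ref{skTozeroLem} --- is exactly the paper's. The gap lies in the quantitative step, at precisely the point where the hypothesis that $f$ is a \emph{trigonometric polynomial} (rather than merely a bounded function) must be exploited; you never use it beyond $f\in L_0^\infty$. Concretely, your $\varepsilon$--$L$ dichotomy fails on the separated tuples. For tuples with some pairwise distance $>L$ you only know $|s_f|<\varepsilon$, and you bound their total contribution by $\varepsilon$ times $O\bigl((\sum_\el R_n(\el)^2)^{r/2}\bigr)$. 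But a tuple splitting into $\kappa$ well-separated clusters has a free base point in each cluster, so the corresponding sum of $R_n(\el_1)\cdots R_n(\el_r)$ is of order $(\sum_\el R_n(\el))^{\kappa}$, and $\kappa$ can be as large as $r$ (all points pairwise far apart, no constraint at all). For $R_n=1_{D_n}$ the totally separated tuples alone weigh up to $\varepsilon\,|D_n|^r$, while $\|Y^n\|_2^2\le\|\varphi_f\|_\infty\,|D_n|$; hence this term is not $o(\|Y^n\|_2^r)$ for any fixed $\varepsilon>0$, and letting $\varepsilon\to0$ afterwards cannot repair a quantity that diverges relative to $\|Y^n\|_2^r$ for each fixed $\varepsilon$. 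Mere pointwise decay $s_f\to 0$ is not enough here.

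The missing idea is the paper's key observation: for $f=\sum_{\k\in\Lambda}c_\k(f)\chi_\k$ the joint moments are $m_f(\n_1,\dots,\n_r)=\sum_{\k_1,\dots,\k_r\in\Lambda}c_{\k_1}\cdots c_{\k_r}\,1_{T^{\n_1}\chi_{\k_1}\cdots T^{\n_r}\chi_{\k_r}=\,{\bf 1}}$, so the moments --- hence, by (\ref{cumFormu1}), the cumulants --- take only \emph{finitely many values}. A finitely-valued function tending to $0$ is eventually exactly $0$: combined with Lemma~\ref{skTozeroLem} this gives an $M_r$ with $s_f(\el_1,\dots,\el_r)=0$ whenever $\max_{i\ne j}\|\el_i-\el_j\|>M_r$. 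The separated tuples then contribute nothing at all, the dichotomy becomes unnecessary, and only your (essentially correct) one-free-index estimate for the close tuples survives. Two further points of care: that surviving bound must be compared with $\|Y^n\|_2^r$ itself and not with $(\sum_\el R_n(\el)^2)^{r/2}$, since the hypothesis gives only $\|Y^n\|_2\to\infty$ and not a lower bound $\|Y^n\|_2^2\ge c\sum_\el R_n(\el)^2$; and your passing claim that the close tuples contribute $O\bigl((\sum_\el R_n(\el)^2)^{(r-1)/2}\bigr)$ is not what your own computation yields (it yields $O(\sum_\el R_n(\el)^2)$, which is what is needed).
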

\begin{proof} For an action by endomorphisms of compact
abelian groups, the moments of the process $(f(T^\n .))_{\n \in
\Z^d}$ for a trigonometric polynomial $f(x) = \sum_{\k \in \Lambda}
c_\k(f) \, \chi_\k(x)$ are:
\begin{eqnarray*}
&&m_f(\n_1, ..., \n_r) = \int f(T^{\n_1} x) ... f(T^{\n_r} x) \ dx =
\sum_{\k_1, ..., \k_r \in \Lambda} c_{k_1} ... c_{k_r} 1_{T^{n_1}
\chi_{\k_1} ... T^{\n_r} \chi_{\k_r} = \, {\bf 1}}.
\end{eqnarray*}
 For $r$ fixed, the function $(\k_1, ..., \k_r) \to m_f(\k_1, ...,
\k_r)$ takes a finite number of values, since by the above formula
$m_f$ is a sum with coefficients 0 or 1 of the products $c_{k_1} ...
c_{k_r}$ with $k_j$ in a finite set. The cumulants of a given order
according to (\ref{cumFormu1}) take also a finite number of values.

Therefore, since mixing of all orders implies
$\underset{\max_{i,j} \|\el _i - \el _j\| \to
\infty} \lim \, s_f(\el _1,..., \el _r) = 0$ by Lemma \ref{skTozeroLem},
there is $M_r$ such that $s_f(\el _1, ..., \el _r) = 0$ for
$\max_{i,j} \|\el _i - \el_j\| > M_r$.

We apply Theorem \ref{Leonv}. Let us check (\ref{smallCumul}). Using
(\ref{cumLin0}), we obtain that
\begin{eqnarray*}
|\sum s_f(\el _1, ..., \el _r) \, R_n(\el _1)...R_n(\el _r)| &&=
|\sum s_f(\0, \el _2-\el _1, ..., \el _r -\el _1) \, R_n(\el _1)...R_n(\el _r)|\\
&&\leq \sum_{\|\el _2'\|, ..., \|\el _r'\| \leq M_r} |s_f(\0, \el
_2', ..., \el _r')| \, \sup_n \|R_n\|_\infty^r.
\end{eqnarray*}
Since the summation sequences are supposed to be bounded, $\sum
s_f(\el _1, ..., \el _r) \, R_n(\el _1)...R_n(\el _r)$ is bounded
and (\ref{smallCumul}) is satisfied.
\end{proof}

Remark that if a subsequence of $\lim_n \|\sum_\ell R_n(\el) \,
T^\el f \|_2$ is bounded, then $f$ is a double coboundary. Indeed,
supposing for simplicity $d=2$, we have the following lemma:
\begin{lem} Let $T_1$ and $T_2$ generate a mixing (2-mixing) $\mathbb N^2$-action.
For $f \in L_0^2(\mu)$, the following statements are equivalent:
\hfill \break (i) $f=(I-T_1)(I-T_2)g$, for some $g\in L_0^2(\mu)$,
\hfill \break (ii)
$\liminf_{m}\|\sum_{j=0}^{m-1}\sum_{k=0}^{m-1}T_1^jT_2^kf\|
<\infty$.
\end{lem}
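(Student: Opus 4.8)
The plan is to prove $(i)\Rightarrow(ii)$ by a telescoping identity and $(ii)\Rightarrow(i)$ by a weak–compactness argument; both steps work verbatim for any finite family of commuting isometries, so I treat only $d=2$ and note at the end that nothing is special about it. Throughout write $S_m f := \sum_{j=0}^{m-1}\sum_{k=0}^{m-1}T_1^j T_2^k f$, and note the basic algebraic fact that, since $T_1$ and $T_2$ commute, $S_m=\bigl(\sum_{j=0}^{m-1}T_1^j\bigr)\bigl(\sum_{k=0}^{m-1}T_2^k\bigr)$ and hence, by telescoping in each factor, $(I-T_1)(I-T_2)S_m=\bigl[(I-T_1)\sum_{j=0}^{m-1}T_1^j\bigr]\bigl[(I-T_2)\sum_{k=0}^{m-1}T_2^k\bigr]=(I-T_1^m)(I-T_2^m)$. (No invertibility of the $T_i$ is needed here.)

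For $(i)\Rightarrow(ii)$: if $f=(I-T_1)(I-T_2)g$ with $g\in L_0^2(\mu)$, then $S_m f=(I-T_1^m)(I-T_2^m)g$, and since the $T_i$ are isometries the operator norms of $I-T_1^m$ and $I-T_2^m$ are at most $2$, so $\|S_m f\|\le 4\|g\|$ for every $m$. In particular $\limsup_m\|S_m f\|<\infty$, which gives $(ii)$ (and shows the sequence in $(ii)$ is in fact bounded).

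For $(ii)\Rightarrow(i)$: choose $m_i\to\infty$ with $\sup_i\|S_{m_i}f\|=:C<\infty$. Since $(S_{m_i}f)_i$ is a bounded sequence in the Hilbert space $L_0^2(\mu)$, after passing to a subsequence we may assume $S_{m_i}f\rightharpoonup g$ weakly for some $g\in L_0^2(\mu)$ (recall $L_0^2$ is a closed, hence weakly closed, subspace). Apply the identity above along this subsequence: $(I-T_1)(I-T_2)S_{m_i}f=(I-T_1^{m_i})(I-T_2^{m_i})f=f-T_1^{m_i}f-T_2^{m_i}f+T_1^{m_i}T_2^{m_i}f$. Here $T_1^{m_i}f=T^{(m_i,0)}f$, $T_2^{m_i}f=T^{(0,m_i)}f$ and $T_1^{m_i}T_2^{m_i}f=T^{(m_i,m_i)}f$, and the norms of the exponents $(m_i,0),(0,m_i),(m_i,m_i)$ all tend to $\infty$; since the $\N^2$-action is $2$-mixing and $f\in L_0^2(\mu)$, the standard extension of mixing from indicators to $L^2$ gives $\langle T^\n f,h\rangle\to 0$ for every $h$ as $\|\n\|\to\infty$, so each of the three subtracted terms converges weakly to $0$ and the right-hand side converges weakly to $f$. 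On the other hand $(I-T_1)(I-T_2)$ is a bounded operator, hence weak–weak continuous, so the left-hand side converges weakly to $(I-T_1)(I-T_2)g$. Comparing limits yields $f=(I-T_1)(I-T_2)g$ with $g\in L_0^2(\mu)$, which is $(i)$.

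I do not expect a genuine obstacle; the only point where $2$-mixing of the joint $\N^2$-action is used, rather than mere ergodicity (or mixing) of $T_1$ and $T_2$ separately, is the vanishing in the weak limit of the diagonal term $T^{(m_i,m_i)}f$ — the two "pure" terms $T^{(m_i,0)}f,T^{(0,m_i)}f$ would already follow from $T_1,T_2$ being mixing. The routine ingredients to record are that $2$-mixing upgrades to $\langle T^\n f,h\rangle\to 0$ for $f\in L_0^2(\mu)$, $h\in L^2(\mu)$ as $\|\n\|\to\infty$, and that bounded operators on a Hilbert space are continuous for the weak topology. The same proof, with $(I-T_1)\cdots(I-T_d)S_m=(I-T_1^m)\cdots(I-T_d^m)$ and $2$-mixing of the $\N^d$-action killing all $2^d-1$ correction terms, gives the corresponding statement for $d$ commuting operators.
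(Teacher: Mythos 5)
Your proof is correct and follows essentially the same route as the paper's: for $(ii)\Rightarrow(i)$ the paper likewise extracts a weakly convergent subsequence $S_{m_\ell}f\rightharpoonup g$ and uses the identity $(I-T_1)(I-T_2)S_{m_\ell}f=(I-T_1^{m_\ell})(I-T_2^{m_\ell})f$ together with $2$-mixing to kill the three correction terms in the weak limit. The paper leaves the easy direction $(i)\Rightarrow(ii)$ implicit; your telescoping bound $\|S_mf\|\le 4\|g\|$ is exactly the intended argument for it.
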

\begin{proof}
Let us prove $(ii)\Rightarrow(i)$. Let $(m_\ell)$ be an increasing
subsequence for which $\|\sum_{j, \, k=0}^{m_\ell-1}T_1^j
T_2^kf\|<\infty$. We may take a subsequence of $(m_\ell)$, still
denoted by $(m_\ell)$, for which $\sum_{j, \, k=0}^{m_\ell-1}T_1^j
T_2^k f\underset{\ell\to\infty}{\longrightarrow} g$ weakly. Hence,
the following weak convergence holds
$$(I-T_1)(I-T_2)g=\lim_\ell (I-T_1^{m_\ell})(I-T_2^{m_\ell})f=f-\lim_\ell
(T_1^{m_\ell}f+T_2^{m_\ell}f-T_1^{m_\ell}T_2^{m_\ell}f)=f$$
\end{proof}

\vskip 3mm \subsection{\bf CLT for compact abelian connected groups}

\

There is a subclass of actions by automorphisms satisfying the
$K$-property and this is a way to prove the CLT in that case,
using martingale-type property, as shown in \cite{CohCo12}. Let us
mention that, for $\Z^d$-action by automorphisms on zero-dimensional
compact abelian groups, the $K$-property (or property of completely
positive entropy) is equivalent to mixing of all orders (cf.
\cite{SchWar93}). We will rather focus here on an extension of the
method of $r$-mixing used by Leonov for a single ergodic
automorphism and use it for abelian groups of toral automorphisms.

\vskip 2mm {\bf The method of Leonov}

The proof of the CLT given by Leonov in \cite{Leo60b} for a single
ergodic automorphism $T$ of a compact abelian group $G$ is based on
the computation of the moments, when $f$ is trigonometric
polynomial. It uses the fact that $T$ is mixing of all orders, a
property shown by Rohlin \cite{Roh61}, consequence of the
$K$-property for ergodic automorphisms.

For $\Z^d$-actions by automorphisms on connected compact abelian
groups, in particular on tori, the method of moments can also be
used, since the mixing property of all orders holds (Theorem
\ref{r-mixing} below). First we prove a CLT for trigonometric
polynomials using the mixing property, then the result is extended
to regular functions by approximation.

\vskip 3mm {\bf Mixing of actions by endomorphisms ($G$ connected)}
\label{mixAuto}

For $\Z^d$-actions by automorphisms on compact abelian groups
mixing of all orders is not always satisfied (cf. \cite{Le78}, \cite{Sc95}).
In 1992, W. Philip \cite{Ph94} and K. Schmidt and T. Ward
\cite{SchWar93} used results about the number of solutions of
$S$-units equations in the study of semigroups of endomorphisms or
automorphisms on compact abelian groups. In particular, the following
mixing of all orders for $\Z^d$-actions by automorphisms on connected groups
is a consequence of algebraic results on $S$-units (\cite{Schl90}):

\begin{thm} \label{r-mixing} (\cite [Corollary 3.3] {SchWar93})
Let $ \n \to T^\n$ be a mixing $\Z^d$-action on a compact,
connected, abelian group G. Then it is $r$-mixing for every $r \geq
2$.\end{thm}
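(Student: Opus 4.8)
The plan is to recall the argument of Schmidt and Ward \cite{SchWar93}, which reduces the assertion to the finiteness theorem for $S$-unit equations. The first step is Pontryagin duality. Since $G$ is compact, connected and abelian, its character group $H=\hat G$ is discrete and torsion-free, and the automorphism action $\n\to T^\n$ dualizes to a $\Z^d$-action on $H$; writing $u_i$ for the dual of $T_i$, this makes $H$ a module over the Laurent ring $R_d=\Z[u_1^{\pm1},\dots,u_d^{\pm1}]$. A routine character computation shows that $T$ is mixing of order $r$ precisely when, for every choice of nonzero $a_1,\dots,a_r\in H$ and every sequence $(\n_1^k,\dots,\n_r^k)$ with $\min_{i\neq j}\|\n_i^k-\n_j^k\|\to\infty$, one has $u^{\n_1^k}a_1+\dots+u^{\n_r^k}a_r\neq 0$ in $H$ for all large $k$ (the contribution of trivial characters is absorbed by a reduction to a smaller $r$). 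For $r=2$ this is exactly the hypothesis that $T$ is mixing, i.e. that the $\Z^d$-action on $H\stm0$ is free; see \lemref{LebSpecEquiv}.

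Second, I would reduce to a principal quotient module. Only finitely many $a_\ell$ occur at once, so it suffices to verify the inequality above on an arbitrary finitely generated $R_d$-submodule $M\subset H$, and $M$ carries a prime filtration $0=M_0\subset M_1\subset\dots\subset M_n=M$ with $M_i/M_{i-1}\cong R_d/\mathfrak p_i$, each $\mathfrak p_i$ prime. Here connectedness of $G$ is used: $H$, hence $M$, is torsion-free as an abelian group, so no $\mathfrak p_i$ contains a nonzero integer and each $R_d/\mathfrak p_i$ is an integral domain of characteristic zero. One then propagates the mixing inequality along the filtration, so that it is enough to treat $H=R_d/\mathfrak p$ with $\mathfrak p$ prime and $\mathcal R:=R_d/\mathfrak p$ of characteristic zero.

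Third, the core of the proof: in the fraction field $F$ of $\mathcal R$ --- a finitely generated extension of $\Q$ --- a hypothetical failure of $r$-mixing gives, after passing to a subsequence and dividing by the last term, a nontrivial vanishing sum $x_1^{(k)}+\dots+x_{r-1}^{(k)}+1=0$ with each $x_\ell^{(k)}$ in a fixed coset $a_\ell a_r^{-1}\Gamma$ of the finitely generated group $\Gamma\le F^*$ generated by the images of $u_1,\dots,u_d$. Choosing a finite set of places of $F$ making all the relevant elements $S$-units, the theorem on $S$-unit equations (Evertse, van der Poorten and Schlickewei, via the subspace theorem; cf. \cite{Schl90}) shows that the solutions with no vanishing proper subsum form a finite set modulo scaling by $F^*$. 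The hypothesis $\min_{i\neq j}\|\n_i^k-\n_j^k\|\to\infty$ forces the $x_\ell^{(k)}$ to spread out, so that they cannot all lie in finitely many scaling classes; hence, along a further subsequence, some proper subsum vanishes, and deleting those terms produces --- after putting the indices in the normal form of \lemref{subSeqLem} --- a failure of $r'$-mixing for some $2\le r'<r$, contradicting the induction hypothesis (the base case $r=2$ being the assumed mixing).

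The step I expect to be the real obstacle is this last translation together with its bookkeeping: passing cleanly from ``$r$-mixing fails'' to a single $S$-unit equation over a fixed field with a fixed finite place set (this is exactly where characteristic zero, i.e. connectedness, is indispensable --- in positive characteristic the $S$-unit theorem fails and mixing of all orders can genuinely break down), and then routing the degenerate solutions back into the induction using the combinatorial decomposition of \lemref{subSeqLem}. The prime-filtration step also needs a little care, since mixing is not perfectly functorial under extensions of $R_d$-modules, but the obstruction is controlled by the associated primes.
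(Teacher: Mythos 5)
The paper does not actually prove this statement: it is quoted verbatim as Corollary 3.3 of Schmidt and Ward \cite{SchWar93}, with only the remark that it is a consequence of the $S$-unit results of \cite{Schl90}, so there is no internal proof to compare against. Your sketch is a faithful reconstruction of the argument of that reference --- dualization to the $R_d$-module $\hat G$, reduction by Noetherianity and prime filtrations to characteristic-zero domains $R_d/\mathfrak{p}$ (which is exactly where connectedness, i.e.\ torsion-freeness of $\hat G$, enters), and the finiteness theorem for non-degenerate solutions of $S$-unit equations, with degenerate solutions routed back into an induction on $r$ --- and the two points you single out as delicate (non-functoriality of mixing along module extensions, and the bookkeeping of vanishing subsums via the clustering of Lemma~\ref{subSeqLem}) are indeed the places where the published proof does real work.
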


\begin{cor} \label{r-mixEndo} Let ${\Cal S}$ be a semigroup of
endomorphisms on a compact connected abelian group $G$. If this action is totally
ergodic, it is mixing of all orders.
\end{cor}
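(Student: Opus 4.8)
The plan is to reduce the statement about a semigroup of endomorphisms to the theorem about automorphisms that was just quoted, using the embedding constructed in Lemma~\ref{embedd}. The key observation is that Lemma~\ref{embedd} produces a compact abelian group $\tilde G$, with $G$ as a factor via a projection $\pi\colon \tilde G \to G$, on which the semigroup $\Cal S$ is realized as a group $\tilde{\Cal S}$ of automorphisms, and moreover $\tilde G$ is connected because $G$ is. Since mixing, ergodicity and mixing of all orders are all statements about the induced action on $L^2_0$, and the $L^2_0$-action of $\Cal S$ on $G$ is (by the factor map, as explained in the paragraph after Lemma~\ref{embedd}) the restriction of the $L^2_0$-action of $\tilde{\Cal S}$ on $\tilde G$ to the subspace generated by characters in $\hat G \subset \tilde H$, the mixing properties transfer between the two.

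Concretely, here is how I would carry it out. First, invoke Lemma~\ref{embedd} to pass from $\Cal S$ acting by endomorphisms on the connected group $G$ to the group $\tilde{\Cal S}$ acting by automorphisms on the connected group $\tilde G$. Second, observe that total ergodicity of the $\Cal S$-action (defined, per the definition in the text, precisely in terms of the extension $\tilde{\Cal S}$) says that $A^{\n}\chi \neq \chi$ for every nontrivial $\chi \in \tilde H$ and every $\n \neq \underline 0$; equivalently the $\tilde{\Cal S}$-action on $\tilde H^*$ is free, which by Lemma~\ref{LebSpecEquiv} is equivalent to $2$-mixing, i.e.\ the $\tilde{\Cal S}$-action is mixing. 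Third, apply Theorem~\ref{r-mixing} to the $\Z^d$-action $\n \to T^{\n}$ on the connected compact abelian group $\tilde G$: since it is mixing, it is $r$-mixing for every $r \geq 2$. Finally, descend: mixing of all orders for $\tilde{\Cal S}$ on $\tilde G$ restricts to mixing of all orders for $\Cal S$ on $G$, because every $B_1,\dots,B_r \in \Cal A$ on $G$ pull back to measurable sets on $\tilde G$ of the same measure (the measure-preserving factor map $\pi$ intertwines the two actions and $\pi_*\tilde\mu = \mu$), and $\mu(\bigcap_\ell A^{-\n_\ell} B_\ell) = \tilde\mu(\bigcap_\ell A^{-\n_\ell}\pi^{-1}B_\ell)$.

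The only mildly delicate point — the ``main obstacle'', though it is more a matter of bookkeeping than of genuine difficulty — is being careful about what ``totally ergodic'' means for a semigroup of endomorphisms and making sure the equivalences of Lemma~\ref{LebSpecEquiv} are being applied to the automorphism group $\tilde{\Cal S}$ on $\tilde G$ rather than directly to $\Cal S$ on $G$, where the spectral picture is not the full Lebesgue spectrum. One should also note that the rank $d$ of $\tilde{\Cal S}$ as a free abelian group may exceed the number of original generators of $\Cal S$, but this is irrelevant since Theorem~\ref{r-mixing} is stated for $\Z^d$-actions of arbitrary rank $d$. With these remarks in place the corollary is essentially a one-line consequence: $\Cal S$ totally ergodic on $G$ $\Longrightarrow$ $\tilde{\Cal S}$ mixing on $\tilde G$ (connected) $\Longrightarrow$ $\tilde{\Cal S}$ $r$-mixing on $\tilde G$ for all $r$ $\Longrightarrow$ $\Cal S$ $r$-mixing on $G$ for all $r$.
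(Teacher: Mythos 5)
Your proof is correct and follows the paper's own argument: pass to the automorphism extension $\tilde{\Cal S}$ on the connected group $\tilde G$ via Lemma~\ref{embedd}, apply the Schmidt--Ward theorem (Theorem~\ref{r-mixing}), and descend through the factor map. You merely spell out more explicitly than the paper the intermediate step that total ergodicity gives $2$-mixing of the extended action (via Lemma~\ref{LebSpecEquiv}) and the transfer of mixing through $\pi$, which is a welcome clarification but not a different route.
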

\begin{proof} \ We use the notations of Lemma \ref{embedd}.
The group $G$ in Lemma \ref{embedd} is connected and Theorem
\ref{r-mixing} applies to the group $\tilde {\Cal S}$ of
automorphisms of $G$ in which ${\Cal S}$ is embedded. The action of
$\tilde {\Cal S}$ is mixing of all orders, hence also the action of
${\Cal S}$.
\end{proof}

\vskip 3mm {\bf CLT for $\N^d$-action by endomorphisms}

\begin{thm} \label{tclPol0A} Let $\n: (n_1, ..., n_d) \to T^\n =
T_1^{n_1}... T_d^{n_d}$ be a totally ergodic $\N^d$-action by
commuting endomorphisms on a compact abelian connected group $G$.
Let $(D_n)_{n \geq 1}$ be a F\o{}lner sequence in $\N^d$ and let $f$
be in $AC_0(G)$. We have\footnote{with the convention that the
limiting distribution is $\delta_0$ if $\sigma^2(f) = 0$.}
$\sigma^2(f) = \varphi_f(0)$ and
$$ (\sum_{\el \in \N^d} R_n(\el)^2)^{-\frac12} \, \sum_{\el \in \N^d} R_n(\el) f(A^\el \, .)
 \overset{distr} {\underset{n \to
\infty} \longrightarrow } \Cal N(0,\sigma^2(f)).$$
\end{thm}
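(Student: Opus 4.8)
The plan is to bootstrap from the polynomial case, which is already available: by \corref{r-mixEndo} the action is mixing of all orders (this is where connectedness of $G$ enters, through \thmref{r-mixing}), so \propref{tclPol0} gives the CLT for trigonometric polynomials along any summation sequence whose $L^2$-norms blow up. The remaining work is an approximation in which the discarded tail is controlled by the sup-norm estimate (\ref{densspectAC}) on the spectral density. Throughout, write $R_n = 1_{D_n}$ (so $\sum_\el R_n(\el)^2 = |D_n|$, and $|D_n| \to \infty$ since $(D_n)$ is F\o lner) and $S_n(g) := |D_n|^{-1/2}\sum_{\el \in D_n} g\circ A^\el$; when the action is by endomorphisms one argues on the connected extension $\tilde G$ of \lemref{embedd}, which carries the same Fourier coefficients for functions on $G$.

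First I would pin down the variance. For $f \in AC_0(G)$, \thmref{condAbsConv} gives that $\varphi_f$ is continuous and that $\sigma^2(f) = \sum_{\n \in \Z^d}\langle A^\n f, f\rangle = \varphi_f(0)$; combining the identity (\ref{ConvVarRotergSumDn}) at $\theta = 0$ with the F\o lner property $\tilde R_n \to \delta_0$ shows $\|S_n(f)\|_2^2 = (\tilde R_n * \varphi_f)(0) \to \varphi_f(0)$, so the asymptotic variance along $(D_n)$ is $\varphi_f(0) = \sigma^2(f)$. This also settles the degenerate case: if $\sigma^2(f) = 0$, then $S_n(f) \to 0$ in $L^2(\mu)$, hence in distribution to $\delta_0$. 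So from here on I would assume $\sigma^2(f) > 0$.

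Next comes the truncation. Choosing finite sets $\Cal N_N \uparrow \hat G^*$ and putting $f_N = \sum_{\chi \in \Cal N_N} c_f(\chi)\,\chi$, $g_N = f - f_N$, absolute convergence of the Fourier series of $f$ gives $\varepsilon_N := \|g_N\|_c \to 0$, with $f_N, g_N \in AC_0(G)$ (recall $c_f(\mathbf 1)=0$). By \thmref{condAbsConv} and (\ref{densspectAC}) applied with $f_1 = 0$, $\sigma^2(g_N) = \varphi_{g_N}(0) \le \|\varphi_{g_N}\|_\infty \le \varepsilon_N^2$, and $|\sigma(f) - \sigma(f_N)| \le \sigma(g_N) \le \varepsilon_N$, so $\sigma(f_N) \to \sigma(f) > 0$; in particular $\sigma(f_N) > 0$, whence $\|\sum_\el R_n(\el) f_N\circ A^\el\|_2 \sim \sigma(f_N)|D_n|^{1/2} \to \infty$, for all large $N$. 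For such $N$, \propref{tclPol0} applies and, combined with the variance asymptotics (Slutsky), yields $S_n(f_N) \to \Cal N(0,\sigma^2(f_N))$ in distribution as $n \to \infty$; meanwhile $\|S_n(g_N)\|_2^2 = (\tilde R_n * \varphi_{g_N})(0) \to \sigma^2(g_N) \le \varepsilon_N^2$.

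Finally I would assemble these via the standard approximation lemma for convergence in distribution: for bounded Lipschitz $\phi$ with constant $L$, $|\E\phi(S_n(f)) - \E\phi(S_n(f_N))| \le L\,\|S_n(g_N)\|_2$, so letting $n\to\infty$ for $N$ fixed and large, then $N\to\infty$ (using $\sigma^2(f_N)\to\sigma^2(f)$, hence $\Cal N(0,\sigma^2(f_N))\to\Cal N(0,\sigma^2(f))$ weakly), one obtains $S_n(f)\to\Cal N(0,\sigma^2(f))$. The step needing most care is precisely this passage from polynomials to $AC_0$: one must know both that the truncations $f_N$ are eventually non-degenerate — so that \propref{tclPol0} is applicable — and that the normalized tails $S_n(g_N)$ are uniformly small in $N$; both are delivered by the sup-norm bound (\ref{densspectAC}), the genuinely deep input (mixing of all orders on connected groups, via $S$-units) having already been absorbed into \corref{r-mixEndo}.
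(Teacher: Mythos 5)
Your proposal is correct and follows essentially the same route as the paper: truncate the Fourier series, apply the trigonometric-polynomial CLT (Proposition~\ref{tclPol0} via mixing of all orders from Corollary~\ref{r-mixEndo}), control the tail through the bound $\|\varphi_{f-f_s}\|_\infty\le\|f-f_s\|_c^2$ of Theorem~\ref{condAbsConv}, and conclude by the standard converging-together argument (the paper cites Billingsley's Theorem~3.2 where you use bounded Lipschitz test functions, an equivalent device). Your explicit treatment of the degenerate case $\sigma^2(f)=0$ and of the non-degeneracy of the truncations $f_N$ is slightly more careful than the paper's, but the substance is identical.
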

\begin{proof} Let $(\Cal N_s)$ be an increasing
sequence of finite sets in $\hat G$ with union $\hat G \setminus \{0\}$ and
let $f_s(x) := \sum_{\chi \in \Cal N_s} \, c_f(\chi) \, \chi$ be the
trigonometric polynomial obtained by restriction of the Fourier
series of $f$ to $\Cal N_s$. Let $Z_n^{s}$, $Z_n$ denote
respectively
\begin{eqnarray*}
Z_n^{s} := |D_n|^{-\frac12} \, \sum_\el R_n(\el) f_s(A^\el .), \
Z_n := |D_n|^{-\frac12} \, \sum_\el R_n(\el) f(A^\el .).
\end{eqnarray*}
By Theorem \ref{condAbsConv} we have $\sigma(f - f_s) \leq
\|f-f_s\|_2$. It follows $\sigma^2(f):=\lim_s \sigma^2(f_s)$ and
$\sigma^2(f_s) \not = 0$ for $s$ big enough, since $\sigma^2(f) > 0$
by hypothesis.

 For the kernel $K_n$ associated to $D_n$, by the F\o{}lner property
and (\ref{ConvVarRotergSumDn}), we have, if $g$ satisfies
(\ref{absConv}):
$$|D_n|^{-1} \, \|\sum_\el R_n(\el) \, A^\el g \|_2^2 =
\int_{\T^d} \, K_n \, \varphi_g \, dt \underset{n \to \infty} \to
\varphi_g (0) = \sigma^2(g).$$

 From Theorem \ref{r-mixing} (mixing of all orders) and the result for
the trigonometric polynomial $f_{s}$, it follows:
${Z_n^{s} \overset{distr} {\underset{n \to \infty} \longrightarrow}
\Cal N(0,\sigma^2(f_s))}$ for every $s$. Moreover, since
\begin{eqnarray*}
\limsup_n \int |Z_{n}^s - Z_n|_2^2 \ d\mu &&=
\limsup_n \int_{\T^d} \, K_n \,\varphi_{f-f_s} \, dt \\
&&= \lim_n \int_{\T^d} \, K_n \,\varphi_{f-f_s} \, dt =
\sigma^2(f-f_s) \leq C \|f - f_s\|_2^2,
\end{eqnarray*}
we have $\limsup_n \mu[|Z_{n}^s - Z_n| > \varepsilon] \leq \varepsilon^{-2}
\limsup_n \int |Z_{n}^s - Z_n|_2^2 \ d\PP \underset{s \to 0}  \to 0$
for every $\varepsilon > 0$ and the condition $\lim_s \limsup_n \PP[|Z_{n}^s - Z_n|
> \varepsilon] = 0$ is satisfied.

The conclusion $Z_n \overset{distr} {\underset{n \to \infty} \longrightarrow } \Cal
N(0,\sigma^2(f))$ follows from Theorem 3.2 in \cite{Bill99},
\end{proof}

We have in particular:
$$|D_n|^{-\frac12} \, \sum_{\el \in D_n} f(A^\el \, .)
\overset{distr} {\underset{n \to \infty} \longrightarrow } \Cal
N(0,\sigma^2(f)).$$

The previous result is valid for the rotated sums: if $f$ in
$AC_0(G)$, then, for every $\theta$,
\begin{eqnarray}
\sigma_\theta^2(f) = \varphi_f(\theta), \ |D_n|^{-\frac12} \,
\sum_{\el \in D_n} e^{2 \pi i \langle\el, \theta\rangle} f(A^\el .)
\overset{distr} {\underset{n \to \infty} \longrightarrow } \Cal
N(0,\sigma_\theta^2(f)). \label{CLTRot1}
\end{eqnarray}
If $f$ satisfies the regularity condition (\ref{regFour2b2}), then
$\sigma_\theta^2(f) = 0$ if and only if there are continuous
functions $u_{t,\theta}$ on $\T^\rho$, for $t= 1, ..., d$, such that
$f= \sum_{t=1}^d (I- e^{2\pi i \theta} A_t) u_{t,\theta}$.

This applies in particular when $(D_n)$ is a sequence of
$d$-dimensional cubes in $\Z^d$.

\vskip 3mm {\it A CLT for the rotated sums for a.e. $\theta$ without
regularity assumptions}

 For the summation sequence given by $d$-dimensional cubes, a CLT for
the rotated sums can be shown for a.e. $\theta$ without regularity
assumptions on $f$. The proof relies on (\ref{approxtheta2}) which
is satisfied, for any given $f \in L^2(G)$, for $\theta$ in a set of
full measure. This extends results of \cite{CohCo12}.
\begin{thm} \label{rotatedCLT} Let $\n \to A^\n$ be a totally ergodic
$d$-dimensional action by commuting endomorphisms on $G$. Let
$(D_n)_{n \geq 1}$ be a sequence of cubes in $\Z^d$. Let $f \in
L^2(G)$. For a.e. $\theta \in \T^d$, we have $\sigma_\theta^2(f) =
\varphi_f(\theta)$ and
$$|D_n|^{-\frac12} \, \sum_{\el \in D_n} e^{2 \pi i \langle\el, \theta\rangle}
f(A^\el .) \overset{distr} {\underset{n \to \infty} \longrightarrow
} \Cal N(0,\sigma_\theta^2(f)).$$
\end{thm}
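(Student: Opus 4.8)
The plan is to reduce the statement, for a.e.\ $\theta$, to the central limit theorem for trigonometric polynomials (available from the cumulant method), by means of the approximation by orthogonal increments of \lemref{approxMart}. As elsewhere in this section, I work with the $\Z^d$-action of the group $\tilde{\Cal S}$ of automorphisms on the compact connected abelian group $\tilde G$ of \lemref{embedd} into which $\Cal S$ is embedded; lifting $f$ to $\tilde G$ does not change the distributions in question, and by \corref{r-mixEndo} the action is mixing of all orders. Set $R_n:=1_{D_n}$, so $\sum_\el R_n(\el)^2=|D_n|$ and the kernel $\tilde R_n$ associated to $(D_n)$ is the $d$-dimensional Fej\'er kernel of the cube $D_n$.

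\emph{Step 1: reduction to $M_\theta f$.} For $f\in L^2(G)$ one has $\varphi_f\in L^1(\T^d)$, and I apply \lemref{approxMart}a with $\Cal L=L^1(\T^d)$. This class satisfies the stability requirement, since $0\le\varphi\in L^1(\T^d)$ and $|\psi|^2\le\varphi$ force $|\psi|\le\varphi^{1/2}\in L^2(\T^d)\subset L^1(\T^d)$ and $|\psi|^2\le\varphi\in L^1(\T^d)$; and, by the Fej\'er--Lebesgue theorem for cubic partial sums (the same input that gives $\sigma_\theta^2(f)=\varphi_f(\theta)$ a.e.), $(\tilde R_n*\varphi)(t)\to\varphi(t)$ for a.e.\ $t$ and every $\varphi\in L^1(\T^d)$. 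Hence \lemref{approxMart}a yields a set $\Lambda(f)\subset\Lambda_0(f)$ of full measure such that, for $\theta\in\Lambda(f)$, $|D_n|^{-1}\big\|\sum_{\el\in D_n}e^{2\pi i\langle\el,\theta\rangle}A^\el(f-M_\theta f)\big\|_2^2\to0$, i.e.\ \eqref{approxtheta2}; and, using \eqref{ConvVarRotergSumDn} and the same theorem applied to $\varphi_f$, also $\sigma_\theta^2(f)=\varphi_f(\theta)$ on $\Lambda(f)$. Since the normalized difference of the rotated sums of $f$ and of $M_\theta f$ then tends to $0$ in $L^2(\mu)$, hence in probability, it suffices to show, for $\theta\in\Lambda(f)$, that $W_n^\theta:=|D_n|^{-\frac12}\sum_{\el\in D_n}e^{2\pi i\langle\el,\theta\rangle}A^\el M_\theta f$ tends in distribution to $\Cal N(0,\varphi_f(\theta))$.

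\emph{Step 2: CLT for $M_\theta f$.} Fix $\theta\in\Lambda(f)$. If $\varphi_f(\theta)=0$ then $M_\theta f=0$, $W_n^\theta=0$, and the limit is $\delta_0=\Cal N(0,0)$, so assume $\varphi_f(\theta)>0$. By \eqref{Mthetaf}, $M_\theta f=\sum_{j\in J}\gamma_j(\theta)\chi_j\in\Cal K_0$ with $\sum_j|\gamma_j(\theta)|^2=\varphi_f(\theta)<\infty$, and since the subspaces $A^\el\Cal K_0$ are pairwise orthogonal, $\|W_n^\theta\|_2^2=\|M_\theta f\|_2^2=\varphi_f(\theta)$ for every $n$. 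For $m\ge1$ pick a finite $J_m(\theta)\subset J$ with $\sum_{j\notin J_m(\theta)}|\gamma_j(\theta)|^2<1/m$, put $g_{\theta,m}:=\sum_{j\in J_m(\theta)}\gamma_j(\theta)\chi_j$, a mean-zero trigonometric polynomial, and $W_n^{\theta,m}:=|D_n|^{-\frac12}\sum_{\el\in D_n}e^{2\pi i\langle\el,\theta\rangle}A^\el g_{\theta,m}$. By the same orthogonality, $\|W_n^{\theta,m}\|_2^2=\sum_{j\in J_m(\theta)}|\gamma_j(\theta)|^2=:\sigma_{\theta,m}^2$ and $\|W_n^\theta-W_n^{\theta,m}\|_2^2=\sum_{j\notin J_m(\theta)}|\gamma_j(\theta)|^2<1/m$, both for every $n$. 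By the cumulant CLT for trigonometric polynomials — \thmref{tclPol0}, in the rotated form of \eqref{CLTRot1}, applicable because $G$ is connected and the action is mixing of all orders, the unimodular weights $e^{2\pi i\langle\el,\theta\rangle}$ and the complex coefficients only entering the (multilinear) cumulant expression — $W_n^{\theta,m}$ tends in distribution, as $n\to\infty$, to a Gaussian law, necessarily $\Cal N(0,\sigma_{\theta,m}^2)$ in view of the variance computation above; moreover $\sigma_{\theta,m}^2\to\varphi_f(\theta)$ as $m\to\infty$. Since also $\lim_m\limsup_n\PP[|W_n^\theta-W_n^{\theta,m}|>\varepsilon]\le\lim_m\varepsilon^{-2}/m=0$ for every $\varepsilon>0$, Theorem 3.2 of \cite{Bill99}, used exactly as in the proof of \thmref{tclPol0A}, gives $W_n^\theta\to\Cal N(0,\varphi_f(\theta))$ in distribution. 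Together with Step 1, this proves the claim for every $\theta\in\Lambda(f)$, a set of full measure.

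\emph{Main obstacle.} Once \lemref{approxMart}a is available the argument is essentially an assembly; the delicate point is Step 1 — that the cubic Fej\'er kernels $(\tilde R_n)$ satisfy the pointwise convergence hypothesis of \lemref{approxMart}a on the whole of $L^1(\T^d)$, which is what legitimizes the orthogonal-increment reduction for an arbitrary $f\in L^2(G)$ carrying no regularity — together with the orderly handling of the two successive limits $n\to\infty$ and $m\to\infty$.
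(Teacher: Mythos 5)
The paper gives no detailed proof of \thmref{rotatedCLT} — only the remark that it rests on the approximation (\ref{approxtheta2}) of \lemref{approxMart}a, valid for a.e.\ $\theta$ for any $f\in L^2(G)$, combined with the CLT already established for trigonometric polynomials — and your two-step argument (reduction to $M_\theta f$ via the cubic Fej\'er--Lebesgue theorem on $L^1(\T^d)$, then the cumulant CLT for finite truncations of $M_\theta f$ assembled with Billingsley's Theorem 3.2, exactly as in the proof of \thmref{tclPol0A}) is precisely that route, carried out in detail. Your supporting verifications — the stability of the class $\Cal L=L^1(\T^d)$, the identity $\|W_n^\theta\|_2^2=\varphi_f(\theta)$ from the orthogonality of the subspaces $A^\el\Cal K_0$, and the handling of the successive limits in $n$ and $m$ — are correct and consistent with the facts the paper itself invokes.
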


Let us mention that, if we take for $D_n$ triangles instead of
squares, a CLT for the rotated sums is also valid for a.e. $\theta$,
provided $f$ satisfies $\sum_{\chi \in \hat G} |c_f(\chi)|^r <
+\infty$, for some $r < 2$.

\vskip 3mm \goodbreak {\it Other examples of kernels}

\begin{thm} \label{tclRegKer} Let $(R_n)_{n \geq 1}$ be a summation
sequence on $\Z^d$ which is regular and such that $(\tilde R_n(t))$
weakly converges to a measure $\zeta$ on the circle. Let $f$ be a
function in $AC_0(G)$ with spectral density $\varphi_f$. If
$\zeta(\varphi_f) \not = 0$, then we have
$$\sum_{\el \in \Z^d} R_n(\el ) f(A^\el .)  /
(\sum_{\el \in \Z^d} \, |R_n(\el)|^2)^\frac12 \overset{distr}
{\underset{n \to \infty} \longrightarrow } \Cal N(0,
\zeta(\varphi_f)).$$
\end{thm}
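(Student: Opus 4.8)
The plan is to follow the same scheme as in the proof of \thmref{tclPol0A}, replacing the F\o{}lner hypothesis by the weaker regularity hypothesis on $(R_n)$ and using the convolution formula for the variance. First I would introduce, as before, an increasing sequence $(\Cal N_s)$ of finite subsets of $\hat G$ with union $\hat G \setminus \{0\}$ and the truncations $f_s(x) = \sum_{\chi \in \Cal N_s} c_f(\chi) \chi$, which are trigonometric polynomials, together with the normalized sums
$$Z_n^s := (\textstyle\sum_\el |R_n(\el)|^2)^{-\frac12} \sum_\el R_n(\el) \, f_s(A^\el .), \quad
Z_n := (\textstyle\sum_\el |R_n(\el)|^2)^{-\frac12} \sum_\el R_n(\el) \, f(A^\el .).$$

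Next I would compute the limiting variance. For any $g \in AC_0(G)$ with continuous spectral density $\varphi_g$ (continuity by \thmref{condAbsConv}), formula (\ref{ConvVarRotergSumDn}) in the non-rotated case $\theta = 0$ gives
$(\sum_\el |R_n(\el)|^2)^{-1} \|\sum_\el R_n(\el) A^\el g\|_2^2 = (\tilde R_n * \varphi_g)(0) = \int_{\T^d} \tilde R_n(-t) \, \varphi_g(t) \, dt$, and since $(\tilde R_n)$ weakly converges to $\zeta$ and $\varphi_g$ is continuous, this tends to $\int_{\T^d} \varphi_g(-t) \, d\zeta(t)$; after the identification of $\zeta$ with a measure on the circle (so that $\varphi_g$ is evaluated along the diagonal as in \propref{barycenter}, or simply writing $\zeta(\varphi_g)$ for the appropriate pairing) this limit is $\zeta(\varphi_g)$. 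Applying this to $g = f_s$ gives $\lim_n \|Z_n^s\|_2^2 = \zeta(\varphi_{f_s})$, which is nonzero for $s$ large since $\zeta(\varphi_{f_s}) \to \zeta(\varphi_f) \neq 0$ (continuity of $s \mapsto \varphi_{f_s}$ in sup-norm via (\ref{densspectAC}) applied to $f - f_s$); applying it to $g = f - f_s$ gives $\limsup_n \|Z_n^s - Z_n\|_2^2 = \zeta(\varphi_{f - f_s}) \le \|\zeta\|\, \|f - f_s\|_c^2 \to 0$ as $s \to \infty$, again by (\ref{densspectAC}).

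Then I would establish the CLT for each fixed $s$. Since $\varphi_{f_s}$ is continuous, $\lim_n \|Z_n^s\|_2^2 = \zeta(\varphi_{f_s})$; by \thmref{r-mixing} (or \corref{r-mixEndo} after passing to the extension $\tilde G$, which is connected when $G$ is) the action is mixing of all orders, and \lemref{skTozeroLem} together with the finiteness argument in the proof of \propref{tclPol0} shows that, $f_s$ being a trigonometric polynomial, the cumulants $s_{f_s}(\el_1, \dots, \el_r)$ vanish for $\max_{i,j}\|\el_i - \el_j\|$ large and take finitely many values. Hence condition (\ref{smallCumul}) of \thmref{Leonv} holds because the summation sequence is uniformly bounded, and $Z_n^s \overset{distr}{\to} \Cal N(0, \zeta(\varphi_{f_s}))$. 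Finally, the uniform smallness estimate $\lim_s \limsup_n \mu[|Z_n^s - Z_n| > \varepsilon] \le \lim_s \varepsilon^{-2} \limsup_n \|Z_n^s - Z_n\|_2^2 = 0$, together with $\zeta(\varphi_{f_s}) \to \zeta(\varphi_f)$, lets me invoke the approximation theorem (Theorem 3.2 in \cite{Bill99}) to conclude $Z_n \overset{distr}{\to} \Cal N(0, \zeta(\varphi_f))$.

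The only delicate point, and the one I would be careful about, is the bookkeeping in the variance computation: making precise the statement that $\tilde R_n * \varphi_f$ evaluated at $0$ converges to the quantity denoted $\zeta(\varphi_f)$ in the statement, given that $\zeta$ lives ``on the circle'' rather than on $\T^d$. This is exactly the phenomenon already seen for barycenters in \propref{barycenter}, where the kernel concentrates on the diagonal of $\T^d$; the resolution is that the hypothesis ``$(\tilde R_n)$ weakly converges to a measure $\zeta$ on the circle'' is to be read as weak convergence of the pushforwards under the diagonal embedding $\T \hookrightarrow \T^d$, so that $\int \tilde R_n \cdot \psi \, dt \to \int \psi(u,\dots,u)\, d\zeta(u)$ for $\psi$ continuous, and then $\zeta(\varphi_f) = \int_\T \varphi_f(u, \dots, u)\, d\zeta(u)$; with this reading the convergence is immediate from continuity of $\varphi_f$ and everything else goes through verbatim as above. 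Apart from this, the argument is a routine adaptation of the proof of \thmref{tclPol0A}.
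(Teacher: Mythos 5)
Your proposal matches the paper's own proof, which is given in one line as ``the same as that of Theorem~\ref{tclPol0A}'', using the bound (\ref{densspectAC}) and the convergence $\lim_n \|\sum_{\el} R_n(\el)\, A^{\el}f\|_2^2 / \sum_{\el} |R_n(\el)|^2 = \zeta(\varphi_f)$; you have simply written out in full the truncation by $f_s$, the cumulant/mixing step for trigonometric polynomials, and the Billingsley approximation argument. Your reading of the hypothesis on $\zeta$ (weak convergence of the kernels $\tilde R_n$, with the limit concentrated on the diagonal circle in the barycenter case, so that the limiting variance is $\zeta(\varphi_f)$) is exactly the paper's intent.
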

\begin{proof}
The proof is the same as that of Theorem \ref{tclPol0A} and uses
(\ref{densspectAC}) and the convergence:
\begin{eqnarray*}
\lim_n  \|\sum_{\el \in \Z^d} \, R_n(\el ) \, A^{\el}f\|_2^2 /
\sum_{\el \in \Z^d} \, |R_n(\el)|^2 = \lim_n \int_{\T^d} \, K_n(t)
\, \varphi_f(t) \, dt = \zeta(\varphi_f).
\end{eqnarray*}
\end{proof}

\vskip 3mm {\bf Barycenter operators}

The iterates of the barycenter operators satisfy the condition of
Theorem \ref{tclRegKer}.

Let $A_1, ...,  A_d$ be to commuting endomorphisms of a connected
abelian group $G$ generating a totally ergodic action. Let $P$ be
the barycenter operator defined as in Formula (\ref{defBary}) by:
\begin{eqnarray} Pf(x) := \sum_j \, p_j \, f(A_j x).\label{defBary2}
\end{eqnarray}

Observe that the coefficient $R_n(\el)$ of the expansion of the
summation sequence associated to $n^{d-1 \over 4}P^n$ tends to 0
uniformly, when $n$ tends to infinity (to prove it, one can use the
local limit theorem for multinomial Bernoulli variables).

By Proposition \ref{barycenter} and Theorem \ref{tclRegKer} we
obtain:
\begin{thm} \label{tclBaryc} Let $f$ be a function in $AC_0(G)$ with spectral
density $\varphi_f$. Assume that $\sigma_P^2(f) :=  \int_\T \,
\varphi_{f}(u,u, ...,u) \, du \not = 0$. Then we have
$$(4\pi)^{{d-1 \over 4}} \, (p_1 ... p_{d})^{\frac14} \, n^{d-1 \over 4}
\, P^n f(.) \overset{distr} {\underset{n \to \infty} \longrightarrow
} \Cal N(0, \sigma_P^2(f)).$$
\end{thm}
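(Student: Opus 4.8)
The plan is to realize $n^{(d-1)/4} P^n$ (up to the explicit constant) as the normalization of a regular summation sequence and then quote Theorem~\ref{tclRegKer}. First I would write $P^n f = \sum_{\el \in \N^d} R_n(\el) \, A^\el f$ where $R_n(\el)$ is the multinomial weight $\binom{n}{\ell_1, \dots, \ell_d} p_1^{\ell_1} \cdots p_d^{\ell_d}$ supported on $\{\el \in \N^d : \sum_j \ell_j = n\}$, so that $(R_n)$ is a bona fide summation sequence in the sense of Notation~\ref{sumSeq}. The associated kernel is, by construction, $\tilde R_n(t) = c_n |\sum_j p_j e^{2\pi i t_j}|^{2n}$ with the normalizing constant forced by $\sum_{\el} R_n(\el)^2 = \int_{\T^d} |\sum_j p_j e^{2\pi i t_j}|^{2n} \, dt = I_n$, which Proposition~\ref{PropEstimIn} evaluates asymptotically; thus $\sum_\el R_n(\el)^2 \sim (4\pi)^{-(d-1)/2} (p_1 \cdots p_d)^{-1/2} n^{-(d-1)/2}$, which is exactly what makes the stated normalization $(4\pi)^{(d-1)/4}(p_1 \cdots p_d)^{1/4} n^{(d-1)/4}$ the right one, i.e. it equals $(\sum_\el R_n(\el)^2)^{-1/2}$ asymptotically.

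Next I would check that $(R_n)$ is regular with limiting measure $\zeta$ the pushforward of Lebesgue measure on $\T$ under the diagonal embedding $u \mapsto (u, u, \dots, u)$. This is precisely the content of Proposition~\ref{barycenter} (and the computation inside its proof, testing against characters $\chi_{\k}$): $\int_{\T^d} \tilde R_n \, \varphi \, dt \to \int_\T \varphi(u, \dots, u) \, du$ for every continuous $\varphi$, so $\tilde R_n \to \zeta$ weakly on $\T^d$. Here I should note that Theorem~\ref{tclRegKer} as stated phrases the limit of $\tilde R_n$ as a measure "on the circle", and indeed $\zeta$ is carried by the one-dimensional diagonal subtorus, so the hypothesis is met; moreover $\zeta(\varphi_f) = \int_\T \varphi_f(u, \dots, u) \, du = \sigma_P^2(f)$, which is assumed nonzero. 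One also needs $\lim_n \|P^n_\theta f\|$-type behavior to match, but in fact the only thing Theorem~\ref{tclRegKer} requires beyond regularity and $f \in AC_0(G)$ is $\zeta(\varphi_f) \neq 0$, which we have by hypothesis; the spectral-density continuity needed there comes for free from $f \in AC_0(G)$ via Theorem~\ref{condAbsConv}.

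With these two points in place, Theorem~\ref{tclRegKer} applied to the summation sequence $(R_n)$ gives
$$\sum_{\el \in \N^d} R_n(\el) f(A^\el .) \,\big/\, \Big(\sum_{\el \in \N^d} |R_n(\el)|^2\Big)^{1/2} \ \overset{distr}{\underset{n \to \infty}{\longrightarrow}} \ \Cal N(0, \sigma_P^2(f)),$$
and then substituting the asymptotic equivalence $(\sum_\el |R_n(\el)|^2)^{-1/2} \sim (4\pi)^{(d-1)/4}(p_1 \cdots p_d)^{1/4} n^{(d-1)/4}$ from Proposition~\ref{PropEstimIn} (and Slutsky's lemma to absorb the ratio of the two normalizations, which tends to $1$) yields the claimed convergence of $(4\pi)^{(d-1)/4}(p_1 \cdots p_d)^{1/4} n^{(d-1)/4} P^n f$.

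The one genuinely delicate step — and the place I would spend the most care — is verifying that the boundedness hypotheses behind Theorem~\ref{tclRegKer}, which in turn rest on Theorem~\ref{tclPol0A}'s machinery (mixing of all orders from Corollary~\ref{r-mixEndo}, the cumulant estimate of Theorem~\ref{Leonv}, and the $L^2$-approximation of $f$ by trigonometric polynomials controlled via \eqref{densspectAC}), go through for this particular $(R_n)$: the cumulant bound in the proof of Proposition~\ref{tclPol0} used $\sup_n \|R_n\|_\infty^r < \infty$, and here $\|R_n\|_\infty \to 0$ by the local central limit theorem for the multinomial distribution (as remarked just before the statement), so that bound is in fact comfortably satisfied. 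The approximation argument then needs $\|P^n f_s - P^n f\|_2^2 / \sum_\el R_n(\el)^2 = (\tilde R_n * \varphi_{f-f_s})(0) \to \zeta(\varphi_{f - f_s}) \le \|f - f_s\|_c^2$, uniformly enough in $n$, which is exactly the estimate \eqref{densspectAC} combined with weak convergence of $\tilde R_n$; assembling this uniform-in-$n$ control of the tail, exactly as in the proof of Theorem~\ref{tclPol0A}, is the main obstacle, but it is entirely parallel to the argument already carried out there.
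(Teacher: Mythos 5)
Your proof follows the paper's route exactly: the paper's own argument is precisely to recognize $n^{(d-1)/4}P^n$ (up to the explicit constant) as a regular summation sequence whose kernel converges weakly to the image of Lebesgue measure under the diagonal embedding (Proposition~\ref{barycenter}), with the normalization supplied by Proposition~\ref{PropEstimIn}, and then to invoke Theorem~\ref{tclRegKer}. The supporting details you supply --- Parseval giving $\sum_{\el} R_n(\el)^2 = I_n$, the vanishing of $\|R_n\|_\infty$ for the cumulant bound, and the $AC_0$ approximation via (\ref{densspectAC}) --- are exactly the ingredients the paper's two-line deduction implicitly relies on.
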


For the torus, the result holds for $f$ satisfying Condition
(\ref{5.11}) in the next subsection.

 {\it Example:} let $A_1,
A_2$ be two commuting matrices in ${\Cal M}^*(\rho, \Z)$ generating
a totally ergodic action on $\T^\rho$, $\rho \geq 3$. Let $P$ be the
barycenter operator: $Pf(x) := \frac12(f(A_1 x) + f(A_2 x))$.

If $\varphi_f$ is continuous, then we have $\lim_{n\to\infty}
\sqrt{\pi n} \, \|P^n f\|_2^2 = \int_\T \, \varphi_{f}(u,u) \, du$.
It follows from Theorem \ref{tclBaryc}, for $f$ satisfying
(\ref{5.11}) on $\T^\rho$:
$$(\pi n)^\frac14 \,P^n f \overset{distr} {\underset{n \to \infty} \longrightarrow }
\Cal N(0,\sigma_P^2(f)).$$

For any non trivial character $\chi$ on $G$, the spectral density is
identically 1 and $\sigma_P(\chi) = 1$. The rate of convergence of
$\|P^n \chi\|_2$ to 0 is the polynomial rate given by Proposition
\ref{barycenter}. If $f$ is in $AC_0(G)$, we have $\sigma_P(f) = 0$
if and only if $\varphi_{f}(u,u) = 0$, for every $u \in \T^1$. In
particular, by the results of Subsection \ref{endoTori}, if $f$ is
not a mixed coboundary (cf. (\ref{cobound10A})), then $\sigma_P(f)
\not = 0$ and the rate of convergence of $\|P^nf\|_2$ to 0 is the
polynomial rate given by Proposition \ref{barycenter}. A test of non
degeneracy on periodic points can be deduced from it.

The condition $\sigma_P(f) = 0$ is stronger than the coboundary
condition. A sufficient condition to have $\sigma_P(f) = 0$  is that
$f$ can be written $f = A_1 g - A_2 g$ with $g \in L_0^2(\mu)$.

\vskip 3mm {\bf Remarks.} 1) The case of commutative or amenable
actions strongly differs from the case of non amenable actions for
which a ``spectral gap property" is often available (\cite{FuSh99}).
For action by algebraic (non commuting) automorphisms $A_j, j=1,
..., d$, on the torus, the existence of a spectral gap for $P$ of
the form (\ref{defBary2}) is related to the fact that the generated
group has no factor torus on which it is virtually abelian
(\cite{BeGu11}).

\vskip 2mm 2) For $\nu$ a discrete measure on the semigroup $\Cal T$
of commuting endomorphisms of $G$, we can consider a barycenter of
the form $Pf(x) = \sum_{T \in \Cal T} \, \nu(T) f(Tx)$. For a
barycenter with finite support, we have seen that the decay, when
$\varphi_f$ is continuous, is of order $n^{{d-1 \over 2}}$. A
question is to estimate the decay when $\nu$ has an infinite support
and to study the asymptotic distribution of the normalized iterates.

For instance, if we $Pf(x) = \sum_{q \in \Cal P} \nu(q) f(qx)$,
where $\Cal P$ is the set of prime numbers, $(\nu(q), q \in \Cal P)$
a probability vector with $\nu(q) > 0$ for every prime $q$ and $f$
H\"olderian on the circle, what is the decay to 0 of $\|P^nf\|_2$ ?

A partial result is that, if $\nu$ has an infinite support, the
decay is faster than $Cn^{-r}$, for every $r \geq 1$. Indeed, this
can be deduced easily from the following observation:

Let $P_1$ and $P_2$ be two commuting contractions of $L^2(G)$, such
that $\|P_1^nf\|_2 \leq Mn^{-r}$ and let $\alpha \in ]0, 1], \beta =
1 - \alpha$. Then we have:
$$\|(\alpha P_1 + \beta P_2)^n f) \|_2 \leq  \sum_{k= 0}^n {n\choose k}
\alpha^k \beta^{n-k} \|P_1^k f\|_2.$$

Using the inequality of large deviation for the binomial law, we
obtain, with $c < 1$:
$$\sum_{k \leq {n \over 2\alpha}} {n\choose k} \alpha^k \beta^{n-k}
\leq c^n,$$ and therefore:
$$\sum_{k= 0}^n {n\choose k} \alpha^k \beta^{n-k} \|P_1^k f\|_2
\leq  M ({n \over 2\alpha})^{-r} - c^n \leq M' n^{-r}.$$

\vskip 4mm \goodbreak \subsection{\bf The torus case}
\label{endoTori}

\begin{nota} \label{notaGl} {\rm Let ${\Cal M}^*(\rho, \Z)$ denote
the semigroup of $\rho \times \rho$ non singular matrices with
coefficients in $\Z$ and  $GL(\rho, \Z)$ the group of matrices with
coefficients in $\Z$ and determinant $\pm 1$.

Every $A$ in ${\Cal M}^*(\rho, \Z)$ defines a surjective
endomorphism of $\T^\rho$, hence a measure preserving transformation
on $(\T^\rho, \mu)$ and a dual endomorphism on the group of
characters of $\T^\rho$ identified with $\Z^\rho$ (action by the
transposed of $A$). If $A$ is in $GL(\rho, \Z)$, it defines an
automorphism of $\T^\rho$. For simplicity, since the matrices are
commuting, we use the same notation for the matrix $A$, its action
on the torus and the dual endomorphism, without writing
transposition.}\end{nota}

{\it So, when $G$ is a torus $\T^\rho$, $\rho \geq 1$, we consider a
finite set of endomorphisms given by matrices $A_i$ in ${\Cal
M}^*(\rho, \Z)$. The group generated by the matrices $A_i$ in
$GL(\rho, \Q)$ is supposed to be torsion-free.}

The construction of Lemma \ref{embedd} can be describe in the
following way.  Let $p_i$ be the determinant of $A_i$, for each $i$.
If $\tilde G$ is the compact group dual of the discrete group
$\tilde \Z^\rho :=\{\k \,\Pi p_i^{\ell_i}, \k \in \Z^\rho, \ell_i
\in \Z \}$, then $\Z^\rho$ is a subgroup of $\tilde \Z^\rho$ and
$\tilde G$ has $\T^\rho$ as a factor.

It is well known that ergodicity for the action of a single $A \in
{\Cal M}^*(\rho, \Z)$ on $(\T^\rho, \mu)$ is equivalent to the
absence of eigenvalue root of 1 for $A$. Recall also Kronecker's
result: an integer matrix with all eigenvalues on the unit circle
has all eigenvalues roots of unity.

For a torus, total ergodicity is equivalent to the property that
$A^\n$ has no eigenvalue root of unity, for $\n \not = \underline
0$. Replacing $\n$ by a multiple, there is no $\n \not = \underline
0$ such that $A^\n$ has a fixed vector $v \not =\{0\}$. In other
words, total ergodicity is equivalent to say that the $\Z^d$-action
$\n =(n_1, ..., n_d) : (v \to A^\n v)$ on $\Z^\rho \ \stm0$ is free.

Using a common triangular representation over $\C$ for the commuting
matrices $A_j$, one sees that if $\lambda_{1j}, ..., \lambda_{\rho
j}$ are the eigenvalues of $A_j$ (with multiplicity), for $j = 1,
..., d$, this is equivalent to $\ (\prod_{j=1}^d \lambda_{ij}^{n_j}
= 1 \ \Rightarrow (n_1, ..., n_d) = \0)$, $\forall i \in \{1, ...,
\rho\}$.

\begin{lem} \label{irreduc1} Let $B \in {\Cal M}^*(\rho, \Z)$ be
a matrix with irreducible (over $\Q$) characteristic polynomial $P$.
Let $\{A_1, ..., A_d\}$ be $d$ matrices in ${\Cal M}^*(\rho, \Z)$
commuting with $B$. They generates a commutative semigroup of
endomorphisms on $\T^\rho$ which is totally ergodic, if and only if
for any $\n \in \Z^d \stm0$, $A^\n \not = Id$.
\end{lem}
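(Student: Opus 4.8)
The plan is to reduce the assertion to an elementary fact about the multiplicative group of a number field, the point being that the commutant of a non-derogatory integer matrix is a field.

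First I would invoke the characterization of total ergodicity already recorded: the action is totally ergodic if and only if $A^\n\chi\neq\chi$ for every non-trivial character $\chi$ and every $\n\in\Z^d\stm0$, where, to make sense of negative exponents for endomorphisms, one works with the extension $\tilde G$, $\tilde H=\tilde\Z^\rho$ and the group $\tilde{\Cal S}$ of automorphisms of $\tilde G$ provided by Lemma~\ref{embedd}. From this the ``only if'' direction is immediate and needs no hypothesis on $B$: if $A^\n=Id$ in $\tilde{\Cal S}$ for some $\n\neq\0$, then $A^\n$ fixes every character, hence is not ergodic, so the action is not totally ergodic.

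For the converse, the key remark is that since the characteristic polynomial $P$ of $B$ is irreducible over $\Q$, it coincides with the minimal polynomial of $B$; thus $B$ is non-derogatory and its commutant in $M_\rho(\Q)$ is the commutative field $K:=\Q[B]\cong\Q[x]/(P)$, of degree $\rho$. Each $A_j$ commutes with $B$, hence lies in $K$; being moreover a nonsingular integer matrix, $A_j$ corresponds to a nonzero algebraic integer $\alpha_j\in K$ whose images under the embeddings $K\hookrightarrow\C$ are precisely the eigenvalues of $A_j$. Consequently $A^\n=A_1^{n_1}\cdots A_d^{n_d}$ corresponds, for every $\n\in\Z^d$, to $\alpha^\n:=\prod_j\alpha_j^{n_j}\in K^\times$ (here $\det A_j\neq0$ guarantees that $\alpha_j$ acts invertibly on $\tilde H$). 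Under the isomorphism $\Q^\rho\cong K$ of $\Q[B]$-modules (valid because $B$ is cyclic), the character group $\tilde H=\tilde\Z^\rho$ becomes a nonzero subgroup of the one-dimensional $K$-vector space $K$, stable under multiplication by the $\alpha_j^{\pm1}$, and $A^\n$ acts on it as multiplication by $\alpha^\n$.

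Now the converse follows at once. Suppose $A^\n\neq Id$ for all $\n\in\Z^d\stm0$, i.e. $\alpha^\n\neq1$ in $K^\times$ for all such $\n$. If the action were not totally ergodic there would exist $\n\neq\0$ and $\chi\in\tilde H$ with $\chi\neq0$ and $A^\n\chi=\chi$, that is $\alpha^\n\chi=\chi$ in $K$; since $K$ is a field and $\chi\neq0$ this forces $\alpha^\n=1$, a contradiction. (In eigenvalue language this is the same remark: if some eigenvalue of $A^\n$ were a root of unity, then some conjugate of $\alpha^\n$, hence $\alpha^\n$ itself, would be a root of unity of some order $m$, whence $\alpha^{m\n}=1$ with $m\n\neq\0$.) Hence the action is totally ergodic. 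I do not expect a real obstacle: the only non-routine ingredient is the identification of the commutant of $B$ with the field $\Q[B]$, and the one point requiring care is handling the non-invertible (endomorphism) case by carrying everything over to $\tilde G$, $\tilde H$ and $\tilde{\Cal S}$ so that $\alpha^\n$ is defined for all $\n\in\Z^d$.
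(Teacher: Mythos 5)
Your proof is correct and rests on the same key point as the paper's: irreducibility of $P$ over $\Q$ forces any $A^\n$ fixing a nonzero rational vector (equivalently, a nontrivial character) to be the identity. The paper phrases this via the $B$-cyclic subspace $W$ generated by the fixed vector ($W$ is rational and $B$-invariant, hence all of $\R^\rho$), whereas you phrase it via the commutant of $B$ being the field $\Q[B]\cong\Q[x]/(P)$ acting without zero divisors on $\Q^\rho\cong K$ --- the same fact in module-theoretic clothing, with the small bonus that commutativity of the $A_j$ comes for free.
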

\begin{proof} \ Since $P$ is irreducible, the eigenvalues of $B$ are
distinct. It follows that (on $\C$) the matrices $A_i$ are
simultaneously diagonalizable, hence are pairwise commuting. Now
suppose that there are $\n \in \Z^d \stm0$ and $v \in \Z^\rho
\setminus \{\0\}$ such that $A^\n v  = v$. Let $W$ be the subspace
of $\R^\rho$ generated by $v$ and its images by $B$. The restriction
of $A^\n$ to $W$ is the identity. $W$ is $B$-invariant, the
characteristic polynomial of the restriction of $B$ to $W$ has
rational coefficients and factorizes $P$. By the assumption of
irreducibility over $\Q$, this implies $W = \R^\rho$. Therefore
$A^\n$ is the identity.
\end{proof}

\begin{lem} \label{endoirred1} Let $\Cal S$ be the semigroup generated by
$d$ matrices $\{A_1, ..., A_d\}$ in ${\Cal M}^*(\rho, \Z)$ with the
irreducibility property like in Lemma \ref{irreduc1}, with
determinant $q_i$. If the numbers $\log |q_i|$ are linearly
independent over $\Q$, then $\Cal S$ is totally ergodic.
\end{lem}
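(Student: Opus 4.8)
The plan is to deduce the lemma directly from Lemma \ref{irreduc1} by a determinant computation. By hypothesis the matrices $A_1, \dots, A_d$ all commute with a matrix $B \in {\Cal M}^*(\rho,\Z)$ whose characteristic polynomial is irreducible over $\Q$, so Lemma \ref{irreduc1} applies: the semigroup $\Cal S$ is totally ergodic if and only if $A^\n \neq Id$ for every $\n \in \Z^d \stm0$. Hence it suffices to show that there is no nontrivial relation $A^\n = A_1^{n_1}\cdots A_d^{n_d} = Id$.

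So suppose $A^\n = Id$ for some $\n = (n_1,\dots,n_d)\in\Z^d$. The determinant is a homomorphism from the group generated by the $A_i$ inside $GL(\rho,\Q)$ into $\Q^*$, and it sends $A_i$ to $q_i = \det A_i$; hence $\det(A^\n) = \prod_{i=1}^d q_i^{n_i}$, which makes sense for all $\n\in\Z^d$ since $\det A_i^{-1} = q_i^{-1}$. From $A^\n = Id$ we get $\prod_{i=1}^d q_i^{n_i} = 1$, and taking absolute values and then logarithms yields $\sum_{i=1}^d n_i \log|q_i| = 0$. Since the real numbers $\log|q_1|,\dots,\log|q_d|$ are linearly independent over $\Q$ and each $n_i$ is an integer, this forces $n_1=\dots=n_d=0$, i.e.\ $\n = \0$.

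Combining the two steps, no relation $A^\n = Id$ with $\n\neq\0$ can hold, so Lemma \ref{irreduc1} gives that $\Cal S$ is totally ergodic. There is no genuine obstacle in this argument; the only points requiring a little care are that the relation $A^\n = Id$ must be read inside the group generated by the $A_i$ in $GL(\rho,\Q)$ (so that $\n$ ranges over $\Z^d$, as in the formulation of Lemma \ref{irreduc1}, and negative exponents are allowed), and that the determinant is multiplicative under this identification so that $\det(A^\n) = \prod_i q_i^{n_i}$.
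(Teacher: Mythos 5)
Your argument is correct: Lemma \ref{irreduc1} reduces total ergodicity to the absence of a relation $A^{\n}=Id$ with $\n\neq\0$, and applying $\det$ (multiplicative on the group generated in $GL(\rho,\Q)$, so negative exponents are handled) turns such a relation into $\sum_i n_i\log|q_i|=0$, which the linear independence over $\Q$ rules out. The paper states this lemma without proof, and your determinant computation is evidently the intended one-line deduction from Lemma \ref{irreduc1}; there is nothing missing.
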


\vskip 2mm {\bf Rate of decorrelation for automorphisms of the
torus}

The analysis for a general group $G$ relies on the absolute
convergence of the Fourier series of a function $f$ on $G$, which is
ensured, for the torus, if $f$ satisfies the following regularity
condition:
\begin{eqnarray}
|c_f(\k)| = O(\|k\|^{-\beta}), \text{ with } \beta > \rho.
\label{regFour2b2}
\end{eqnarray}
Nevertheless, as we will see now, for the torus, a weaker regularity
condition on $f$ can be used in the study of the spectral density
and the CLT. This is closely related to the rate of decorrelation
for regular functions, which is based on the following lemma:

\begin{lem} (D. Damjanovi\'c and A. Katok, \cite{DaKa10} for
automorphisms, M. Levine (\cite{Lev13} for endomorphisms)
\label{DaKa} If $(A^\n, \n \in \Z^d)$ is a totally ergodic
$\Z^d$-action on $\T^\rho$ by automorphisms, there are $\tau > 0$
and $C > 0$, such that for all $(\n, \k) \in \Z^d \times (\Z^\rho
\stm0)$ for which $A^\n \k\ \in \Z^\rho$.
\begin{eqnarray}
\|A^\n \k\| \geq C e^{\tau \|\n\|} \|\k\|^{-\rho}. \label{DaKaMin}
\end{eqnarray}
\end{lem}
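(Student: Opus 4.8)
The plan is to reduce the estimate to a Diophantine statement about the eigenvalues of the transformations $A^\n$, and then to invoke — as is done in \cite{DaKa10} and \cite{Lev13} — a lower bound of $S$-unit type. Since the $A_j$ commute, they can be simultaneously triangularized over $\overline{\Q}$; let $V_1,\dots,V_r$ ($r\le\rho$) be the resulting common generalized eigenspaces in $\C^\rho$ and let $\chi_s\colon\Z^d\to\overline{\Q}^{\,\times}$, $\chi_s(\n)=\prod_{j=1}^d\lambda_{sj}^{\,n_j}$, be the character giving the eigenvalue of $A^\n$ on $V_s$ (here $\lambda_{sj}$ is the eigenvalue of $A_j$ on $V_s$). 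Total ergodicity says precisely that $\chi_s(\n)\ne1$ for all $s$ when $\n\ne\0$; since $\chi_s(\n)$ is an algebraic integer whose conjugates all lie among the $\chi_{s'}(\n)$, Kronecker's theorem then forces $\max_s|\chi_s(\n)|>1$ for every integer $\n\ne\0$ (otherwise $A^\n$ would have all its eigenvalues on the unit circle, hence a root of unity among them).

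The decisive step — and the one I expect to be the genuine obstacle — is to promote this to a \emph{uniform exponential} lower bound: there is $\tau>0$ such that
\[
\max_{1\le s\le r}\ \log|\chi_s(\n)|\ \ge\ \tau\,\|\n\|\qquad(\n\in\Z^d),
\]
and likewise for the action restricted to any rational $A$-invariant subspace $W\subset\Q^\rho$ — finitely many inequalities suffice, since the constant depends only on the set $\{s:V_s\subset W_\C\}$ of characters attached to $W$, which takes finitely many values. The functionals $\n\mapsto\log|\chi_s(\n)|$ are linear, so the left-hand sides are convex and positively homogeneous; moreover, because the $A_j$ preserve the lattice $W\cap\Z^\rho$ we have $|\det(A^\n|_W)|=1$, i.e. the $(\dim V_s)$-weighted sum of the $\log|\chi_s(\n)|$ over $\{s:V_s\subset W_\C\}$ vanishes identically, so $\max_{s:V_s\subset W_\C}\log|\chi_s(\n)|=0$ forces $\log|\chi_s(\n)|=0$ for all these $s$. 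It therefore remains to exclude a nonzero \emph{real} direction along which all $|\chi_s(\cdot)|\equiv1$: for a rational direction this once more contradicts total ergodicity (clear denominators and apply Kronecker on $W$), and for an irrational direction it is ruled out by a theorem on linear forms in logarithms of algebraic numbers / $S$-units (\cite{Schl90}) — this is exactly the input supplied by \cite{DaKa10}, \cite{Lev13}. Granting it, compactness of the unit sphere in $\R^d$ yields the displayed inequalities with positive constants, and we take $\tau$ to be their minimum over the finitely many types.

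For the final step, fix $(\n,\k)$ with $\m:=A^\n\k\in\Z^\rho$, let $W$ be the smallest rational $A$-invariant subspace containing $\k$ (the span of its orbit), and write $\k=\sum_{s:V_s\subset W_\C}\k_s$, $\k_s=\pi_s\k$, along the generalized eigenspaces; the restricted action on $W$ is still totally ergodic. Pick $s^*$ with $\log|\chi_{s^*}(\n)|=\max_{s:V_s\subset W_\C}\log|\chi_s(\n)|\ge\tau\|\n\|$. Since $\k$ generates $W$, $\k_{s^*}\ne0$; as $\pi_{s^*}$ is a fixed linear map with algebraic coefficients, a Liouville-type estimate (an integer vector cannot lie within $c\|\k\|^{-(\rho-1)}$ of the fixed proper algebraic subspace $\ker\pi_{s^*}$) gives $\|\k_{s^*}\|\ge c\|\k\|^{-(\rho-1)}$. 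On $V_{s^*}$ one has $A^\n|_{V_{s^*}}=\chi_{s^*}(\n)\,U_\n$ with $U_\n=I+(\text{a nilpotent polynomial in }\n\text{ of degree }<\rho)$, hence $\|U_\n^{-1}\|\le C(1+\|\n\|)^{\rho}$; using that $A^\n$ commutes with the $\pi_s$,
\[
\|\m\|\ \ge\ \frac{\|\pi_{s^*}\m\|}{\|\pi_{s^*}\|}\ =\ \frac{\|A^\n\k_{s^*}\|}{\|\pi_{s^*}\|}\ \ge\ \frac{|\chi_{s^*}(\n)|}{\|\pi_{s^*}\|\,C(1+\|\n\|)^{\rho}}\,\|\k_{s^*}\|\ \ge\ \frac{c'\,e^{\tau\|\n\|}}{(1+\|\n\|)^{\rho}}\,\|\k\|^{-(\rho-1)}.
\]
Absorbing $(1+\|\n\|)^{\rho}$ into the exponential at the price of a smaller $\tau'\in(0,\tau)$ and using $\|\k\|^{-(\rho-1)}\ge\|\k\|^{-\rho}$ gives $\|A^\n\k\|\ge C'e^{\tau'\|\n\|}\|\k\|^{-\rho}$. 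For endomorphisms one first replaces $\Cal S$, $G$, $\Z^\rho$ by the automorphism extension $\tilde{\Cal S}$, $\tilde G$, $\tilde\Z^\rho$ of Lemma~\ref{embedd}, carries out the same argument there, and then restricts the conclusion to the sublattice $\Z^\rho\subset\tilde\Z^\rho$, which is where the hypothesis $A^\n\k\in\Z^\rho$ enters.
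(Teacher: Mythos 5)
The paper does not actually prove this lemma: it is quoted from \cite{DaKa10} (automorphisms) and \cite{Lev13} (endomorphisms), with only a one-sentence indication of the ingredients, namely the Katznelson-type estimate $d(\j,V)\ge C\|\j\|^{-\dim V}$ for $\j\in\Z^\rho\stm0$ and subspaces $V$ with $V\cap\Z^\rho=\{0\}$, together with \cite{BlMo71}. Your reconstruction follows exactly that architecture (Lyapunov functionals $\ell_s(\n)=\log|\chi_s(\n)|$, a Liouville lower bound on the relevant eigencomponent of $\k$, polynomial control of the unipotent part), and you correctly isolate the crux, namely the uniform bound $\max_s\ell_s(\n)\ge\tau\|\n\|$. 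So the skeleton is right and consistent with what the paper points to.

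Three points are nevertheless gaps as written. (i) The crux is left as a black box (``Granting it\dots''), and the tool you invoke for the irrational zero-direction (linear forms in logarithms, Schlickewei's $S$-unit theorem) is not what is needed there; Schlickewei's result enters this paper only through mixing of all orders. On an irreducible rational invariant constituent the $A^\n$ act as units of a number field, the logarithmic embedding of the unit group has discrete image (Dirichlet) and is injective on the image of $\Z^d$ (Kronecker plus total ergodicity), so the $\R$-linear map $x\mapsto(\ell_s(x))_s$ already has trivial kernel; combined with $\sum_s(\dim V_s)\,\ell_s\equiv 0$ and compactness of the sphere this yields $\tau>0$ without any transcendence input. (ii) Your Liouville step ``an integer vector cannot lie within $c\|\k\|^{-(\rho-1)}$ of $\ker\pi_{s^*}$'' requires $\ker\pi_{s^*}\cap\Z^\rho=\{0\}$, which can fail when $W$ (the orbit span of $\k$) is reducible, since $\ker\pi_{s^*}$ then contains a rational invariant subspace; one must first apply the fixed rational projection onto an irreducible constituent of $W$ (changing norms and denominators only by constants) and use Katznelson's lemma there, where every eigencomponent of a nonzero rational vector is automatically nonzero by Galois conjugation. (iii) The endomorphism case is not obtained by ``the same argument in the extension'': $\tilde G$ is a solenoid, its dual is not $\Z^\rho$, and for $\n$ with $|\det(A^\n|_W)|<1$ one may have $\ell_s(\n)<0$ for every $s$, so the mechanism of the proof changes — the hypothesis $A^\n\k\in\Z^\rho$ then forces divisibility, hence largeness, of $\k$, and this is precisely the content of Levine's extension rather than a formal corollary of the automorphism case.
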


The proof of the previous result for automorphisms uses the fact
that if $B$ is in ${\Cal M}^*(\rho, \Z)$ and $V$ a $m$-dimensional
eigenspace of $B$ such that $V \cap \Z^\rho = \{0\}$, then there
exists a constant $C$ such that, for every $\j \in \Z^\rho \,
\stm0$, the distance $d(\j,V)$ of $\j$ to $V$ satisfies $d(\j, V)
\geq C \|\j \|^{-m}$ (cf. \cite{Leo60c}, Katznelson \cite[Lemma
3]{Ka71}) and a result of \cite{BlMo71}. The extension to
endomorphisms was obtained by M. Levine in the recent paper
\cite{Lev13} mentioned in the introduction.

\vskip 3mm {\bf Regularity and Fourier series}

We need some results from the theory of approximation of functions
by trigonometric polynomials.

For $f\in L^2(\mathbb T^d)$, the rectangular Fourier partial sums of
$f$ are denoted by $S_{N_1,\ldots, N_d}(f)$. The {\it integral
modulus of continuity} of $f$ is defined as
$$\omega_2(\delta_1,\cdots,\delta_d, f)=\sup_{|\tau_1|\le
\delta_1,\ldots,|\tau_d|\le \delta_d}
\|f(x_1+\tau_1,\cdots,x_d+\tau_d)- f(x_1,\cdots,x_d)\|_2.$$

Let $J_{N_1,\ldots,N_d}(t_1,\cdots,t_d)=K_{N_1,\ldots,
N_d}^2(t_1,\cdots, t_d) /\|K_{N_1,\ldots,N_d}\|_{L^2(\mathbb
T^d)}^2$ be the $d$-dimensional Jackson's kernel, where
$K_{N_1,\ldots,N_d}$ is the $d$-dimensional Fej\'er kernel.

Clearly, $J_{N_1,\ldots,N_d}(t_1,\cdots,t_d)=J_{N_1}(t_1)\cdots
J_{N_d}(t_d)$. It is known that the 1-dimensional Jackson's kernel
satisfies the following moment relations:
\begin{equation}
\label{jackson-moments} \int_0^{\frac12}t^k\, J_{N}(t) \, dt =
O(N^{-k}), \ \forall N \geq 1, \ k=0,1,2.
\end{equation}

\begin{lem} \label{regl1} There exists a positive constant $C_d$
such that, for every $f \in L^2(\T^d)$, for every $N_1,\ldots,
N_d\ge1$, $\|J_{N_1,\ldots,N_d}* f-f\|_2\le
C_d\,\omega_2(\frac1{N_1}, \cdots,\frac1{N_d}, f)$.
\end{lem}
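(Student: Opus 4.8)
The plan is to deduce this Jackson-type estimate from the product structure $J_{N_1,\ldots,N_d}(t_1,\ldots,t_d)=J_{N_1}(t_1)\cdots J_{N_d}(t_d)$ together with the elementary facts that each $J_{N_j}$ is nonnegative, even, of integral $1$, and satisfies the moment relations (\ref{jackson-moments}); no delicate harmonic analysis is needed, only the behaviour of the integral modulus of continuity under dilation.

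First I would record two elementary properties of $\omega_2$. It is nondecreasing in each of its arguments, since the defining supremum is then taken over a larger set. Secondly, I would establish a one-variable dilation bound: for each coordinate direction $e_j$ and each real $s$ with $|s|\le\frac12$,
\begin{equation*}
\|f(\,\cdot-s e_j)-f\|_2\ \le\ \bigl(1+N_j|s|\bigr)\,\omega_2\bigl(\tfrac1{N_1},\ldots,\tfrac1{N_d},f\bigr).
\end{equation*}
To prove it I would set $m=\max(1,\lceil N_j|s|\rceil)$, so that $1\le m\le 1+N_j|s|$ and $|s/m|\le 1/N_j$, and expand the telescoping sum $f(x-se_j)-f(x)=\sum_{i=0}^{m-1}\bigl(f(x-(i+1)\tfrac{s}{m}e_j)-f(x-i\tfrac{s}{m}e_j)\bigr)$; by translation invariance of Haar measure on $\T^d$ each of the $m$ terms has $L^2$-norm equal to $\|f(\cdot-\tfrac{s}{m}e_j)-f\|_2$, which is $\le\omega_2(\tfrac1{N_1},\ldots,\tfrac1{N_d},f)$ by the monotonicity just noted, since $|s/m|\le 1/N_j$. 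Telescoping once more over the $d$ coordinates, I then obtain, for every $\t=(t_1,\ldots,t_d)$ with $|t_j|\le\frac12$,
\begin{equation*}
\|f(\,\cdot-\t)-f\|_2\ \le\ \sum_{j=1}^d\|f(\,\cdot-t_je_j)-f\|_2\ \le\ \Bigl(d+\sum_{j=1}^d N_j|t_j|\Bigr)\,\omega_2\bigl(\tfrac1{N_1},\ldots,\tfrac1{N_d},f\bigr).
\end{equation*}

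Next, since $J_{N_1,\ldots,N_d}\ge 0$ with $\int_{\T^d}J_{N_1,\ldots,N_d}=1$, I would write $J_{N_1,\ldots,N_d}*f-f=\int_{\T^d}J_{N_1,\ldots,N_d}(\t)\,(f(\cdot-\t)-f)\,d\t$ and apply Minkowski's integral inequality together with the last display, reducing the lemma to the bound
\begin{equation*}
\int_{\T^d}J_{N_1,\ldots,N_d}(\t)\Bigl(d+\sum_{j=1}^d N_j|t_j|\Bigr)\,d\t\ \le\ C_d .
\end{equation*}
Here the product structure and Fubini, together with $\int_{-1/2}^{1/2}J_{N_j}=1$, reduce everything to estimating $\int_{-1/2}^{1/2}J_{N_j}(t)\,N_j|t|\,dt$ for each $j$; by evenness this equals $2N_j\int_0^{1/2}t\,J_{N_j}(t)\,dt$, which is $O(1)$ uniformly in $N_j$ by (\ref{jackson-moments}) with $k=1$. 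Hence the integral is at most $d+d\cdot O(1)$, and that quantity may be taken as $C_d$.

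There is no real obstacle here; the only points requiring some care are the dilation step — one must allow $N_j|s|$ to exceed $1$, which is exactly what the splitting $s=m\cdot(s/m)$ and translation invariance are designed to handle — and keeping track of which moment relation of the Jackson kernel is invoked (only $k=0$ and $k=1$, the case $k=2$ playing no role in this particular estimate).
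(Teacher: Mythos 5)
Your proof is correct and follows essentially the same route as the paper: Minkowski's integral inequality to reduce to $\int J_{N_1,\ldots,N_d}(\t)\,\|f(\cdot-\t)-f\|_2\,d\t$, control of the translation norm via a dilation property of $\omega_2$, and the moment relations \eqref{jackson-moments} for $k=0,1$ applied factor by factor. The only cosmetic difference is that you derive an additive bound $\bigl(d+\sum_j N_j|t_j|\bigr)\omega_2$ by explicit telescoping, whereas the paper invokes the standard subadditivity of $\omega_2$ to get the multiplicative bound $\prod_i(N_i\tau_i+1)\,\omega_2$; both yield the required uniform constant $C_d$.
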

\begin{proof} Since $\omega_2(\delta_1,\cdots,\delta_d, f)$ is
increasing and subadditive with respect to $\delta_i$, we have for
any positive numbers $\lambda_i$:
$\omega_2(\lambda_1\delta_1,\cdots,\lambda_d\delta_d, f)
\le(\lambda_1+1) \cdots (\lambda_d+1)\,
\omega_2(\delta_1,\cdots,\delta_d, f)$.

Using this inequality and \eqref{jackson-moments}, we obtain:
\begin{eqnarray*}
&\|J_{N_1,\ldots,N_d}* f-f\|_2\le \int_{[-\frac12,\frac12[^d}
J_{N_1,\ldots,N_d}(\tau_1,\cdots,\tau_d) \, \|f(.-\tau_1,\cdots, .
-\tau_d)
-f\|_{L^2} \, d\tau_1\cdots d\tau_d\\
&\le 2^d\int_{[0,\frac12[^d} J_{N_1,\ldots,N_d}
(\tau_1,\cdots,\tau_d)\, \omega_2(\tau_1,\cdots,\tau_d,f) \, d\tau_1\cdots d\tau_d\\
&=2^d \int_{[0,\frac12[^d} J_{N_1,\ldots,N_d}(\tau_1,\cdots,\tau_d)
\ \omega_2(\frac{N_1\tau_1}{N_1},\cdots, \frac{N_d\tau_d} {N_d},f)
\, d\tau_1\cdots d\tau_d\\
&\le 2^d\omega_2(\frac{1}{N_1},\cdots,\frac{1}{N_d},f)
\int_{[0,\frac12[^d} (N_1\tau_1+1)\cdots(N_d\tau_d+1) \,
J_{N_1,\ldots,N_d}(\tau_1,\cdots,\tau_d) \, d\tau_1\cdots d\tau_d\\
& =2^d\omega_2(\frac{1}{N_1},\cdots,\frac{1}{N_d},f)\, \prod_{i=1}^d
\int_0^{\frac12}(N_i\tau_i+1)J_{N_i}(\tau_i) \, d\tau_i \le C_d \,
\omega_2(\frac{1}{N_1},\cdots,\frac{1}{N_d},f).
\end{eqnarray*}
\end{proof}

\begin{prop} \label{majApprox} There exists a positive constant $C_{d}$,
such that, for every $f\in L^2(\mathbb T^d)$ and
$N_1,\ldots,N_d\ge1$, we have $\|f-S_{N_1,\ldots, N_d}(f)\|_2 \le
C_{d}\, \omega_2(\frac1{N_1},\cdots, \frac1{N_d}, f)$.
\end{prop}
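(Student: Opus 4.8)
The plan is to reduce to \lemref{regl1} by comparing the rectangular Fourier partial sum with a convolution by the Jackson kernel, using the fact that on $L^2$ the partial sum operator is an orthogonal projection.

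First I would recall that $S_{N_1,\ldots,N_d}$ is the orthogonal projection of $L^2(\T^d)$ onto the finite-dimensional subspace $V_{\underline N}$ spanned by the characters $\chi_{\underline k}$ with $|k_i|\le N_i$ for $i=1,\ldots,d$. In particular its operator norm on $L^2(\T^d)$ equals $1$, and it fixes every trigonometric polynomial belonging to $V_{\underline N}$.

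Next, since the one-dimensional Fej\'er kernel $K_N$ is a trigonometric polynomial of degree $N-1$, the Jackson kernel $J_N=K_N^2/\|K_N\|_{L^2}^2$ has degree $2(N-1)$. Hence, choosing $M_i:=\lceil N_i/2\rceil$ (so that $M_i\ge 1$ and $2(M_i-1)\le N_i$), the function $g:=J_{M_1,\ldots,M_d}*f$ has Fourier support contained in $\{\underline k:\ |k_i|\le N_i\}$, i.e. $g\in V_{\underline N}$, so that $S_{N_1,\ldots,N_d}(g)=g$. Then I would write
$$f-S_{N_1,\ldots,N_d}(f)=(f-g)+S_{N_1,\ldots,N_d}(g)-S_{N_1,\ldots,N_d}(f)=(f-g)-S_{N_1,\ldots,N_d}(f-g),$$
whence, using $\|S_{N_1,\ldots,N_d}\|_{L^2\to L^2}=1$,
$$\|f-S_{N_1,\ldots,N_d}(f)\|_2\le 2\,\|f-g\|_2=2\,\|J_{M_1,\ldots,M_d}*f-f\|_2\le 2\,C_d\,\omega_2\big(\tfrac1{M_1},\cdots,\tfrac1{M_d},f\big),$$
the last inequality being \lemref{regl1}.

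Finally, since $1/M_i\le 2/N_i$, the monotonicity of $\omega_2$ in each argument and the scaling inequality $\omega_2(\lambda_1\delta_1,\cdots,\lambda_d\delta_d,f)\le\prod_{i=1}^d(\lambda_i+1)\,\omega_2(\delta_1,\cdots,\delta_d,f)$ (used already in the proof of \lemref{regl1}, here with $\lambda_i=2$) give
$$\omega_2\big(\tfrac1{M_1},\cdots,\tfrac1{M_d},f\big)\le\omega_2\big(\tfrac2{N_1},\cdots,\tfrac2{N_d},f\big)\le 3^d\,\omega_2\big(\tfrac1{N_1},\cdots,\tfrac1{N_d},f\big),$$
and combining the displayed estimates yields the statement with the constant $2\cdot 3^d\,C_d$ (renamed $C_d$). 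The only point requiring care is the degree bookkeeping for the Jackson kernel: squaring the Fej\'er kernel doubles the degree, which is why $M_i$ must be taken of order $N_i/2$ and why one must check $M_i\ge 1$ so that $J_{M_i}$ is defined; no genuine obstacle arises beyond this.
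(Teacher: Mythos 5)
Your proof is correct and follows essentially the same route as the paper: reduce to Lemma~\ref{regl1} by comparing $S_{N_1,\ldots,N_d}(f)$ with a Jackson-kernel approximation of $f$, using the fact that the rectangular partial sum is the orthogonal projection onto the corresponding space of trigonometric polynomials. The paper invokes the best-approximation property $\|f-S_{N_1,\ldots,N_d}(f)\|_2\le\|f-P\|_2$ directly (which saves your factor of $2$) and silently glosses over the degree-doubling of the Jackson kernel, a point you handle correctly by passing to $M_i=\lceil N_i/2\rceil$ at the cost of the extra factor $3^d$.
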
 \begin{proof}
For every trigonometric polynomial $P$ in $d$ variables of degree at
most $N_1\times\cdots\times N_d$, we have: $\|f-S_{N_1,\ldots,
N_d}(f)\|_2\leq \|f-P\|_2$. The result follows then from Lemma
\ref{regl1}.
\end{proof}

By Proposition \ref{majApprox}, the following condition on the
modulus of continuity:
\begin{eqnarray}
&&\text{ There are } \alpha > 1 \text{ and } C(f)<+\infty \text{
such that } \omega_{2}(\delta, ..., \delta, f) \leq C(f) \, (\ln {1
\over \delta})^{-\alpha}, \forall \delta > 0. \label{5.11}
\end{eqnarray}
implies:
\begin{eqnarray}
&&\|f - s_{N,...,N}(f)\|_2 \leq R(f) \,  (\ln N)^{-\alpha}, \text{
with } \alpha > 1. \label{ineq5.23b2}
\end{eqnarray}
One easily checks that (\ref{regFour2b2}) implies
(\ref{ineq5.23b2}).

In what follows in this subsection, $\n \to A^\n$ is a totally
ergodic $\Z^d$-action by {\it endomorphisms} on $\T^\rho$. Recall
that $\tilde f$ denotes the extension to $\tilde G$ of a function
$f$ on $G$ (here $G = \T^\rho$) and that we use the convention (*)
(i.e., we put $c_{A^\n \k}(f) =0$ if $A^\n\k \not \in \Z^\rho$). We
denote simply by $| . |$ the norm of an integral vector. Recall that
we do not write the transposition for the dual action of $A^{\n}$.
The proof of the following proposition is like that of the analogous
result in \cite{Leo60c}.

\begin{prop} \label{ThmRatedeCor} Let $f \in L_0^2(\T^\rho)$ satisfying
(\ref{ineq5.23b2}) and $f_1(x) := \sum_{\n \in \Cal{N}_1} c_\n (f)
e^{2\pi i\langle \n , x \rangle}$, where $\Cal{N}_1$ is a subset of
$\Z^\rho$. Then there is a finite constant $B(f)$ depending only on
$R(f)$ such that
\begin{eqnarray}
|\langle A^{\n} f_1, f_1\rangle| \leq  B(f) \|f_1\|_2
\|\n\|^{-\alpha}, \ \forall \n \not = \0. \label{ineq5.24}
\end{eqnarray}
\end{prop}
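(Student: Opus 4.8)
The plan is to expand $\langle A^{\n}f_{1},f_{1}\rangle$ on the Fourier side and to split the frequency sum at a threshold $N=N(\n)$ growing with $\|\n\|$, controlling the low‑frequency part by the $S$‑unit gap inequality of Lemma \ref{DaKa} and the high‑frequency part by the regularity hypothesis (\ref{ineq5.23b2}). A preliminary remark is that $f_{1}$ itself satisfies (\ref{ineq5.23b2}) with the same constant $R(f)$, since $\|f_{1}-S_{N,\dots,N}(f_{1})\|_{2}\le\|f-S_{N,\dots,N}(f)\|_{2}$; so it is enough to prove the estimate for an arbitrary $g\in L_{0}^{2}(\T^{\rho})$ obeying (\ref{ineq5.23b2}) and then specialize to $g=f_{1}$. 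In particular, up to a fixed constant depending only on $\rho$ (equivalence of norms on $\Z^{\rho}$), one has the tail bound $\sum_{\|\k\|>M}|c_{g}(\k)|^{2}\le R(f)^{2}(\ln M)^{-2\alpha}$ for all large $M$.

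Working in the extension $\tilde G$ so that $A^{\n}$ is defined for every $\n\in\Z^{d}$, and using the convention (*), formula (\ref{2decorr1}) gives $|\langle A^{\n}g,g\rangle|\le\sum_{\k\in\Z^{\rho}}|c_{g}(A^{\n}\k)|\,|c_{g}(\k)|$, where $c_{g}(A^{\n}\k)$ is read as $0$ when $A^{\n}\k\notin\Z^{\rho}$. By total ergodicity $\k\mapsto A^{\n}\k$ is injective on $\Z^{\rho}$, hence $\sum_{\k}|c_{g}(A^{\n}\k)|^{2}\le\|g\|_{2}^{2}$. Fix $\n\ne\0$ and an integer $N\ge2$, and split the sum at $\|\k\|=N$. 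For $0<\|\k\|\le N$ with $A^{\n}\k\in\Z^{\rho}$, Lemma \ref{DaKa} yields $\|A^{\n}\k\|\ge C\,e^{\tau\|\n\|}\|\k\|^{-\rho}\ge C\,e^{\tau\|\n\|}N^{-\rho}=:r_{N}$ (with $C,\tau$ the constants of Lemma \ref{DaKa}; the term vanishes when $A^{\n}\k\notin\Z^{\rho}$, and $c_{g}(\0)=0$), so by Cauchy--Schwarz and injectivity this part is at most $\|g\|_{2}\big(\sum_{\|m\|>r_{N}}|c_{g}(m)|^{2}\big)^{1/2}$, which by the tail bound is $\le C'\,R(f)\,\|g\|_{2}\,(\ln r_{N})^{-\alpha}$ for a constant $C'$ depending only on $\rho,\alpha$. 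For $\|\k\|>N$, Cauchy--Schwarz the other way bounds the part by $\big(\sum_{\|\k\|>N}|c_{g}(\k)|^{2}\big)^{1/2}\big(\sum_{\k}|c_{g}(A^{\n}\k)|^{2}\big)^{1/2}\le C'\,R(f)\,(\ln N)^{-\alpha}\,\|g\|_{2}$. Altogether $|\langle A^{\n}g,g\rangle|\le C'\,R(f)\,\|g\|_{2}\big[(\ln r_{N})^{-\alpha}+(\ln N)^{-\alpha}\big]$.

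It then remains to choose $N=N(\n)$ so that $\ln N$ and $\ln r_{N}$ are both of order $\|\n\|$. Taking $N$ to be the least integer $\ge e^{\tau\|\n\|/(2\rho)}$ makes $\ln N$ of size $\tfrac{\tau}{2\rho}\|\n\|$ and $\ln r_{N}=\ln C+\tau\|\n\|-\rho\ln N$ of size $\tfrac{\tau}{2}\|\n\|$, so for $\|\n\|$ beyond a fixed bound (depending only on $C,\tau,\rho$) both error terms are at most a constant times $\|\n\|^{-\alpha}$; specializing $g=f_{1}$ gives the asserted inequality with $B(f)$ depending only on $R(f)$ and the fixed data $\tau,\rho,\alpha$, the finitely many remaining $\n$ being handled by the trivial bound $|\langle A^{\n}f_{1},f_{1}\rangle|\le\|f_{1}\|_{2}^{2}$ (enlarging the constant if necessary). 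The one genuinely delicate point is this matching of scales: the exponential gap $e^{\tau\|\n\|}$ supplied by Lemma \ref{DaKa} must dominate the polynomial loss $N^{-\rho}$, which forces $N$ to grow only exponentially slowly in $\|\n\|$ and thereby keeps $\ln N$ --- the quantity governing the high‑frequency tail via (\ref{ineq5.23b2}) --- of the right order $\|\n\|$; everything else is routine Cauchy--Schwarz bookkeeping together with the convention (*).
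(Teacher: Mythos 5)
Your proof is correct and follows essentially the same route as the paper's: reduce to a function satisfying (\ref{ineq5.23b2}) with the same constant, split the Fourier sum at a threshold growing like $e^{c\|\n\|}$ with $c<\tau/\rho$, bound the low frequencies via Lemma \ref{DaKa} and the Parseval tail estimate, and the high frequencies by Cauchy--Schwarz the other way. Your explicit choice $N\approx e^{\tau\|\n\|/(2\rho)}$ is exactly the paper's choice of a fixed base $b\in(1,\lambda^{1/\rho})$ with threshold $b^{|\n|}$, so the two arguments coincide.
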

\begin{proof} It suffices to prove the result for $f$, since,
by setting $c_f(n)=0$ outside $N_1$, we obtain (\ref{ineq5.24}) with
the same constant $B(f)$ as shown by the proof. Let $\lambda, b, d$
such that $1 < \lambda < e^{\tau}, \ 1 < b < \lambda^{{1\over
\rho}}, \ \lambda b^{-\rho} = d
> 1$. We have for $\n \in \Z^d$:
\begin{eqnarray}
&&\langle A^{\n} f, f\rangle = \sum_{\k \, \in \Z^\rho} c_\k(f) \,
\overline c_{A^\n\k}(f) = \sum_{|\k| < b^{|\n|}} \ + \ \sum_{|\k|
\geq b^{|\n|}}. \label{ineq5.25}
\end{eqnarray}

From  Inequality (\ref{DaKaMin}) of Lemma \ref{DaKa}, we deduce
that, if $|\k| < b^{|\n|}$, then $|A^\n\k| \geq D \lambda^{\n} \,
|\k|^{-\rho} \geq D \lambda^{|\n|} \, b^{-\rho |\n|} = D d^{|\n|}, \
\n \not = \0$. It follows, for the first sum:
\begin{eqnarray*}
|\sum_{|\k| < b^{|\n|}} c_\k(f) \ \overline c_{A^\n\k}(f)| \leq
(\sum_{|\k| < b^{|\n|}} |c_\k(f)|^2)^\frac12 \ (\sum_{|\k| <
b^{|\n|}} |c_{A^\n\k}(f)|^2)^\frac12 \leq  \|f\|_2 \,\sum_{|\m| >
Dd^{|\n|}} |c_{\m}(f)|^2.
\end{eqnarray*}

By Parseval inequality and (\ref{ineq5.23b2}), there is a finite
constant $B_1(f)$ such that, for ${|\n|} \not = 0$:
\begin{eqnarray}
(\sum_{|\m| > Dd^{|\n|}} |c_{\m}(f)|^2)^\frac12 &&\leq \|f-
s_{[Dd^{|\n|}], ..., [Dd^{|\n|}]}\|_2 \leq {R(f) \over (\ln
[Dd^{|\n|}])^\alpha} \leq B_1(f) |\n|^{-\alpha}. \label{ineq5.28}
\end{eqnarray}

From the previous inequalities, it follows:
\begin{eqnarray}
|\sum_{|\k| < b^{|\n|}} c_n(f) \, \overline c_{A^\n\k}(f)| \leq
B_1(f) \|f\|_2 {|\n|}^{-\alpha}, \forall {|\n|} \not = 0.
\label{ineq5.29}
\end{eqnarray}

Analogously, for the second sum in (\ref{ineq5.25}) we obtain
\begin{eqnarray} |\sum_{|\k| \geq b^{|\n|}} c_\k(f) \,
\overline c_{A^\n\k}(f)| \leq  B_2(f) \|f\|_2 {|\n|}^{-\alpha}, \,
|\n| \not = 0. \label{ineq5.30}
\end{eqnarray}

Taking $B(f)= B_1(f) + B_2(f)$, (\ref{ineq5.24}) follows from
(\ref{ineq5.25}), (\ref{ineq5.29}), (\ref{ineq5.30}).
\end{proof}

\vskip 3mm The following theorem has the same conclusion as Theorem
\ref{condAbsConv}, but requires a weaker regularity condition.
\begin{thm} \label{ContabsConv} If $f$ satisfies (\ref{ineq5.23b2}),
(in particular if $f$ satisfies the regularity condition
(\ref{5.11})), then $\sum_{\n \in \Z^d} |\langle A^{\n}f, f\rangle|
< \infty$, the variance $\sigma^2(f)$ exists, $\sigma^2(f) =
\sum_{\n \in \Z^d} \langle A^{\n}f, f\rangle$, the density
$\varphi_f$ of the spectral measure of $f$ is continuous.

Moreover, there is a constant $C$ such that, if $\Cal{N}$ is any
subset of $\Z^\rho$ and $f_1(x) = \sum_{\k \in \Cal{N}} c_\k(f)
e^{2\pi i\langle \k , x \rangle}$, then $\sigma(f - f_1) \leq C
\|f-f_1\|_2$.
\end{thm}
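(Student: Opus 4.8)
The statement is the $L^{2}$-regularity analogue of Theorem~\ref{condAbsConv}, and the plan is to run the proof of that theorem with the polynomial decorrelation of Proposition~\ref{ThmRatedeCor} playing the part that absolute convergence of the Fourier series of $f$ played there. Total ergodicity supplies the Lebesgue spectrum property (Lemma~\ref{LebSpecEquiv}), so the spectral calculus of Section~\ref{specAnaly} is available throughout. First I would apply Proposition~\ref{ThmRatedeCor} with $\Cal{N}_{1}=\Z^{\rho}$, that is, with $f_{1}=f$ itself (legitimate, as $f$ trivially satisfies~(\ref{ineq5.23b2})); it furnishes a finite constant $B(f)$, depending only on $R(f)$ and on the action, such that
$$|\langle A^{\n}f,f\rangle|\ \le\ B(f)\,\|f\|_{2}\,\|\n\|^{-\alpha},\qquad\forall\,\n\neq\0 .$$
Summing this power decay over $\Z^{d}\setminus\{\0\}$ then gives
$$\sum_{\n\in\Z^{d}}|\langle A^{\n}f,f\rangle|\ \le\ \|f\|_{2}^{2}\ +\ B(f)\,\|f\|_{2}\sum_{\n\neq\0}\|\n\|^{-\alpha}\ <\ \infty,$$
which is the first assertion.

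The remaining statements then come out as essentially formal consequences of the spectral theorem. Since the action has a Lebesgue spectrum, the spectral measure $\nu_{f}$ of $f$ on $\T^{d}$ is absolutely continuous with density $\varphi_{f}$ and $\widehat{\nu_{f}}(\n)=\langle A^{\n}f,f\rangle$; an $L^{1}$ function with absolutely summable Fourier coefficients has a continuous representative, so $\varphi_{f}(t)=\sum_{\n\in\Z^{d}}\langle A^{\n}f,f\rangle\,e^{2\pi i\langle\n,t\rangle}$ is continuous. For any F\o{}lner sequence $(D_{n})$ the associated kernel $\tilde R_{n}$ tends weakly to the Dirac mass at $0$, so by~(\ref{ConvVarRotergSumDn}), $|D_{n}|^{-1}\|\sum_{\el\in D_{n}}A^{\el}f\|_{2}^{2}=(\tilde R_{n}*\varphi_{f})(0)\to\varphi_{f}(0)$; hence the variance $\sigma^{2}(f)$ exists, equals $\varphi_{f}(0)$, and — evaluating the absolutely convergent Fourier series at $0$ — equals $\sum_{\n\in\Z^{d}}\langle A^{\n}f,f\rangle$. (One can instead avoid continuity of $\varphi_{f}$ here and argue directly that $|D_{n}|^{-1}\|\sum_{\el\in D_{n}}A^{\el}f\|_{2}^{2}=\sum_{\tp}\tfrac{|D_{n}\cap(D_{n}+\tp)|}{|D_{n}|}\,\langle A^{\tp}f,f\rangle\to\sum_{\tp}\langle A^{\tp}f,f\rangle$ by dominated convergence, using the F\o{}lner condition~(\ref{FolnCond}) and the majorant $|\langle A^{\tp}f,f\rangle|$.)

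For the ``moreover'' part I would observe that $f-f_{1}$ has Fourier coefficients $c_{f}(\k)\,1_{\k\notin\Cal{N}}$, whence $\|(f-f_{1})-s_{N,\dots,N}(f-f_{1})\|_{2}\le\|f-s_{N,\dots,N}(f)\|_{2}$ for every $N$; thus $f-f_{1}$ again satisfies~(\ref{ineq5.23b2}) with the \emph{same} constant $R(f)$, so Proposition~\ref{ThmRatedeCor} applies to $g:=f-f_{1}$ with the same $B(f)$. Then $\sigma^{2}(f-f_{1})=\varphi_{g}(0)\le\|\varphi_{g}\|_{\infty}\le\sum_{\n}|\langle A^{\n}g,g\rangle|$, and summing the decorrelation bound for $g$ as above (with $\|g\|_{2}\le\|f\|_{2}$) bounds this by $\|f-f_{1}\|_{2}$ times a constant $C$ that depends only on $R(f)$ and $\|f\|_{2}$ and — the essential feature — is independent of $\Cal{N}$; this yields $\sigma(f-f_{1})\le C\,\|f-f_{1}\|_{2}$, and in particular $\sigma(f-f_{1})\to0$ as $\Cal{N}$ exhausts $\Z^{\rho}$. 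Since Proposition~\ref{ThmRatedeCor} already carries the analytic weight, I expect the only step needing real care to be this passage from a single per-lag inequality to summability over $\Z^{d}$ with constants governed only by $R(f)$: it is the one point where hypothesis~(\ref{ineq5.23b2}) is genuinely used rather than just $L^{2}$-membership, and it is where the logarithmic decay of $\|f-s_{N,\dots,N}(f)\|_{2}$, set against the exponential growth of frequencies along $\tilde{\Cal{S}}$-orbits underlying Lemma~\ref{DaKa}, has to prevail; with that in hand, continuity of $\varphi_{f}$, existence of the variance, the series identity and the $\Cal{N}$-uniform bound are then only bookkeeping.
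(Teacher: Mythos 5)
Your proposal is correct and follows exactly the paper's route: the paper's own proof consists precisely of invoking Proposition~\ref{ThmRatedeCor} to get $\sum_{\n}|\langle A^{\n}f,f\rangle|<\infty$ (and hence continuity of $\varphi_f$, whose Fourier coefficients are the correlations), and of applying the same proposition to $f-f_1$, with the constant depending only on $R(f)$, for the second statement. Your write-up merely makes explicit the summation of the $\|\n\|^{-\alpha}$ bound and the Fej\'er/F\o{}lner identification of the variance with $\varphi_f(0)$, which is exactly what the paper leaves implicit.
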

\begin{proof} The Fourier coefficients of $\varphi_f$ are $\langle A^\n f, f
\rangle$. The previous proposition implies [(\ref{ineq5.23b2}) $\
\Rightarrow \ \sum_{\n \in \Z^d} |\langle A^\n f, f\rangle| <
+\infty$] and the second statement.
\end{proof}

\vskip 3mm {\bf Coboundary characterization}

Using the previous result on the decay of correlation, let us give a
sufficient condition on the Fourier series of $f$ for the coboundary
characterization. Recall that $J$ denotes a section of the
$\Z^d$-action by automorphisms on $\Z^\rho$ (cf. Remark
\ref{sectJ}).
\begin{rem} \label{sectJ}
 For $G = \T^\rho$, each character is identified to an
element $\k$ of $\Z^\rho$. It is useful to choose the section in the
following way. For a fixed $\el$, the set $\{A^\k \el, \k \in
\Z^d\}$ is discrete and $\lim_{\|\k\| \to \infty} \|A^\k \el\| =
+\infty$. Therefore the minimum of the norm is achieved for some
value of $\k$. We can choose an element $\j$ in each class modulo
the action of $\tilde {\Cal S}$ on $\Z^\rho$, which achieves the
minimum of the norm. By this choice, we have
\begin{eqnarray}
\|\j\| \leq \|A^\k \j\|, \, \forall \j \in J, \, \k \in \Z^d.
\label{majJK0}
\end{eqnarray}
\end{rem}
The sufficient condition (\ref{condConv2}) given in Lemma
\ref{cobCond24} for the coboundary representation reads in the
algebraic framework
\begin{eqnarray}
\sum_{j \in J} \sum_{\k \in \Z^d} (1 +\|\k\|^d) \, |c_f(A^\k j)| <
\infty. \label{condConvAuto}
\end{eqnarray}
\begin{thm} \label{coboundChar} If $|c_f(\k)| = O(\|k\|^{-\beta}),
\text{ with } \beta > \rho$, we have $\sigma^2(f) =0$ if and only if
$f$ is a mixed coboundary: there are continuous functions $u_i$, $i=
1, ..., d$ such that
\begin{eqnarray}
f= \sum_{i=1}^d (I- A_i) u_i. \label{cobound12}
\end{eqnarray}
\end{thm}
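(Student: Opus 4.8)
The plan is to derive Theorem~\ref{coboundChar} from Lemma~\ref{cobCond24}: the decay hypothesis $|c_f(\k)|=O(\|\k\|^{-\beta})$ with $\beta>\rho$ is strong enough to force the summability condition \eqref{condConvAuto} (which is exactly \eqref{condConv2} written in the algebraic framework), and once that condition holds Lemma~\ref{cobCond24} does the rest. Indeed, under \eqref{condConv2} that lemma provides a decomposition $f=v+\sum_{t=1}^d(I-A_t)u_t$ with $\{A^\n v:\n\in\Z^d\}$ orthogonal, states that $\sigma^2(f)=0$ if and only if $f$ is a mixed coboundary, and — since for $G=\T^\rho$ the vectors $\psi_j$ of the general theory are the characters $\chi_j$, which together with all their translates $A^\n\chi_j$ are continuous and uniformly bounded by $1$ — guarantees that the $u_t$ may be taken continuous. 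This covers both implications of Theorem~\ref{coboundChar} at once: the ``only if'' part is the nontrivial half of Lemma~\ref{cobCond24} (with the continuity of the $u_t$ coming from its last assertion), and the ``if'' part holds because a function $f$ satisfying \eqref{regFour2b2} which happens to be a mixed coboundary in the sense of \eqref{cobound12} is a fortiori a mixed coboundary in the sense of Lemma~\ref{cobCond24}, whence $\sigma^2(f)=0$ by the same equivalence.

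So the real work is the implication \eqref{regFour2b2} $\Rightarrow$ \eqref{condConvAuto}. First I would fix the section $J$ as in Remark~\ref{sectJ}, so that \eqref{majJK0} holds, i.e. $\|j\|\le\|A^\k j\|$ for all $j\in J$, $\k\in\Z^d$. Writing each $\chi\in\Z^\rho\stm0$ uniquely as $\chi=A^{\k(\chi)}j(\chi)$ with $j(\chi)\in J$, $\k(\chi)\in\Z^d$, and recalling that by the convention $(*)$ the characters of $\tilde G$ not lying in $\Z^\rho$ contribute $0$, we have
\[
\sum_{j\in J}\sum_{\k\in\Z^d}(1+\|\k\|^d)\,|c_f(A^\k j)|=\sum_{\chi\in\Z^\rho\stm0}\bigl(1+\|\k(\chi)\|^d\bigr)\,|c_f(\chi)|,
\]
so everything reduces to controlling $\|\k(\chi)\|$ by $\|\chi\|$. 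The key estimate is furnished by the decorrelation bound of Lemma~\ref{DaKa}: applying \eqref{DaKaMin} with $\n=\k(\chi)$ and the lattice vector $j(\chi)$ (for which $A^{\k(\chi)}j(\chi)=\chi\in\Z^\rho$) gives $\|\chi\|\ge C\,e^{\tau\|\k(\chi)\|}\,\|j(\chi)\|^{-\rho}$; since $\|j(\chi)\|\le\|\chi\|$ by \eqref{majJK0}, hence $\|j(\chi)\|^{-\rho}\ge\|\chi\|^{-\rho}$, we obtain $e^{\tau\|\k(\chi)\|}\le C^{-1}\|\chi\|^{\rho+1}$, that is $\|\k(\chi)\|\le\tau^{-1}\bigl((\rho+1)\log\|\chi\|-\log C\bigr)=O(\log\|\chi\|)$. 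Combining this with $|c_f(\chi)|=O(\|\chi\|^{-\beta})$,
\[
\sum_{\chi\in\Z^\rho\stm0}\bigl(1+\|\k(\chi)\|^d\bigr)\,|c_f(\chi)|\le C'\sum_{\chi\in\Z^\rho\stm0}\bigl(1+(\log\|\chi\|)^d\bigr)\,\|\chi\|^{-\beta}<\infty,
\]
the series converging precisely because $\beta>\rho$, a logarithmic factor being harmless for $\sum_{\chi\in\Z^\rho}\|\chi\|^{-\beta}$ when $\beta>\rho$. This establishes \eqref{condConvAuto}, and the theorem follows.

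The step I expect to require the most care is the bookkeeping around the extension $\tilde G$ built in Lemma~\ref{embedd}: Lemma~\ref{DaKa} is phrased for automorphism actions on $\T^\rho$ under the proviso $A^\n\k\in\Z^\rho$, so one must check that it legitimately applies to the pair $(\k(\chi),j(\chi))$ even though $j(\chi)$ a priori lives in the dual $\tilde\Z^\rho$ of $\tilde G$ rather than in $\Z^\rho$. It is precisely the choice of $J$ compatible with \eqref{majJK0} and with the convention $(*)$ — together with the fact that $\beta>\rho$ already makes the $\k=\0$ part $\sum_{j\in J}|c_f(j)|\le\sum_{\chi\in\Z^\rho\stm0}\|\chi\|^{-\beta}$ finite — that reconciles the two settings. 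Everything else (the orthogonality of $\{A^\n v\}$, the equivalence between $v=0$ and the vanishing of the variance, and the continuity of the $u_t$ in the topological framework) is imported verbatim from Lemma~\ref{cobCond24}.
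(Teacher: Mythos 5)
Your proposal is correct and follows the paper's overall strategy: reduce the theorem to the summability condition (\ref{condConvAuto}) and then invoke Lemma \ref{cobCond24}, with the continuity of the $u_i$ coming from its last assertion since the $\psi_j$ here are characters, uniformly bounded by $1$. The one place where you genuinely diverge is in how (\ref{condConvAuto}) is verified. The paper splits $|c_f(A^{\k}\j)|=|c_f(A^{\k}\j)|^{\delta}\,|c_f(A^{\k}\j)|^{1-\delta}$ for a tuned exponent $\delta$, uses (\ref{DaKaMin}) to turn the $\delta$-factor into $e^{-\delta\beta\tau\|\k\|}\,\|\j\|^{\delta\beta\rho}$ (the exponential absorbing the polynomial weight $\|\k\|^{d}$), converts $\|\j\|$ into $\|A^{\k}\j\|$ via (\ref{majJK0}), and ends with $\sum_{\el}\|\el\|^{-(\rho+\varepsilon)}<\infty$. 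You instead combine (\ref{DaKaMin}) and (\ref{majJK0}) to get $e^{\tau\|\k(\chi)\|}\le C^{-1}\|\chi\|^{\rho+1}$, hence $\|\k(\chi)\|=O(1+\log\|\chi\|)$, which reduces the whole sum to $\sum_{\chi}(1+\log\|\chi\|)^{d}\,\|\chi\|^{-\beta}<\infty$. This is sound, rests on exactly the same two inputs as the paper's interpolation, and is arguably more transparent since it isolates the real point: the exponential growth in (\ref{DaKaMin}) forces the orbit coordinate $\k(\chi)$ to be logarithmic in $\|\chi\|$, so the weight $\|\k\|^{d}$ costs only a logarithm. The caveat you raise at the end --- that (\ref{DaKaMin}) is applied with $\j\in J\subset\tilde\Z^{\rho}$ rather than with an integer vector --- is equally present in the paper's own proof, which applies the same inequality to the same pairs, so it is not a gap relative to the published argument.
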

\begin{proof} Let $\varepsilon \in \, ]0, \beta - \rho[$
and $\delta := (\beta - \rho - \varepsilon) / (\beta (1 + \rho))$,
we have $\delta \beta\rho -\beta(1-\delta)  = - (\rho +
\varepsilon)$.

There is a constant $C_1$ such that $\|\k\|^d \, e^{-\delta \beta
\tau \|\k\|} \leq C_1, \, \forall \k \in \Z^d$.

According to (\ref{DaKaMin}), we have $|c_f(A^\k \j)| \leq C \|A^\k
\j\|^\beta  \leq C e^{-\beta \tau \|\k\|} \, \|\j\|^{\beta \rho}$;
hence
\begin{eqnarray}
e^{\delta \beta \tau \|\k\|} \, |c_f(A^\k j)|^\delta \leq C
\|\j\|^{\delta \beta\rho}. \label{majkj1}
\end{eqnarray}
Recall that, for every $\el \in \Z^\rho \stm0$, there is a unique
pair $(k, j) \in \Z^d \times J$ such that $A^\k \j = \el$. Therefore
we have, using Inequality (\ref{majJK0}) (see Remark \ref{sectJ}):
\begin{eqnarray*}
&&\sum_{\j \in J}\sum_{\k \in \Z^d} \|\k\|^d \, |c_f(A^\k \j)| =
\sum_{\j \in J} \sum_{\k \in \Z^d} \|\k\|^d \, |c_f(A^\k \j)|^\delta
|c_f(A^\k \j)|^{1-\delta} \\
&& \ \ \leq C_1 \sum_{\j \in J}\sum_{\k \in \Z^d}  e^{\delta \beta
\tau \|\k\|} \, |c_f(A^\k \j)|^\delta \, |c_f(A^\k \j)|^{1-\delta} \
\leq C_2 \sum_{\j \in J}\sum_{\k \in \Z^d} \|\j\|^{\delta
\beta\rho} \, |c_f(A^\k \j)|^{1-\delta} \\
&&  \  \leq C_2 \sum_{\j \in J}\sum_{\k \in \Z^d} \|A^\k
\j\|^{\delta \beta\rho} \, |c_f(A^\k \j)|^{1-\delta} \text{ by
(\ref{majkj1})
and (\ref{majJK0})}\\
&& \ \ = C_2 \sum_{\el \in \Z^\rho \stm0} \|\el\|^{\delta \beta\rho}
\, |c_f(\el)|^{1-\delta} \leq C_3 \sum_{\el \in \Z^\rho \stm0}
\|\el\|^{\delta \beta\rho -\beta(1-\delta)} \leq C_3 \sum_{\el \in
\Z^\rho \stm0} \|\el\|^{-(\rho + \varepsilon)}< +\infty.
\end{eqnarray*}
This implies that (\ref{condConvAuto}) is satisfied. Since here the
functions involved in the proof of Lemmas \ref{cobRepr} and
\ref{cobCond24} are characters, hence continuous and uniformly
bounded, we have continuity of the functions $u_i$ in the
representation (\ref{cobound10A}). \end{proof}

Now we consider the CLT when $G$ is the torus $\T^d$. As mentioned
in the introduction, an analogous result for $d$-dimensional
rectangles and a class of regular functions was recently obtained by
M. Levine (\cite{Lev13}).

\begin{thm} \label{tclRegFoln} Let $\n \to A^\n$ be a totally ergodic
$\N^d$-action by commuting matrices on $\T^\rho$. Let $(R_n)_{n \geq
1}$ be a F\o{}lner summation sequence.

1) If $f$ satisfies (\ref{ineq5.23b2}), in particular if $f$
satisfies the regularity condition (\ref{5.11}), we have
$\sigma^2(f) = \varphi_f(0)$ and
$$ (\sum_{\el \in \N^d} R_n(\el)^2)^{-\frac12} \, \sum_{\el \in \N^d} R_n(\el) \, f(A^\el \, .)
 \overset{distr} {\underset{n \to \infty} \longrightarrow } \Cal N(0,\sigma^2(f)).$$

2) If $f$ satisfies (\ref{regFour2b2}) (i.e., $|c_f(\k)| =
O(\|k\|)^{-\beta}, \text{ with } \beta > \rho$), then $\sigma^2(f) =
0$ if and only if\footnote{This gives a test of non degeneracy of
the limiting law in terms of periodic points.} there are continuous
functions $u_t$ on $\T^\rho$,
 for $t= 1, ..., d$, such that $f= \sum_{t=1}^d (I- A_t) u_t$.
\end{thm}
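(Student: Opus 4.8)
The two parts call for very different amounts of effort. \emph{Part~2 needs no new argument.} Condition (\ref{regFour2b2}) on $f$ is precisely the hypothesis of Theorem~\ref{coboundChar}, whose conclusion is exactly that $\sigma^2(f)=0$ if and only if $f=\sum_{t=1}^d(I-A_t)u_t$ with $u_t$ continuous on $\T^\rho$; the passage to the connected automorphism extension $\tilde G$ of Lemma~\ref{embedd} (needed since the $A_t$ are endomorphisms) is already incorporated there. Moreover (\ref{regFour2b2}) implies (\ref{ineq5.23b2}), so Part~1 applies and gives $\sigma^2(f)=\varphi_f(0)$ for the given F\o{}lner sequence, so the variance figuring in Part~2 is the same quantity treated in Theorem~\ref{coboundChar}; Part~2 follows at once. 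For Part~1 the plan is to copy, essentially verbatim, the proof of Theorem~\ref{tclPol0A}, with the single substitution that the r\^ole played there by Theorem~\ref{condAbsConv} (valid for $f\in AC_0(G)$) is now played by Theorem~\ref{ContabsConv} (valid under (\ref{ineq5.23b2}), hence also under (\ref{5.11})); since $\T^\rho$ is connected, the totally ergodic $\N^d$-action is mixing of all orders by Corollary~\ref{r-mixEndo} (via Theorem~\ref{r-mixing}), which is what makes the method of moments available.

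Concretely, from Theorem~\ref{ContabsConv} I would record that $\varphi_f$ is continuous, that $\sigma^2(f)$ exists, and that there is a constant $C$ with $\sigma(g-g_1)\leq C\|g-g_1\|_2$ whenever $g$ satisfies (\ref{ineq5.23b2}) and $g_1$ is the restriction of its Fourier series to a subset of $\Z^\rho$. By (\ref{ConvVarRotergSumDn}) and the F\o{}lner property, $(\sum_\el R_n(\el)^2)^{-1}\|\sum_\el R_n(\el)\,A^\el g\|_2^2=(\tilde R_n*\varphi_g)(0)\to\varphi_g(0)$ for every $g$ with continuous spectral density, so in particular $\sigma^2(f)=\varphi_f(0)$; if this is $0$ then $\|Z_n\|_2\to0$ and $Z_n\to\delta_0$, and from now on I assume $\sigma^2(f)>0$.

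Next I would take an increasing exhaustion $(\Cal N_s)$ of $\Z^\rho\stm0$ by finite sets and set $f_s(x):=\sum_{\k\in\Cal N_s}c_f(\k)\,e^{2\pi i\langle\k,x\rangle}$, a trigonometric polynomial with $\|f-f_s\|_2\to0$. A short check shows $f-f_s$ still satisfies (\ref{ineq5.23b2}) with a constant independent of $s$: for $N$ large one has $s_{N,...,N}(f-f_s)=s_{N,...,N}(f)-f_s$, hence $\|f-f_s-s_{N,...,N}(f-f_s)\|_2=\|f-s_{N,...,N}(f)\|_2$, and the finitely many small values of $N$ are absorbed into the constant. Thus $\varphi_{f-f_s}$ is continuous and $\sigma^2(f-f_s)=\varphi_{f-f_s}(0)$ exists. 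Since $\sigma^2(f_s)\to\sigma^2(f)>0$, for $s$ large $\sigma^2(f_s)>0$ and $\|\sum_\el R_n(\el)\,A^\el f_s\|_2\to\infty$; by mixing of all orders and Proposition~\ref{tclPol0}, the sums $\sum_\el R_n(\el)A^\el f_s$ normalized by their own $L^2$ norm converge in distribution to $\Cal N(0,1)$, and since $\|\sum_\el R_n(\el)A^\el f_s\|_2/(\sum_\el R_n(\el)^2)^{1/2}\to\sigma(f_s)$, we get with $Z_n^s:=(\sum_\el R_n(\el)^2)^{-\frac12}\sum_\el R_n(\el)f_s(A^\el\,.)$ that $Z_n^s\overset{distr}{\underset{n\to\infty}\longrightarrow}\Cal N(0,\sigma^2(f_s))$.

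Finally, with $Z_n:=(\sum_\el R_n(\el)^2)^{-\frac12}\sum_\el R_n(\el)f(A^\el\,.)$, I would estimate
$$\limsup_n\|Z_n^s-Z_n\|_2^2=\lim_n(\tilde R_n*\varphi_{f-f_s})(0)=\sigma^2(f-f_s)\leq C^2\|f-f_s\|_2^2,$$
which tends to $0$ as $s\to\infty$; hence $\lim_s\limsup_n\mu[|Z_n^s-Z_n|>\varepsilon]=0$ for every $\varepsilon>0$, and combined with $Z_n^s\to\Cal N(0,\sigma^2(f_s))$ and $\sigma^2(f_s)\to\sigma^2(f)$, Theorem~3.2 in \cite{Bill99} yields $Z_n\overset{distr}{\underset{n\to\infty}\longrightarrow}\Cal N(0,\sigma^2(f))$. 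The only step deserving genuine (if minor) attention is the claim that $f-f_s$ inherits condition (\ref{ineq5.23b2}) with a constant independent of $s$, which is what lets Theorem~\ref{ContabsConv} supply both the continuity of $\varphi_{f-f_s}$ and the uniform bound $\sigma(f-f_s)\leq C\|f-f_s\|_2$; the real content of the statement lies entirely in the previously established results (Theorems~\ref{ContabsConv}, \ref{r-mixing}, \ref{coboundChar} and Proposition~\ref{tclPol0}), and what remains here is assembly.
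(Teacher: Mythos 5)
Your proposal is correct and follows essentially the same route as the paper: part 1 is the proof of Theorem~\ref{tclPol0A} with Theorem~\ref{ContabsConv} replacing Theorem~\ref{condAbsConv} to supply the uniform bound $\sigma(f-f_s)\leq C\|f-f_s\|_2$, and part 2 is a direct citation of Theorem~\ref{coboundChar}. The only difference is that you spell out details the paper leaves implicit (notably that $f-f_s$ inherits (\ref{ineq5.23b2}) uniformly in $s$ -- a fact already built into the statement of Theorem~\ref{ContabsConv}, which allows arbitrary subsets $\Cal N$), so nothing of substance diverges.
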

\begin{proof} 1) The proof of the first statement is like
the proof of Theorem \ref{tclPol0A}. Here we use the inequality
$\sigma(f - f_s) \leq C \|f-f_s\|_2$, where the constant $C$ does
not depend on $s$, given by Theorem \ref{ContabsConv}.

2) The second assertion follows from Theorem  \ref{coboundChar}.
\end{proof}

\vskip 5mm \subsection {\bf Appendix: examples of $\Z^d$-actions by
automorphisms}

The aim of this appendix is to give explicit examples of commuting
matrices generating totally ergodic $\Z^d$-actions on tori. We
mainly recall some known facts. (See in particular \cite{KKS02} and
\cite{DaKa10} for the construction of $\Z^d$-actions by
automorphisms on the torus.)

The construction of  $\Z^d$-action by automorphisms on $\T^\rho$ is
linked to the groups of units in number fields. Following
\cite{KKS02}, let us recall some facts.

Let $M \in GL(\rho, \Z)$ be a matrix with an irreducible
characteristic polynomial $P=P(M)$ and hence distinct eigenvalues.
The centralizer of $M$ in $\Cal M(n, \Q)$ can be identified with the
ring of all polynomials in $M$ with rational coefficients modulo the
principal ideal generated by the polynomial $P(M)$, and hence with
the field $K = \Q(\lambda)$, where $\lambda$ is an eigenvalue of
$M$, by the map $\gamma : p(A) \to p(\gamma)$ with $p \in \Q[x]$.

By Dirichlet's theorem, if $P$ has $d_1$ real roots and $d_2$ pairs
of complex conjugate roots, then there are $d_1 + d_2-1$ fundamental
units in the group of units in the ring of integers in $K(P)$. This
provides a totally ergodic $\Z^{d_1+d_2 -1}$-action by automorphisms
on $\T^\rho$.

Explicit computation of examples relies on an algorithm (see
\cite{Co93}). The first computed examples appeared with the
development of the computers. Even nowadays computations are limited
to low dimensional examples.

\vskip 3mm {\bf Examples for $\T^3$}

To give a concrete example for $\T^3$, we explicit a pair $A, B$ of
matrices in $SL(3, \Z)$ such that $\{A, B\}$ generates a free action
in $\Z^2$.

We start with a integer polynomial $P(X) = -X^3 +qX +n$ which is
irreducible over $\Q$ and its companion matrix:
\begin{eqnarray*} M = \left(
\begin{matrix} 0 & 1 & 0 \cr 0 & 0 & 1 \cr n & q & 0 \cr
\end{matrix}
\right).
\end{eqnarray*}

Let $K(P)$ denote the number field associated to $P$. Suppose that
$K(P)$ belongs to any of the tables where the characteristics of the
first cubic real fields $K(P)$ are listed. Let $\theta$ be a root of
$P$. The table gives a pair of fundamental units for the group of
units in the ring of integers in $K(P)$ of the form $P_1(\theta)$,
$P_2(\theta)$, where $P_1$, $P_2$ are two integer polynomials. Then
the matrices $A_1= P_1(M)$ and $A_2= P_2(M)$ give a system of
generators of the group of matrices in $GL(3, \Z)$ commuting with
$M$, generating a totally ergodic $\Z^2$-action on $\T^3$ by
automorphisms.

\vskip 3mm {\it Explicit examples}

1) (from the table in \cite{SteRud76}) Let us consider the
polynomial $P(X) = X^3 -12X -10$ and its companion matrix
\begin{eqnarray*} M = \left(
\begin{matrix} 0 & 1 & 0 \cr 0 & 0 & 1 \cr 10 & 12 & 0 \cr
\end{matrix}
\right).
\end{eqnarray*}

Let $\theta$ be a root of $P$. The table gives the pair of
fundamental units for the algebraic group associated to $P$:
$$P_1(\theta) = \theta^2 -3\theta -3, \ P_2(\theta) = -\theta^2
+\theta +11.$$

The matrices $A_1= P_1(M)$ and $A_2= P_2(M)$ give a system of
generators of the group of matrices in $GL(3, \Z)$ commuting with
$M$. They generate a totally ergodic action of $\Z^2$ by
automorphisms on $\T^3$. They have 3 real eigenvalues and $\det(A_1)
= 1, \det(A_2) = -1$.
\begin{eqnarray*} A_1 = \left(
\begin{matrix} -3 & -3 & 1 \cr 10 & 9 & -3 \cr -30 & -26 & 9 \cr
\end{matrix}
\right), \ \ A_2 = \left(
\begin{matrix} 11 & 1 & -1 \cr -10 & -1 & 1 \cr 10 & 2 & -1 \cr
\end{matrix}
\right).
\end{eqnarray*}

\vskip 3mm 2) (from tables in \cite{SteRud76} and in \cite{Co93})
Let us consider now the polynomial $P(X) = X^3 -9X -2$ and its
companion matrix
\begin{eqnarray*} M = \left(
\begin{matrix} 0 & 1 & 0 \cr 0 & 0 & 1 \cr 2 & 9 & 0 \cr
\end{matrix}
\right).
\end{eqnarray*}

Let $\theta$ be a root of $P$. The table gives the pair of
fundamental units for the algebraic group associated to $P$:
$$P_1(\theta) = 3\theta^2 -9\theta -1, \ P_2(\theta) = 2 \theta^2
- 4 \theta - 1.$$

The matrices $A_1= P_1(M)$ and $A_2= P_2(M)$ give a system of
generators of the group of matrices in $GL(3, \Z)$ commuting with
$M$. They generate a totally ergodic action of $\Z^2$ by
automorphisms on $\T^3$. They have 3 real eigenvalues and $\det(A_1)
= 1, \det(A_2) = -1$.

\begin{eqnarray*} A_1 = \left(
\begin{matrix} -3 & -3 & 1 \cr 10 & 9 & -3 \cr -30 & -26 & 9 \cr
\end{matrix}
\right), \ \ A_2 = \left(
\begin{matrix} 11 & 1 & -1 \cr -10 & -1 & 1 \cr 10 & 2 & -1 \cr
\end{matrix}
\right).
\end{eqnarray*}

Remark that in \cite{SteRud76} a different set of generators is
given. The polynomials are
$$P_1'(\theta) = 85\theta^2 - 245\theta -59, \ P_2'(\theta) = - 18 \theta^2
+ 4 \theta + 161.$$

The matrices $A_1'= P_1'(M)$ and $A_2'= P_2'(M)$ give another pair
of generators of the group of matrices in $GL(3, \Z)$ commuting with
$M$. The relations between the two pairs are:
$$ A_1' = A_1 A_2, \ A_2' = A_1^{-1}.$$

\vskip 3mm {\bf A simple example on $\T^4$}

If $P(X) = X^4 + a X^3 + b X^2 + a X + 1$, the polynomial $P$ has
two real roots: $\lambda_0, \lambda_0^{-1}$ and two complex
conjugate roots of modulus 1: $\lambda_1, \overline \lambda_1$. Let
$\sigma_j= \lambda_j  + \overline \lambda_j$, $j=0,1$. They are
roots of $Z^2 -a Z +b-2 =0$.

Under the conditions: $a^2 -4b+8 > 0$,  $a > 4$, $b > 2$, $2a >
b+2$, i.e., (since $2a-2 \leq \frac14a^2 +2$) $$2 < b < \frac14a^2
+2, \ a > 4,$$ $\lambda_0, \lambda_0^{-1}$ are solution of
$\lambda^2 - \sigma_0 \lambda +1 = 0$, and $\lambda_1, \overline
\lambda_1$ are solutions of $\lambda^2 - \sigma_1 \lambda +1 = 0$,
where
$$\sigma_0 = -\frac12 a - \frac12 \sqrt{a^2 -4b +8},
\ \sigma_1 = -\frac12 a + \frac12 \sqrt{a^2 -4b +8}.$$

The polynomial $P$ is not factorizable over $\Q$. Indeed, suppose
that $P = P_1 P_2$ with $P_1$, $P_2$ with rational coefficients and
degree $\geq 1$. Since the roots of $P$ are irrational, the degrees
of $P_1$ and $P_2$ are 2. Necessarily their roots are, say,
$\lambda_1, \overline \lambda_1$ for $P_1$, $\lambda_0,
\lambda_0^{-1}$ for $P_2$. The sum $\lambda_1 + \overline
\lambda_1$, root of $Z^2 -a Z +b-2 =0$, is not rational and the
coefficients of $P_1$ are not rational. Let
\begin{eqnarray*} A :=
\left( \begin{matrix} 0 & 1 & 0 & 0 \cr 0 & 0 & 1 & 0 \cr 0 & 0 & 0
& 1 \cr -1 & -a & -b & -a \cr
\end{matrix}
\right), B = A+I.
\end{eqnarray*}
From the irreducibility over $\Q$, it follows that, if there is a
non zero fixed integral vector for $A^k B^{\ell}$, where $k, \ell$
are in $\Z$, then we have $A^k B^{\ell} = Id$. This implies:
$\lambda_1^k \, (\lambda_1 -1)^k = 1$, hence, since we have
$|\lambda_1| = 1$, it follows $|\lambda_1 -1| = 1$ which clearly is
not true.

\vskip 3mm {\it Example}: $P(X) = X^4 + 5X^3 +7 X^2 + 5X +1$. If $A$
is the companion matrix, then $A$ and $A+1$, with characteristic
polynomials $P(X)$ and $X^4 + X^3 - 2 X^2 + 2 X - 1$ respectively,
generate a $\Z^2$-totally ergodic action on $\T^4$.

\begin{eqnarray*}
A = \left( \begin{matrix} 0 & 1 & 0 & 0 \cr 0 & 0 & 1 & 0 \cr 0 & 0
& 0 & 1 \cr -1 & -5 & -7 & -5 \cr
\end{matrix}
\right), \ B= A+I =\left( \begin{matrix}  1 & 1 & 0 & 0 \cr 0 & 1& 1
& 0 \cr 0 & 0 & 1 & 1 \cr -1 & -5 & -7 & -4 \cr
\end{matrix}
\right).
\end{eqnarray*}

This elementary example gives only a $\Z^2$-action on $\T^4$. A
question is to produce an example with full dimension 3.

\vskip 3mm 3) {\bf Construction by blocks} \ Let $M_1, M_2$ be two
ergodic matrices respectively of dimension $d_1$ and $d_2$. Let
$p_i, q_i$, $i=1,2$ be two pairs of integers such that $p_1q_2 - p_2
q_1 \not = 0$. On the torus $\T^{d_1+d_2}$ we obtain a
$\Z^2$-totally ergodic action by taking $A_1, A_2$ of the following
form:
\begin{eqnarray*}
A_1 = \left( \begin{matrix} M_1^{p_1} & 0 \\ 0 & M_2^{q_1} \cr
\end{matrix}
\right), \ A_2 = \left( \begin{matrix} M_1^{p_2} & 0 \\ 0 &
M_2^{q_2} \cr
\end{matrix}
\right).
\end{eqnarray*}

Indeed, if there exists $v = \left( \begin{matrix} v_1 \\ v_2\cr
\end{matrix} \right) \in \Z^{d_1 + d_2} \setminus \{0\}$ invariant by $A_1^n
A_2^\ell$, then $M_1^{np_1 + \ell p_2} v_1 = v_1, \ M_2^{nq_1 + \ell
q_2} v_2 = v_2$, which implies $np_1 + \ell p_2 = 0$, $nq_1 + \ell
q_2 = 0$; hence $n = \ell = 0$.

\vskip 3mm This is a method to obtain explicit free $\Z^2$-actions
on $\T^4$. The same method gives explicit free $\Z^3$-actions on
$\T^5$ (by using a $\Z$-action on $\T^2$ and a $\Z^2$-action on
$\T^3$).

We do not know explicit examples of full dimension, i.e., with 3
independent generators on $\T^4$, or with 4 independent generators
on $\T^5$.

\vskip 3mm
{\bf Acknowlegements} This research was carried out during visits of
the first author to the University of Rennes 1 and of the second
author to the Center for Advanced Studies in Mathematics at Ben
Gurion University. The authors are grateful to their hosts for their
support. They thank Y. Guivarc'h, S. Le Borgne and M. Lin for
helpful discussions.

\vskip 20mm


\begin{thebibliography}{123}

\bibitem[1]{BeGu11}
B. Bekka, Y. Guivarc'h: {\it On the spectral theory of groups of
affine transformations on compact nilmanifolds}, arXiv{1106.2623}.

\vskip 2mm
\bibitem[2] {Bill99}
 P. Billingsley: {\it Convergence of probability measures},
 Second edition. John Wiley \& Sons, New York, 1999.

\vskip 2mm
\bibitem [3] {BlMo71}
P. E. Blanksby, H. L. Montgomery: Algebraic integers near the unit
circle, Acta Arith. 18 (1971), 355-369.

\vskip 2mm
\bibitem [4] {CohCo12}
G. Cohen, J.-P. Conze: The CLT for rotated ergodic sums and related
processes, Disc. Cont. Dynam. Syst. 33, no 9 (2013).

\vskip 2mm
\bibitem [5] {Co93}
H. Cohen: A course in computational algebraic number theory.
Graduate Texts in Mathematics, 138. Springer-Verlag, Berlin, 1993.

\vskip 2mm
\bibitem [6] {DaKa10}
D. Damjanovi\'c, A. Katok: Local rigidity of partially hyperbolic
actions I. KAM method and $\Z^k$-actions on the torus, Annals of
Math, 2010.

\vskip 2mm
\bibitem[7]{Ev84} J.H. Evertse, On equations in
$S$-units and the Thue-Mahler equation, Invent. Math. {\bf 75}
(1984), no. 3, 561-584.

\vskip 2mm
\bibitem [8] {EvScSch02}
J.-H. Evertse, H. P. Schlickewei and W. M. Schmidt: Linear equations
in variables which lie in a multiplicative group, Annals of
Mathematics, 155 (2002), 807-836.

\vskip 2mm
\bibitem [9] {FreSho31}
M. Fr\'echet, J. Shohat: A proof of the generalized second limit
theorem in the theory of probability, Trans. Amer. Math. Soc. 33
(1931), no. 2, 533-543.

\vskip 2mm
\bibitem[10]{FuPe01}
K. Fukuyama, B. Petit: Le th\'eor\`eme limite central pour les
suites de R. C. Baker. (French) [Central limit theorem for the
sequences of R. C. Baker] Ergodic Theory Dynam. Systems 21 (2001),
no. 2, p. 479-492.

\vskip 2mm
\bibitem[11]{FuSh99}
A. Furman, Ye. Shalom: Sharp ergodic theorems for group actions and
strong ergodicity, Ergodic Theory Dynam. Systems 19 (1999), no. 4,
p. 1037-1061.

\vskip 2mm
\bibitem[12]{GneKol54}
B. V. Gnedenko, A. N. Kolmogorov: Limit distributions for sums of
independent random variables. Translated and annotated by K. L.
Chung. With an Appendix by J. L. Doob. Addison-Wesley Publishing
Company, Inc., Cambridge, Mass., 1954. ix+264 pp.

\vskip 2mm
\bibitem[13]{KKS02}
A. Katok, S. Katok, K. Schmidt: Rigidity of measurable structure for
$Z^d$-actions by automorphisms of a torus. Comment. Math. Helv. 77
(2002), no. 4, 718-745.

\vskip 2mm
\bibitem[14]{Ka71}
Y. Katznelson: Ergodic automorphisms of $\T^n$ are Bernoulli shifts.
Israel J. Math. 10 (1971), 186-195.

\vskip 2mm
\bibitem[15]{Le78}
F. Ledrappier: Un champ markovien peut \^etre d'entropie nulle et
m\'elangeant (French), C. R. Acad. Sci. Paris S\'er. A-B 287 (1978),
no. 7, A561-A563.

\vskip 2mm
\bibitem[16]{Leo60a}
V. P. Leonov: The use of the characteristic functional and
semi-invariants in the ergodic theory of stationary processes, Dokl.
Akad. Nauk SSSR 133 523-526 (Russian); translated as Soviet Math.
Dokl. 1 (1960) 878-881.

\vskip 2mm
\bibitem[17]{Leo60b}
V. P. Leonov: On the central limit theorem for ergodic endomorphisms
of compact commutative groups, (Russian) Dokl. Akad. Nauk SSSR 135
(1960) 258-261.

\vskip 2mm
\bibitem[18]{Leo60c}
V. P. Leonov: Some applications of higher semi-invariants to the
theory of stationary random processes, Izdat. ``Nauka'', Moscow
1964, 67 pp. (Russian).

\vskip 2mm
\bibitem[19]{Lev13}
M. Levine: Central limit theorem for $\Z_+^d$-actions by toral
endomorphisms, Electron. J. Probability, 18 (2013), no. 35, p.1-42.

\vskip 2mm
\bibitem[20]{Ph94}
W. Philipp: Empirical distribution functions and strong
approximation theorems for dependent random variables. A problem of
Baker in probabilistic number theory. Trans. Amer. Math. Soc. 345
(1994), no. 2, 705-727.

\vskip 2mm
\bibitem[21] {Roh61}
V. A. Rohlin: The entropy of an automorphism of a compact
commutative group. (Russian) Teor. Verojatnost. i Primenen. 6 (1961)
351-352.

\vskip 2mm
\bibitem[22] {Schl90}
H.P. Schlickewei: S-unit equations over number fields, Invent. Math.
102, 95-107 (1990).

\vskip 2mm
\bibitem[23] {Sch90}
K. Schmidt: Automorphisms of compact abelian groups and affine
varieties. Proc. London Math. Soc. (3) 61 (1990), no. 3, 480-496.

\vskip 2mm
\bibitem[24] {SchWar93}
K. Schmidt, T. Ward: Mixing automorphisms of compact groups and a
theorem of Schlickewei, Invent. Math. 111 (1993), no. 1, 69-76.

\vskip 2mm
\bibitem[25]{Sc95}
K. Schmidt: Dynamical systems of algebraic origin. Progress in
Mathematics, 128. Birkh\"auser Verlag, Basel, 1995.

\vskip 2mm
\bibitem[26]{SteRud76}
R. Steiner, R. Rudman: On an algorithm of Billevich for finding
units in algebraic fields, Math. Comp. 30 (1976), no. 135, 598-609.

\vskip 2mm
\bibitem[27]{Zyg}
A. Zygmund: {\it Trigonometric series}, Second edition, Cambridge
University Press, London-New York 1968.
\end{thebibliography}
\end{document}